\documentclass[11pt,reqno]{amsart}
\usepackage[T1]{fontenc}
\usepackage{lmodern}
\usepackage{amsmath,amssymb,mathtools,mathrsfs}
\usepackage{microtype}
\usepackage{needspace}
\usepackage{tikz-cd}
\usepackage{xurl}
\usepackage[colorlinks=true,linkcolor=blue,citecolor=blue,urlcolor=blue]{hyperref}
\numberwithin{equation}{section}
\allowdisplaybreaks
\newtheorem{theorem}{Theorem}[section]
\newtheorem{proposition}[theorem]{Proposition}
\newtheorem{lemma}[theorem]{Lemma}
\newtheorem{corollary}[theorem]{Corollary}
\newtheorem{problem}[theorem]{Problem}
\newtheorem{maintheorem}{Theorem}

\theoremstyle{definition}
\newtheorem{definition}[theorem]{Definition}
\theoremstyle{remark}
\newtheorem{remark}[theorem]{Remark}
\DeclareMathOperator{\Vol}{Vol}

\DeclareMathOperator{\HSC}{HSC}

\title[Bisectional curvature of relative K\"ahler fibrations]
{Non-strict negativity of holomorphic bisectional curvature
for compact relative K\"ahler fibrations}
\author{Xueyuan Wan}
\address{Mathematical Science Research Center, Chongqing University of Technology,
Chongqing 400054, China}
\email{xwan@cqut.edu.cn}
\date{}

\makeatletter
\@namedef{subjclassname@2020}{\textup{2020} Mathematics Subject Classification}
\makeatother
\subjclass[2020]{32Q05, 32G05, 53C55.}
\keywords{Relative K\"ahler fibration, holomorphic bisectional curvature,
Kodaira--Spencer map,
Monge--Amp\`ere fibration, generalized Weil--Petersson metric, Shimura curve}
\hypersetup{
 pdftitle={Non-strict negativity of holomorphic bisectional curvature
for compact relative K\"ahler fibrations},
 pdfauthor={Xueyuan Wan and Xu Wang},
 pdfsubject={Complex differential geometry and relative K\"ahler fibrations}}

\begin{document}

\begin{abstract}

Strict negativity of holomorphic bisectional curvature need not pass from the base and fibers of a compact holomorphic fibration to its total space, even when the Kodaira-Spencer map is everywhere injective. We construct a compact relative K\"ahler fibration over a genus-two
curve, with a smooth projective threefold as total space and an everywhere injective
Kodaira--Spencer map, whose induced fiber metrics have strictly
negative holomorphic bisectional curvature, whereas the total space
admits no K\"ahler metric with this curvature property. This gives a negative answer to an open problem posed by To and Yeung. We also construct compact, effectively parametrized Monge-Amp\`ere fibrations from universal quaternionic abelian surfaces over Shimura curves. For products of these families, the generalized Weil-Petersson metric has nonpositive holomorphic bisectional curvature and strictly negative holomorphic sectional curvature, but its mixed bisectional curvatures vanish. Thus strict bisectional negativity can fail both for the existence of a metric on the total space and for the natural metric on the parameter space, despite effective variation at every point.
\end{abstract}
\maketitle

\section*{Introduction}

Negative curvature is an important theme in complex geometry. Alongside the study of its geometric and analytic consequences, a basic problem is to construct K\"ahler metrics satisfying a prescribed negativity condition. Holomorphic sectional curvature and holomorphic bisectional curvature are particularly natural in this setting, although the strict negativity of the latter is a substantially stronger requirement. Holomorphic fibrations provide a useful setting for studying these conditions: the geometry of the total space reflects both the intrinsic geometry of the fibers and the variation of their complex structures over the base.

Throughout this paper, a \emph{holomorphic fibration} is a proper
surjective holomorphic submersion with connected fibers. It is called
\emph{compact} when its total space and base are compact connected
complex manifolds. A \emph{relative K\"ahler form} for a fibration
$p:X\to B$ is a smooth real $d$-closed $(1,1)$-form $\omega$ on $X$
whose restriction
\(
 \omega_b:=\omega|_{X_b},\,X_b=p^{-1}(b),
\)
is positive definite on every fiber. We then call
$p:(X,\omega)\to B$ a \emph{relative K\"ahler fibration}; the form
$\omega$ need not be positive definite on the whole total space.
The closedness of $\omega$ imposes a compatibility condition on
the fiber metrics and is essential in constructing K\"ahler
metrics on $X$. For a compact family over a K\"ahler base
$(B,\omega_B)$, the form $\omega+k\,p^*\omega_B$ is K\"ahler on
$X$ for all sufficiently large $k$.

For compact surfaces admitting a Kodaira fibration, several results
illustrate the role of variation in obtaining strict negativity.
Cheung \cite{Che89} constructed K\"ahler metrics of negative
holomorphic sectional curvature, while Tsai \cite{Tsa89} constructed
Hermitian metrics of negative holomorphic bisectional curvature.
To and Yeung \cite[Theorem~1]{TY11} proved that every Kodaira
surface, in their sense, admits a K\"ahler metric of strictly
negative holomorphic bisectional curvature. Here the
Kodaira--Spencer map is required to be injective at every point of
the base. Their construction associates to each point of the total
space the corresponding fiber marked at that point. The resulting
map to the moduli space of pointed curves lifts locally to
holomorphic immersions into Teichm\"uller space, and the desired
metric is induced by the Weil--Petersson metric.

In \cite[Remark~1]{TY11}, To and Yeung observed that the same
argument applies to effectively parametrized families of compact
curves of genus at least two over compact complex bases of
arbitrary dimension. The total space again admits a K\"ahler
metric of strictly negative holomorphic bisectional curvature.
On the other hand,
Wan \cite[Corollary~1.4]{Wan26} gave another proof by a direct
metric construction. The relative form induced by the fiberwise
hyperbolic metrics yields metrics of the form
$\omega+k\,p^*\omega_B$, $k\gg1$. More generally,
\cite[Theorem~1.2]{Wan26} gives strict bisectional negativity on
the total space when the relative form induces a Griffiths
negative metric on $T_{X/B}$ and the base has strictly negative
holomorphic bisectional curvature. For holomorphic sectional
curvature, the corresponding result requires only strict
negativity of the induced fiber metrics and of the base metric
\cite[Theorem~1.6]{Wan26}. The distinction is that Griffiths
negativity of $T_{X/B}$ tests curvature in every direction on
$X$, including directions transverse to the fibers.

For higher-dimensional fibers, To and Yeung posed the following
problem in \cite[Remark~2(i)]{TY11}. We reproduce their formulation
verbatim:
\begin{problem}[To-Yeung]\label{problem1}
\emph{In general given a non-trivial fibration for which fibers and base
are all equipped with K\"ahler metrics of negative holomorphic
bisectional curvature, one may ask whether the total space of the
fibration admits a K\"ahler metric with a similar curvature property.
Theorem 1 provides an affirmative example to such a problem.}
\end{problem}
The theorem mentioned in the quotation is their theorem for Kodaira
surfaces. We give a negative answer within the class of compact
relative K\"ahler fibrations. In our example, the negatively
curved fiber metrics are the restrictions of a single relative
K\"ahler form, and the Kodaira--Spencer map is everywhere injective.

For a fibration $p:X\to B$, write
\(
 \rho_{p,b}:T_bB\to H^1(X_b,T_{X_b})
\)
for the classical Kodaira--Spencer map. The family is
\emph{effectively parametrized} if $\rho_{p,b}$ is injective for
every $b\in B$. We use \emph{strictly negative holomorphic
bisectional curvature} to mean that
$R(u,\bar u,v,\bar v)<0$ for every pair of nonzero complex
tangent vectors at the same point. Effectivity is necessary for
such a metric to exist on the total space of a fibration with
positive-dimensional compact fibers; see
Lemma~\ref{ce335:lem:negative-vanishing} and \cite{MR436056}. We can give 
a negative answer to Problem \ref{problem1}, and obtain

\begin{maintheorem}\label{ce335:thm:main}
There exist a smooth connected projective threefold $X$, a smooth
projective curve $C$ of genus two, and a compact relative K\"ahler
fibration
\[
 \pi:(X,\omega)\longrightarrow C
\]
whose underlying map is a smooth surjective projective morphism
with connected surface fibers, such that:
\begin{enumerate}
\renewcommand{\labelenumi}{\textup{(\roman{enumi})}}
\item the induced K\"ahler metric $\omega_c:=\omega|_{X_c}$ has
strictly negative holomorphic bisectional curvature for every
$c\in C$, and $C$ admits a hyperbolic K\"ahler metric;
\item the Kodaira--Spencer map of $\pi$ is injective at every point;
\item $X$ admits no K\"ahler metric of strictly negative
holomorphic bisectional curvature.
\end{enumerate}
The relative form $\omega$ can be chosen to be K\"ahler on $X$.
Moreover, $X$ admits a K\"ahler metric of strictly negative
holomorphic sectional curvature.

More precisely, $X$ contains a smooth connected curve $G$ of genus
$14$, with $\deg(\pi|_G)=9$, and holomorphic subbundle inclusions
\[
 \mathcal O_G\hookrightarrow T_{X/C}|_G\hookrightarrow T_X|_G.
\]
In particular, the cotangent bundle $\Omega_X^1$ is not ample.
\end{maintheorem}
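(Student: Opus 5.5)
The plan is to reduce (iii) to a curvature subharmonicity argument along the curve $G$, and to obtain (i), (ii) and the K\"ahler/sectional statements from a two-step tower of curve fibrations.

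\textbf{Construction as a tower.} I would build $X$ as a tower $X\xrightarrow{q}S\xrightarrow{r}C$ with $\pi=r\circ q$. Here $r:S\to C$ is a Kodaira-type family of curves over the genus-two curve $C$. The map $q:X\to S$ is a family of curves of genus at least two, obtained by an Atiyah--Kodaira-style cyclic branched cover. Its branch divisor inside $S\times_C(\text{a second curve family})$ would be a union of graphs chosen to vary with $c\in C$. Each fiber $X_c$ is then itself a Kodaira surface $X_c\to S_c$.

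\textbf{The relative form and (i), (ii).}
\begin{itemize}
\item I would take $\omega=\omega_{X/S}+k\,q^*\omega_{S/C}+k^2\pi^*\omega_C$. Here $\omega_{X/S}$ and $\omega_{S/C}$ are the relative forms induced by the fiberwise hyperbolic metrics, which are $d$-closed, and $k\gg1$.
\item On each $X_c$, this restricts to a metric of the type in \cite[Theorem~1.2]{Wan26}, over the hyperbolic curve $S_c$. Griffiths negativity of $T_{X_c/S_c}$ for the hyperbolic relative form of an effectively parametrized curve family is the input used in \cite[Corollary~1.4]{Wan26}, so (i) follows.
\item The same form is K\"ahler on $X$ for $k$ large.
\item Strictly negative holomorphic sectional curvature on $X$ follows from \cite[Theorem~1.6]{Wan26}, applied twice along the tower. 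This needs only negativity of fiber and base metrics, both hyperbolic here.
\item For (ii), I would check that $\rho_{\pi,c}\neq0$ by showing that the moving branch data makes the induced map $C\to\{\text{moduli of the surfaces}\}$ nonconstant with nowhere-vanishing differential. The standard approach is to identify $\rho_{\pi,c}$ with the class in $H^1(X_c,T_{X_c})$ obtained from the deformation of the branch locus, and to show that this class is nonzero by a residue/pairing computation against a holomorphic $2$-form.
\end{itemize}

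\textbf{Reduction of (iii) to $G$.} Suppose $h$ is a K\"ahler metric on $X$ with strictly negative bisectional curvature. Let $s$ be the nowhere-vanishing section of $T_X|_G$ coming from $\mathcal O_G\hookrightarrow T_X|_G$, and let $u$ be a local tangent vector to $G$. The standard computation gives
\[
\sqrt{-1}\,\partial\bar\partial\log|s|_h^2\ \ge\ -\frac{R(s,\bar s,u,\bar u)}{|s|^2}\,\sqrt{-1}\,du\wedge d\bar u\ >\ 0
\]
on $G$. So $\log|s|^2$ is strictly subharmonic on a compact curve, a contradiction. This is the content of Lemma~\ref{ce335:lem:negative-vanishing} applied to the inclusion into $T_X|_G$. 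The same trivial subbundle gives a quotient $\Omega^1_X|_G\to\mathcal O_G$, so $\Omega_X^1$ is not ample.

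\textbf{Main obstacle: producing $G$.} The hard step is to find, inside this tower, a smooth curve $G$ of genus $14$ with $\deg(\pi|_G)=9$, together with a nowhere-vanishing vertical vector field along it. My first attempt is to look for a curve along which the family $q$ or $r$ is locally isotrivial in one fiber direction. For example, I would take $G$ to be a component of the preimage of a branch-type correspondence over which the cyclic cover splits, so that the relative tangent directions of two sheets differ by a global flat frame. Then $T_{X/C}|_G$ contains a copy of $\mathcal O_G$, given by a section coming from a fiberwise automorphism or translation that is constant along $G$. I would compute the genus by Riemann--Hurwitz for the degree-$9$ map $G\to C$: with $g(C)=2$, this requires ramification totalling $2\cdot14-2-9\cdot2=8$. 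Arranging the covering data so that this count, smoothness of $G$, and the triviality of the subbundle hold simultaneously is where I expect most of the work to lie.
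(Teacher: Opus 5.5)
There is a genuine gap, and it is exactly where the content of the theorem lies. For (iii) you give only the Bochner step of Lemma~\ref{ce335:lem:negative-vanishing}, which is routine once a curve $G$ with $\mathcal O_G\hookrightarrow T_X|_G$ exists. You never produce $G$, and the heuristics you offer cannot produce it. Curves of genus $\ge2$ have no holomorphic vector fields. Neither does any $X_c$ carrying a metric with $\operatorname{HBC}<0$ (Lemma~\ref{ce335:lem:negative-vanishing} with $Z=M=X_c$). So there is no fiberwise automorphism or translation to restrict to $G$, and a splitting of a cyclic cover over a correspondence does not by itself give a section of $T_X|_G$.

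More seriously, the tower you describe works against (iii). If $r:S\to C$ is a Kodaira fibration, then $S$ carries a K\"ahler metric with $\operatorname{HBC}<0$ by Theorem~\ref{ce335:thm:curve-input}. An Atiyah--Kodaira cover $q:X\to S$ with moving branch graphs is designed to be effectively parametrized. Along $S_c$ this comes from the motion of the branch points. Transversally to $S_c$ it is forced by Lemma~\ref{ce335:lem:trace-KS} whenever the auxiliary curve family over $C$ is effective. Theorem~\ref{ce335:thm:curve-input} applied to $q$ would then give $X$ exactly the metric that (iii) excludes. So the construction must deliberately build in a failure of Kodaira--Spencer injectivity for some inner curve family, and your proposal has no mechanism for this.

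Your route to (i) through \cite[Theorem~1.2]{Wan26} needs every $X_c\to S_c$ to be effective, so any such failure must be transverse to the $r$-fibers. The obstruction it yields along a $q$-fiber has $\deg(\pi|_G)=0$ and a section that does not lie in $T_{X/C}$, so it is not the curve in the statement. A curve $G$ dominating $C$ with $\mathcal O_G\subset T_{X/C}|_G$ would need $T_{X/S}|_G\simeq\mathcal O_G$, because $T_{S/C}$ is negative. By semipositivity of the hyperbolic relative form, that in turn needs $q$ to be non-effective along the whole curve $q(G)$. Your outline gives no way to arrange this.

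The missing idea is a deliberately ramified change of parameter for a curve family. The surface fibration is arranged transversally to the curve fibration, not as a tower over $C$. The paper first builds a negatively curved threefold $Y$ from two single-branch triple covers. It carries both a curve fibration $p:Y\to T$ over a surface and a surface fibration $f:Y\to C$, with $(p,f)$ finite of degree nine. After an \'etale change $T_1\to T$ making $\mathcal O_{T_1}(D_1)$ a fifth power, it takes the cyclic cover $\sigma:\widehat T\to T_1$ branched along the negative curves $D_1$ and sets $X=Y_1\times_{T_1}\widehat T$. Pick $\widehat t$ on the ramification curve and $0\ne\xi\in\ker d\sigma_{\widehat t}$, and let $G=(p')^{-1}(\widehat t)\simeq(Y_1)_{\sigma(\widehat t)}$. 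Then $(0,\xi)$ is a nowhere-vanishing section of $T_X|_G$. It is killed by $dh$, hence vertical for $\pi=f_1\circ h$, while $\pi|_G$ is the composite of two triple covers and so has degree $9$. The same construction makes each $X_c$ a cyclic cover of the negatively curved surface $F_c$, branched along curves of negative self-intersection. So (i) comes from the local computation in Proposition~\ref{ce335:prop:cyclic-metric} and the potential gluing in Proposition~\ref{ce335:prop:relative-negative-form}, not from a Kodaira-fibration structure on $X_c$. Part (ii), which you only sketch, follows from $H^0(F_c,T_{Y_1}|_{F_c})=0$ together with Lemma~\ref{ce335:lem:trace-KS} applied to $h$, with no residue computation.
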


Theorem~\ref{ce335:thm:main} shows that the positive result for
families of curves can fail for families of surfaces, even when
the base is still a curve. This happens even though all the fiber
metrics are induced by a single K\"ahler form on $X$ and the
Kodaira--Spencer map is injective at every point. The obstruction
is the trivial holomorphic line subbundle
$\mathcal O_G\hookrightarrow T_X|_G$, which prevents $X$ from
carrying any K\"ahler metric with strictly negative holomorphic
bisectional curvature. The curve $G$ maps onto the base and is
therefore not contained in any fiber. Thus this obstruction does
not contradict the strict negativity of the induced metrics on
the individual fibers. The existence of a
metric of strictly negative holomorphic sectional curvature
follows from \cite[Theorem~1.6]{Wan26}; thus the two curvature
conditions lead to different conclusions on the same total space.

Two successive families of triple covers, each ramified at one
moving point, first give a negatively curved projective threefold
$Y$ with a curve fibration $p:Y\to T$ over a surface and a surface
fibration $f:Y\to C$ over a genus-two curve. After a finite \'etale
change of $T$, we form a cyclic cover of $T$ of degree five and
pull it back to $Y$. On the fibers of $f$, this gives cyclic covers
branched along disjoint negative curves. A local curvature calculation gives negatively
curved K\"ahler metrics as global potential perturbations of one
fixed background form. Proposition~\ref{ce335:prop:relative-negative-form}
then assembles these potentials into a relative K\"ahler form. Viewed through $p$, the same
cover is a ramified change of parameter. A vector in the kernel
of its differential defines a trivial holomorphic tangent line
along a compact curve fiber. Finally, trace and functoriality of
Kodaira--Spencer classes preserve effectivity of the outer surface
family.

Our second theme concerns the metric induced on the base by a
special kind of relative K\"ahler form. A relative K\"ahler
fibration $p:(X,\omega)\to B$ of relative dimension $n$ is called
\emph{Monge--Amp\`ere} if
\(
 \omega^{n+1}=0
\)
on $X$. The relative form then has constant complex rank $n$. It
determines horizontal lifts and Kodaira--Spencer tensors whose
fiberwise $L^2$ pairing defines the generalized Weil--Petersson
metric introduced in \cite[Definition~1.12]{WanWang2023}.
For an effectively parametrized family, this is a positive-definite
K\"ahler metric with nonpositive holomorphic bisectional curvature
and strictly negative holomorphic sectional curvature
\cite[Theorem~2.4 and Corollary~2.6]{WanWang2023}.

The existence of compact Monge--Amp\`ere fibrations with everywhere
injective Kodaira--Spencer map is an interesting problem in its own
right. The Monge--Amp\`ere equation requires the relative form to
be degenerate in the horizontal directions, whereas injectivity
of the Kodaira--Spencer map requires nontrivial complex variation
in every direction on the base. Constructing a family that
satisfies both conditions globally, with compact total space and
base, also makes it possible to study the generalized
Weil--Petersson metric in a compact setting. We obtain such
relative K\"ahler fibrations from universal abelian surfaces over
quaternionic Shimura curves.
\begin{maintheorem}\label{thm:compact-Shimura-MA-example}
There exist a smooth projective curve $C$, a smooth projective
threefold $\mathcal A$, and a smooth projective morphism
\[
 \pi:\mathcal A\longrightarrow C
\]
with connected abelian-surface fibers, together with a smooth real
closed semipositive $(1,1)$-form $\omega_{\mathrm B}$ of constant
complex rank two, such that
\[
 \omega_{\mathrm B}|_{\mathcal A_c}>0,
 \qquad \omega_{\mathrm B}^3=0,
\]
and the Kodaira--Spencer map is injective at every point of $C$.
The family can be taken to be the universal quaternionic abelian
surface over a compact Shimura curve.
\end{maintheorem}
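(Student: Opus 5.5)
We will realize the family as the universal abelian surface with quaternionic multiplication over a compact Shimura curve, and take $\omega_{\mathrm B}$ to be the flat (Betti) form. Fix an indefinite quaternion division algebra $D$ over $\mathbb Q$ with a maximal order $\mathcal O_D$ and a principal polarization datum. Let $\Gamma\subset\mathcal O_D^1$ be a torsion-free congruence subgroup of sufficiently high level, so that the fine moduli problem is representable. Then $C=\Gamma\backslash\mathbb H$ is a smooth projective curve, compact because $D$ is a division algebra. The universal family $\pi:\mathcal A\to C$ is the quotient
\[
\mathcal A=\Gamma\ltimes\mathcal O_D\backslash(\mathbb H\times D_{\mathbb R}),
\]
where the complex structure on $D_{\mathbb R}\cong M_2(\mathbb R)\cong\mathbb C^2$ is given by the complex structure $J_\tau$ determined by $\tau\in\mathbb H$. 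Projectivity of $\mathcal A$ follows from the existence of the relatively ample polarization line bundle together with the projectivity of $C$, or from Shimura's construction directly. Torsion-freeness of $\Gamma$ gives smoothness, and connected fibers are automatic.

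Next we construct $\omega_{\mathrm B}$. On $\mathbb H\times D_{\mathbb R}$, write $x\in D_{\mathbb R}\cong\mathbb R^4$ for the real coordinate. Set $\omega_{\mathrm B}=\tfrac12 E(dx,dx)$, where $E$ is the $\mathcal O_D$-invariant, $\Gamma$-invariant alternating (polarization) form on $D$, viewed as a constant-coefficient closed real $2$-form in $dx$. Since $E$ and $dx$ are invariant under translation by $\mathcal O_D$, and $E$ is invariant under $\Gamma$ acting by left multiplication, this form descends to $\mathcal A$ and is $d$-closed. The key check is that it has type $(1,1)$ on $\mathcal A$. In local holomorphic coordinates $(\tau,z)$, the real coordinate $x$ is expressed as a real-linear function of $z,\bar z$ whose coefficients depend on $\tau$. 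Hence $dx$ contains $d\tau$ terms, and type $(1,1)$ must be verified using the Riemann relation $E(J_\tau\cdot,J_\tau\cdot)=E$, which holds for every $\tau$. The cleanest route is to check that $\omega_{\mathrm B}$ is locally $i\partial\bar\partial$ of the Betti potential $\tfrac12 H_\tau(x,x)$ restricted appropriately. Alternatively, one can note that $\omega_{\mathrm B}$ is the curvature form of the flat-connection (Betti) metric on the polarization line bundle, computed in the standard way for universal abelian schemes. Positivity on fibers is then just the positivity of the Riemann form $E(J_\tau\cdot,\cdot)$. Because $\omega_{\mathrm B}$ involves only the four $dx$ directions, its rank is at most $4$ real, i.e. $2$ complex. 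Since its restriction to fibers is already of complex rank $2$, the rank is exactly two and $\omega_{\mathrm B}^3=0$. Semipositivity follows from type $(1,1)$ together with the rank statement: on a complement of the kernel, which projects isomorphically onto fiber directions, the form equals the positive fiber form.

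Finally, injectivity of the Kodaira--Spencer map: since $\dim C=1$, it suffices to show $\rho_{\pi,c}\neq0$ at every point. The period map $\mathbb H\to\mathfrak H_2$ sending $\tau$ to the period matrix of $\mathcal A_\tau$ is a holomorphic embedding, since it is equivariant and nonconstant with image a Shimura curve. Its differential is the Kodaira--Spencer map composed with the injective natural map $H^1(T)\to\mathrm{Hom}(H^{1,0},H^{0,1})$. Hence $\rho_{\pi,c}$ is injective everywhere.

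The main obstacle is the type-$(1,1)$ verification of $\omega_{\mathrm B}$ in the presence of the $\tau$-dependent complex structure on the fibers. I would do this by an explicit local computation, writing $z=A(\tau)x$ with $A$ holomorphic in $\tau$.
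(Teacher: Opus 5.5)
Your construction, the choice of $\omega_{\mathrm B}$ as the polarization $E$ written as a constant $2$-form in flat real coordinates, and the plan for type $(1,1)$ all agree with the paper. The local computation you defer is exactly Lemma~\ref{lem:Betti-form-properties}, and it goes through as you suggest. From $z=A(\tau)x$ with $A$ holomorphic, the forms $A(\tau)\,dx=dz-A'(\tau)x\,d\tau$ are of type $(1,0)$; these are the paper's $\delta w=dw-(dZ)y$. Hence the $(2,0)$-part of $\omega_{\mathrm B}$, including its $d\tau$-components, is $E$ evaluated on the $J_\tau$-$(1,0)$ subspace, and this vanishes by the Riemann relation. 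Your observation that $\omega_{\mathrm B}$ is built from four real $1$-forms gives complex rank at most two and $\omega_{\mathrm B}^3=0$ slightly more directly than the paper does. Your semipositivity argument, using the vertical space as a complement of the kernel, is sound.

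The real difference is the Kodaira--Spencer step. The paper computes the Betti horizontal lift $V=\partial_\tau+\sum_\alpha s^{-1}\operatorname{Im}z_\alpha\,\partial_{z_\alpha}$ and its $\bar\partial$. This gives the parallel, hence harmonic, representative $-\frac{1}{2\sqrt{-1}s}\sum_\alpha d\bar z_\alpha\otimes\partial_{z_\alpha}$, which is visibly nonzero at every point. That argument is self-contained, and the same tensor is the $\omega_{\mathrm B}$-Kodaira--Spencer tensor that later enters $G_{\mathrm{WP}}$.

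Your period-map argument is closer to Remark~\ref{rem:comparison-Yuan} and can be made to work, but its stated justification is too weak. $\Gamma$-equivariance plus nonconstancy only shows the differential is nonzero off a discrete set, while the theorem needs it at every point. The fix is to use equivariance under the full group $D^1_{\mathbb R}\simeq\mathrm{SL}_2(\mathbb R)$. This group acts on $D_{\mathbb R}$ by $E$-preserving real-linear maps intertwining $J_\tau$ with $J_{g\tau}$, and it is transitive on $\mathbb H$, so the differential has constant rank. Alternatively, differentiate the period matrix explicitly.

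Also, you only need the differential of the period map to factor through $\rho_{\pi,c}$. Injectivity of $H^1(T)\to\operatorname{Hom}(H^{1,0},H^{0,1})$ plays no role.
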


The form $\omega_{\mathrm B}$ is the Betti form associated with the
principal polarization. It makes this family a compact relative
K\"ahler fibration whose induced fiber metrics are flat. Here the
curvature under consideration is that of the generalized
Weil--Petersson metric on the base. The Betti form has the following local expression in flat
symplectic coordinates:
\(
 \omega_{\mathrm B}=2\sum_{\alpha=1}^2 dx_\alpha\wedge dy_\alpha.
\)
The symplectic invariance of this expression gives a global closed
form, and the period-matrix description shows that it has type
$(1,1)$, is positive on the fibers, and has constant complex rank
two. We verify that the Kodaira--Spencer map is injective everywhere
by an explicit nonzero harmonic representative; this is also
consistent with Yuan's formula \cite[Theorem~4.2]{Yuan2024}.
The construction uses the classical quaternionic family and its
polarization; our purpose is to make the compact Monge--Amp\`ere
structure and its effective variation explicit.

These examples allow us to examine whether the nonpositivity of the
generalized Weil--Petersson metric can be strengthened to strict
bisectional negativity. For comparison, the classical
Weil--Petersson metric on Teichm\"uller space has strictly negative
holomorphic bisectional curvature
\cite[Theorem~4.5]{Wolpert1986}; the pointed case is treated in
\cite[Proposition~1]{TY11}. In the Monge--Amp\`ere setting, however,
the curvature estimate allows equality for orthogonal tangent
directions. We show that this equality occurs on compact bases even
when the Kodaira--Spencer map is everywhere injective.

The reason is a product formula. Products of Monge--Amp\`ere
fibrations are again Monge--Amp\`ere, and their generalized
Weil--Petersson metrics split as products, with constant positive
factors given by the fiber volumes. Consequently, tangent directions
coming from distinct factors have zero bisectional curvature.
Applying this construction to two families from
Theorem~\ref{thm:compact-Shimura-MA-example} gives the following result.

\begin{maintheorem}\label{thm:compact-product-counterexample}
There exist a smooth projective sixfold $\mathcal X$, a smooth
projective surface $B$, and an effectively parametrized
Monge--Amp\`ere fibration
\[
 p:(\mathcal X,\omega)\longrightarrow B
\]
with connected abelian-fourfold fibers such that its generalized
Weil--Petersson metric is K\"ahler, has nonpositive holomorphic
bisectional curvature and strictly negative holomorphic sectional
curvature, and has zero bisectional curvature on some pair of nonzero
tangent directions at every point of $B$.
More precisely, one can take $B=C_1\times C_2$ and
$\mathcal X=\mathcal A_1\times\mathcal A_2$, using two families from
Theorem~\ref{thm:compact-Shimura-MA-example}. If
$V_i=\int_{\mathcal A_{i,c_i}}\omega_{\mathrm B,i}^2/2!$, then
\[
 G_{\mathrm{WP}}=
 V_2\operatorname{pr}_1^*G_{\mathrm{WP},1}
 +V_1\operatorname{pr}_2^*G_{\mathrm{WP},2}.
\]
Every bisectional curvature between tangent directions in the two
different factors is zero.
\end{maintheorem}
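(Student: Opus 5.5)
We will derive Theorem~\ref{thm:compact-product-counterexample} from Theorem~\ref{thm:compact-Shimura-MA-example} together with a general product formula for Monge--Amp\`ere fibrations. Take two families $\pi_i:(\mathcal A_i,\omega_{\mathrm B,i})\to C_i$ as in Theorem~\ref{thm:compact-Shimura-MA-example}. Set $B=C_1\times C_2$, $\mathcal X=\mathcal A_1\times\mathcal A_2$, $p=\pi_1\times\pi_2$ and $\omega=\operatorname{pr}_1^*\omega_{\mathrm B,1}+\operatorname{pr}_2^*\omega_{\mathrm B,2}$.

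The first step is to check that this is an effectively parametrized Monge--Amp\`ere fibration. Clearly $p$ is a smooth projective morphism with fibers $\mathcal A_{1,c_1}\times\mathcal A_{2,c_2}$, which are abelian fourfolds. The form $\omega$ is closed, real, of type $(1,1)$, and positive on fibers. Since $\omega_{\mathrm B,i}^3=0$ on the threefold $\mathcal A_i$, the binomial expansion of $\omega^5$ only involves terms $\omega_{\mathrm B,1}^a\omega_{\mathrm B,2}^b$ with $a+b=5$, so $a\ge3$ or $b\ge3$. Each such term vanishes, hence $\omega^5=0$.

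For effectivity, the Kodaira--Spencer map of the product at $(c_1,c_2)$ is $\rho_1\oplus\rho_2$. It takes values in $H^1(F_1,T_{F_1})\oplus H^1(F_2,T_{F_2})$, which embeds into $H^1(F_1\times F_2,T_{F_1\times F_2})$ via the K\"unneth decomposition: $T_{F_1\times F_2}=\operatorname{pr}_1^*T_{F_1}\oplus\operatorname{pr}_2^*T_{F_2}$, and the $H^0(\mathcal O)$ factors make the pullbacks injective. Injectivity of $\rho_1$ and $\rho_2$ then gives injectivity of the product map.

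The second step, which is the main computation, is the product formula for $G_{\mathrm{WP}}$. We need the construction from \cite{WanWang2023}. The horizontal lift of $v\in T_{c_1}C_1$ with respect to $\omega$ is the $\omega$-orthogonal lift. Because $\omega$ splits, this lift is the $\mathcal A_1$-horizontal lift $v^h_1$ of $v$ (with zero $\mathcal A_2$-component). Hence the Kodaira--Spencer tensor $\bar\partial(v^h)|_{F}$ is $\operatorname{pr}_1^*$ of the tensor $\kappa_1(v)$ on $F_1$, and it has no $T_{F_2}$ part. The same holds with the indices exchanged.

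The fiberwise $L^2$ pairing is computed with $\omega_F^4/4!=\operatorname{pr}_1^*(\omega_1^2/2!)\wedge\operatorname{pr}_2^*(\omega_2^2/2!)$ and the product metric on $T_F$. This gives three facts. The pairing of two factor-1 tensors is $V_2$ times the $F_1$ pairing, by Fubini. Symmetrically, the pairing of two factor-2 tensors is $V_1$ times the $F_2$ pairing. Mixed pairings vanish, since the tensors take values in orthogonal summands $\operatorname{pr}_1^*T_{F_1}\perp\operatorname{pr}_2^*T_{F_2}$. This yields the stated formula. The constants $V_i$ are independent of $c_i$ because $\omega_{\mathrm B,i}$ is closed and the fibers are homologous. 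The only care needed is to match the normalization conventions of \cite[Definition~1.12]{WanWang2023}, and to confirm that the lift there is indeed the $\omega$-horizontal one.

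The final step is curvature. $G_{\mathrm{WP}}$ is a product of the K\"ahler metrics $V_2G_{\mathrm{WP},1}$ and $V_1G_{\mathrm{WP},2}$ on curves, so it is K\"ahler and its curvature tensor is the direct sum of the factor curvatures. For $u=\partial_{c_1}$ and $v=\partial_{c_2}$, $R(u,\bar u,v,\bar v)=0$, so every mixed bisectional curvature vanishes at every point.

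Nonpositivity of bisectional curvature and strict negativity of holomorphic sectional curvature follow from \cite[Theorem~2.4 and Corollary~2.6]{WanWang2023}, since the family is effectively parametrized. Alternatively, they can be read off directly. Each factor has negative Gaussian curvature by the same theorem applied to $\pi_i$. For $w=a\,\partial_1+b\,\partial_2$ one then gets $R(w,\bar w,w,\bar w)=|a|^4R_1+|b|^4R_2<0$ whenever $w\ne0$.
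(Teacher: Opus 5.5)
Your proposal is correct and follows the paper's proof essentially step by step. The shared steps are: the binomial argument for $\omega^5=0$; the split horizontal lift, which gives block-diagonal Kodaira--Spencer tensors; Fubini against the product volume form, which produces the factors $V_2$ and $V_1$; vanishing of mixed curvature for a product metric; and \cite[Theorem~2.4 and Corollary~2.6]{WanWang2023} for nonpositive bisectional and negative holomorphic sectional curvature. The only minor difference is that you obtain Kodaira--Spencer injectivity from the K\"unneth decomposition of $H^1(F_1\times F_2,T_{F_1\times F_2})$, whereas the paper restricts to slices $X_{1,t_1}\times\{x_2\}$; both arguments are valid.
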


Every nonzero tangent direction in
Theorem~\ref{thm:compact-product-counterexample} represents a
nontrivial deformation of the fiber. The zero mixed bisectional
curvatures arise because the two factors deform independently,
while the generalized Weil--Petersson metric remains positive
definite and has strictly negative holomorphic sectional curvature.
Thus effective variation and compactness do not imply strict
bisectional negativity for this metric. Together with
Theorem~\ref{ce335:thm:main}, this exhibits two distinct ways in
which strict negativity can fail for compact relative K\"ahler fibrations:
a global tangent-bundle obstruction on the total space, and zero
mixed curvature for the natural metric on the base.

The paper is organized as follows.
Section~\ref{ce335:sec:counterexample} proves
Theorem~\ref{ce335:thm:main}, including the metric lemma for cyclic
covers, the construction of the relative K\"ahler form, and the
verification of the tangent-bundle obstruction.
Section~\ref{sec:product-MA} establishes the product formula for
generalized Weil--Petersson metrics and its curvature consequences.
Section~\ref{sec:compact-Shimura} constructs the compact Shimura
families and proves
Theorems~\ref{thm:compact-Shimura-MA-example}
and~\ref{thm:compact-product-counterexample}.

\medskip
\noindent\textbf{Acknowledgments.}
Xueyuan Wan was supported by the National Key R\&D Program of China (Grant No.~2024YFA1013200) and the National Natural Science Foundation of China (Grant No.~12671100). The author used ChatGPT 5.6 and 6 Pro as auxiliary tools in this work.
The author completed and verified all mathematical arguments and
takes full responsibility for the content of this paper.\vspace{3mm}

\section{A counterexample among compact relative K\"ahler fibrations}
\label{ce335:sec:counterexample}

In this section, we prove Theorem~\ref{ce335:thm:main}. All varieties and morphisms in
this section are over $\mathbb C$. Two single-branch Hurwitz families
and a cyclic cover of degree five produce the underlying projective
fibration. We construct its relative K\"ahler form by gluing fiber
potentials, and then verify effectivity and the obstruction to
strict bisectional negativity on the total space. We begin with the
curvature and deformation facts used in these arguments.

\Needspace{6\baselineskip}
\subsection{Curvature and deformation preliminaries}
\label{ce335:sec:preliminaries}

\subsubsection*{Curvature conventions and two elementary consequences}

For a K\"ahler metric $g=(g_{i\bar j})$, we use the convention
\begin{equation}\label{ce335:eq:curvature-convention}
R_{i\bar j k\bar\ell}
=-\partial_k\partial_{\bar\ell}g_{i\bar j}
+g^{p\bar q}
 (\partial_k g_{i\bar q})(\partial_{\bar\ell}g_{p\bar j}).
\end{equation}
Thus the holomorphic bisectional curvature of $g$ is negative, that is, $\operatorname{HBC}(g)<0$,  means
\[
R(u,\bar u,v,\bar v)<0
\]
for every pair of nonzero $(1,0)$-vectors $u,v$ at the same point.
Equivalently, the induced Hermitian metric on the holomorphic tangent
bundle is Griffiths negative.

We use the Gauss equation and the curvature formula for holomorphic
subbundles with their induced metrics; see
\cite[Chapter~V, \S14]{Dem12}. In particular, a complex submanifold of a
K\"ahler manifold with strictly negative holomorphic bisectional curvature
inherits a K\"ahler metric with the same strict negativity.

\begin{lemma}\label{ce335:lem:graph}
Let $F:M\to N$ be holomorphic. Suppose $\omega_N$ is a K\"ahler metric
with nonpositive holomorphic bisectional curvature and $\omega_M$ is a
K\"ahler metric with strictly negative holomorphic bisectional curvature.
For every $a,b>0$, the form
\[
aF^*\omega_N+b\omega_M
\]
is a K\"ahler metric with strictly negative holomorphic bisectional
curvature, even if $F$ has critical points.
\end{lemma}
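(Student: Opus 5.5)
Consider the graph embedding $\Phi=(\mathrm{id},F):M\to M\times N$, $x\mapsto(x,F(x))$. It is a holomorphic embedding, and it is immersive at every point because its first component is the identity; critical points of $F$ therefore cause no trouble. On the product we take the metric $\omega_P:=b\,\mathrm{pr}_1^*\omega_M+a\,\mathrm{pr}_2^*\omega_N$. Since $\Phi^*\omega_P=b\omega_M+aF^*\omega_N$, this pulled-back form is K\"ahler: it is closed, and it is positive definite because $b\omega_M>0$ and $aF^*\omega_N\ge0$. So the task reduces to a curvature statement for the image $\Phi(M)$ as a complex submanifold of $(M\times N,\omega_P)$.

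\emph{Curvature of the product.} The Levi-Civita (Chern) connection of a product metric splits. For $U=(u_1,u_2)$ and $V=(v_1,v_2)$ in $T_{(x,y)}(M\times N)=T_xM\oplus T_yN$ we get
\[
R_P(U,\bar U,V,\bar V)=b\,R_M(u_1,\bar u_1,v_1,\bar v_1)+a\,R_N(u_2,\bar u_2,v_2,\bar v_2),
\]
because the scaling by $a$ and $b$ rescales both the metric and the $(0,4)$-curvature tensor by the same constant. Now take tangent vectors to the graph, $U=d\Phi(u)=(u,dF(u))$ and $V=(v,dF(v))$ with $u,v\neq0$. The first term is strictly negative by the hypothesis on $\omega_M$. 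The second term is $\le0$, including when $dF(u)=0$ or $dF(v)=0$, where it simply vanishes. Hence $R_P(U,\bar U,V,\bar V)<0$ for every pair of nonzero vectors tangent to $\Phi(M)$.

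\emph{Gauss equation.} By the subbundle curvature formula cited from \cite[Chapter~V, \S14]{Dem12}, the holomorphic bisectional curvature of a complex submanifold with the induced metric satisfies
\[
R_{\Phi(M)}(U,\bar U,V,\bar V)=R_P(U,\bar U,V,\bar V)-\langle\sigma(U,V),\sigma(U,V)\rangle\le R_P(U,\bar U,V,\bar V)<0,
\]
where $\sigma$ is the (symmetric, holomorphic) second fundamental form. Transporting this back by the biholomorphism $\Phi:M\to\Phi(M)$ gives the claim for $aF^*\omega_N+b\omega_M$.

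The main point requiring care is the sign convention. With the convention \eqref{ce335:eq:curvature-convention}, one has to confirm that the second fundamental form term enters with a minus sign (curvature decreases in submanifolds). This is exactly the remark stated before the lemma, so I would cite it and expect no real obstacle. The other checks, namely that the product curvature splits without cross terms and that the graph map is an embedding regardless of the critical points of $F$, are routine.
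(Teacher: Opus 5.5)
Your proposal is correct and follows the paper's proof. Both use the graph immersion of $M$ into the rescaled product $(M,b\omega_M)\times(N,a\omega_N)$ (the paper lists the factors in the opposite order), split the ambient curvature on graph vectors into a strictly negative $M$-term and a nonpositive $N$-term, and apply the Gauss equation to pass to the induced metric $aF^*\omega_N+b\omega_M$.
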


\begin{proof}
Use the holomorphic graph immersion
\[
(F,\operatorname{id}):M\longrightarrow
(N,a\omega_N)\times(M,b\omega_M).
\]
Its induced metric is the asserted sum. For nonzero $u,v\in T_xM$,
the ambient curvature evaluated on the corresponding graph vectors is
\[
\begin{aligned}
&aR^{\omega_N}\bigl(dF(u),\overline{dF(u)},dF(v),\overline{dF(v)}\bigr)
+bR^{\omega_M}(u,\bar u,v,\bar v)<0.
\end{aligned}
\]
The first term is nonpositive, including when an image vector vanishes,
and the second term is strictly negative. The Gauss equation subtracts
the squared norm of the second fundamental form, so strict negativity
holds for the induced metric as well.
\end{proof}

\begin{lemma}\label{ce335:lem:negative-vanishing}
Let $M$ carry a K\"ahler metric with strictly negative holomorphic
bisectional curvature, and let $Z\subset M$ be a compact complex
submanifold of positive dimension. Then
\[
H^0(Z,T_M|_Z)=0.
\]
Consequently, if $\varpi:M\to B$ is a proper holomorphic submersion
with positive-dimensional compact connected fibers, its Kodaira--Spencer
map is injective at every point.
\end{lemma}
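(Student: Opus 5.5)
We argue by the Bochner principle: the restriction $T_M|_Z$ with its induced Hermitian metric is Griffiths negative, and a Griffiths negative bundle on a compact manifold has no nonzero holomorphic sections. Concretely, let $s\in H^0(Z,T_M|_Z)$ and consider the smooth function $\varphi=|s|^2_h$ on $Z$, where $h$ is the restriction of the given K\"ahler metric. Since $s$ is holomorphic and the Chern connection $D$ of $(T_M|_Z,h)$ is the restriction of that of $T_M$, we compute for any $v\in T^{1,0}_zZ$
\[
\partial_v\partial_{\bar v}|s|^2
=|D_v s|^2 - R(s,\bar s,v,\bar v),
\]
where $R$ is the curvature of $M$ evaluated on $s(z)\in T_zM$ and $v\in T_zZ\subset T_zM$ (with the sign convention \eqref{ce335:eq:curvature-convention}, $\Theta$ on $T_M$ restricted to $Z$ is just $R$ with the last two slots in $T_Z$, so no second-fundamental-form correction enters: we restrict the bundle, not pass to a subbundle).

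Next, $Z$ is compact, so $\varphi$ attains its maximum at some $z_0$. If $s(z_0)\neq0$, pick any nonzero $v\in T_{z_0}Z$ (possible since $\dim Z>0$). Strict negativity gives $-R(s,\bar s,v,\bar v)>0$, so $\varphi$ is strictly subharmonic along the complex line through $v$ at $z_0$, contradicting the maximum principle for the restriction of $\varphi$ to a small holomorphic disc in $Z$ tangent to $v$. Hence $\max\varphi=0$ and $s\equiv0$. An alternative, equally short route is to note $\varphi$ is plurisubharmonic on $Z$, hence constant on each connected component of the compact manifold, and then constancy forces $R(s,\bar s,v,\bar v)=0$, so $s=0$.

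For the consequence, let $Z=M_b=\varpi^{-1}(b)$, a compact connected positive-dimensional submanifold. The exact sequence $0\to T_{M_b}\to T_M|_{M_b}\to \mathcal O_{M_b}\otimes T_bB\to 0$ gives a connecting map $T_bB=H^0(M_b,\mathcal O\otimes T_bB)\to H^1(M_b,T_{M_b})$, which is the Kodaira--Spencer map $\rho_{\varpi,b}$. Its kernel is the image of $H^0(M_b,T_M|_{M_b})$, which vanishes by the first part; thus $\rho_{\varpi,b}$ is injective. The only step requiring care is the curvature identity and its sign convention, which is the standard computation for holomorphic sections of a Hermitian bundle.
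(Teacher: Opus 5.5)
Your proposal is correct and follows essentially the same route as the paper: the Bochner identity $\partial_v\partial_{\bar v}|s|^2=|D_vs|^2-R(s,\bar s,v,\bar v)$ for the restricted bundle $T_M|_Z$, a maximum-principle contradiction at a positive maximum of $|s|^2$, and then the connecting homomorphism of the restricted tangent sequence on a compact connected fiber. Your extra remarks, that the restricted bundle carries the pullback Chern connection so no second-fundamental-form term appears, and the alternative argument via plurisubharmonicity, are both valid but not needed.
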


\begin{proof}
The bundle $T_M|_Z$ is Griffiths negative in every nonzero tangent
direction of $Z$. For a holomorphic section $s$, the usual pointwise
Bochner identity gives, in any nonzero direction $u\in T_zZ$,
\[
\partial_u\partial_{\bar u}|s|^2
=|\nabla'_u s|^2-
 \bigl\langle R^{T_M|_Z}(u,\bar u)s,s\bigr\rangle.
\]
If $s$ is nonzero, at a positive maximum of $|s|^2$ the right-hand side
is strictly positive and the left-hand side is nonpositive. This is
impossible.

For the final assertion, restrict the tangent sequence to a fiber:
\[
0\longrightarrow T_{M_b}\longrightarrow T_M|_{M_b}
\longrightarrow\mathcal{O}_{M_b}\otimes T_bB\longrightarrow0.
\]
Since $M_b$ is compact and connected,
$H^0(M_b,\mathcal{O}_{M_b})=\mathbb{C}$. The connecting homomorphism is
$\rho_{\varpi,b}$, and the preceding vanishing makes it injective.
\end{proof}

\begin{lemma}\label{ce335:lem:tangent-obstruction}
Suppose a complex manifold $M$ contains a smooth compact curve $G$ and
a holomorphic line subbundle
\[
\mathcal{O}_G\hookrightarrow T_M|_G.
\]
Then $M$ admits no K\"ahler metric of strictly negative holomorphic
bisectional curvature.
\end{lemma}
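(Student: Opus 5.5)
The plan is to argue by contradiction and reduce everything to the first part of Lemma~\ref{ce335:lem:negative-vanishing}. Suppose $M$ carries a K\"ahler metric $g$ with strictly negative holomorphic bisectional curvature. The curve $G$ is a compact complex submanifold of $M$ of positive dimension, namely one, so that lemma gives $H^0(G,T_M|_G)=0$.

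The given inclusion $\mathcal O_G\hookrightarrow T_M|_G$ is a holomorphic bundle map. The image of the constant section $1\in H^0(G,\mathcal O_G)$ is therefore a holomorphic section $s$ of $T_M|_G$. Because the inclusion is that of a line \emph{subbundle}, it is injective on every fiber, so $s$ vanishes nowhere. In particular $s\neq0$, which contradicts $H^0(G,T_M|_G)=0$. (Even mere injectivity of the sheaf map would force $s\neq 0$.)

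For completeness I would note why Lemma~\ref{ce335:lem:negative-vanishing} applies. The Chern curvature of $T_M|_G$ with the restricted metric $g|_G$ is the pullback of that of $(T_M,g)$. Hence for every nonzero $u\in T_zG$ and every nonzero $s(z)$,
\[
\langle R(u,\bar u)s,s\rangle=R(s,\bar s,u,\bar u)<0 .
\]
The maximum principle argument from that lemma then applies to $|s|^2_g$ on the compact curve $G$. There is essentially no obstacle here. The only point to check is that the lemma needs only the compactness and positive dimension of $G$, not that $G$ lie in a fiber, which is exactly what is used later for the curve $G$ dominating the base.
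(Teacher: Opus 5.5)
Your proposal is correct and follows the paper's proof: the image of the constant section $1$ is a nonzero holomorphic section of $T_M|_G$, which contradicts the vanishing $H^0(G,T_M|_G)=0$ supplied by Lemma~\ref{ce335:lem:negative-vanishing}. Your added remarks on the pulled-back Chern curvature, and on the fact that $G$ need not lie in a fiber, only spell out why that lemma applies.
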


\begin{proof}
Such an inclusion supplies a nonzero holomorphic section of $T_M|_G$,
contradicting Lemma~\ref{ce335:lem:negative-vanishing} if the asserted metric exists.
\end{proof}

\subsubsection*{Finite morphisms and Kodaira--Spencer classes}

\begin{lemma}\label{ce335:lem:trace-KS}
Let $r:Z\to H$ be a finite flat morphism of degree $m$ between smooth
projective varieties. For every vector bundle $E$ on $H$ and every $j$,
\[
r^*:H^j(H,E)\longrightarrow H^j(Z,r^*E)
\]
is injective.
Suppose, moreover, that $r$ is the fiber at $b\in B$ of a finite flat
morphism between smooth projective families over $B$,
\[
\begin{tikzcd}[column sep=large,row sep=large]
\mathscr Z\arrow[r,"\widetilde r"]\arrow[dr,"\varpi_Z"']&
\mathscr H\arrow[d,"\varpi_H"]\\
&B.
\end{tikzcd}
\]
Then, for $v\in T_bB$,
\begin{equation}\label{ce335:eq:KS-naturality}
H^1(dr)\bigl(\rho_{\varpi_Z,b}(v)\bigr)
=r^*\bigl(\rho_{\varpi_H,b}(v)\bigr)
\quad\text{in }H^1(Z,r^*T_H).
\end{equation}
In particular, injectivity of $\rho_{\varpi_H,b}$ implies injectivity
of $\rho_{\varpi_Z,b}$. Here $H^1(dr):H^1(Z,T_Z)\to H^1(Z,r^*T_H)$ denotes the induced map of $dr:T_Z\to r^*T_H$ on sheaf cohomology.
\end{lemma}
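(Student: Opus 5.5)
The proof splits into two parts: injectivity of pullback via the trace map, and naturality of Kodaira--Spencer classes.

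\emph{Injectivity.} Since $r$ is finite and flat of degree $m$ between smooth varieties, $r_*\mathcal O_Z$ is locally free of rank $m$. It carries the trace map $\operatorname{tr}:r_*\mathcal O_Z\to\mathcal O_H$, which satisfies $\operatorname{tr}(r^*f)=mf$. Tensoring with $E$ and using the projection formula gives maps
\[
E\xrightarrow{\ r^\sharp\ } r_*r^*E=E\otimes r_*\mathcal O_Z\xrightarrow{\ \operatorname{id}\otimes\operatorname{tr}\ }E
\]
whose composition is multiplication by $m$. Since $r$ is finite, it is affine, so $H^j(Z,r^*E)=H^j(H,r_*r^*E)$. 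Under this identification, $r^*$ on cohomology is induced by $r^\sharp$. Taking cohomology, the composite $H^j(H,E)\to H^j(Z,r^*E)\to H^j(H,E)$ is multiplication by $m\neq0$ in characteristic zero. Hence $r^*$ is injective.

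\emph{Naturality.} I would use the extension-class description: $\rho_{\varpi,b}(v)$ is the image of $v$ under the connecting map of the restricted tangent sequence, exactly as in the proof of Lemma~\ref{ce335:lem:negative-vanishing}. The differential $d\widetilde r$ gives a morphism of short exact sequences on $Z=\mathscr Z_b$:
\[
\begin{tikzcd}
0\arrow[r]&T_Z\arrow[r]\arrow[d,"dr"]&T_{\mathscr Z}|_Z\arrow[r]\arrow[d,"d\widetilde r"]&\mathcal O_Z\otimes T_bB\arrow[r]\arrow[d,equal]&0\\
0\arrow[r]&r^*T_H\arrow[r]&r^*(T_{\mathscr H}|_H)\arrow[r]&\mathcal O_Z\otimes T_bB\arrow[r]&0.
\end{tikzcd}
\]
The left square commutes because $\widetilde r$ preserves fibers. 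The right square commutes with the identity on the right because $\varpi_H\circ\widetilde r=\varpi_Z$.

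The bottom row is the pullback by $r$ of the tangent sequence of $H$ in $\mathscr H$. This row stays exact because a pullback of a sequence of vector bundles is exact. Naturality of connecting homomorphisms then gives
\[
\delta_{\mathrm{bottom}}(v)=H^1(dr)\bigl(\delta_{\mathrm{top}}(v)\bigr)=H^1(dr)\bigl(\rho_{\varpi_Z,b}(v)\bigr).
\]
Naturality of connecting maps under pullback of sequences gives
\[
\delta_{\mathrm{bottom}}(v)=r^*\delta_H(v)=r^*\rho_{\varpi_H,b}(v),
\]
where I regard $v$ as a constant section of $\mathcal O_H\otimes T_bB$ and use $H^0(H,\mathcal O)\to H^0(Z,\mathcal O)$. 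This proves \eqref{ce335:eq:KS-naturality}.

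\emph{Consequence.} If $\rho_{\varpi_Z,b}(v)=0$, then $r^*\rho_{\varpi_H,b}(v)=0$. By the injectivity just proved with $E=T_H$ and $j=1$, this gives $\rho_{\varpi_H,b}(v)=0$. Injectivity of $\rho_{\varpi_H,b}$ then forces $v=0$.

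The main point needing care is the compatibility of the two connecting-map identifications: the Čech/Dolbeault connecting map on $Z$ for the pulled-back sequence must agree with $r^*$ of the connecting map on $H$. I would check this directly with Čech cochains on an affine cover of $H$ and its preimage, which is an affine cover of $Z$. Lifts of $v$ on $H$ pull back to lifts on $Z$, so the coboundaries correspond. The trace step requires flatness, which is what makes $r_*\mathcal O_Z$ locally free and the trace defined; finite flatness is assumed in the statement.
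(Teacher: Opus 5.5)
Your proof is correct. The naturality half matches the paper's argument essentially line by line: the morphism of restricted tangent sequences induced by $d\widetilde r$, naturality of connecting homomorphisms, the \v{C}ech check with lifts of $1\otimes v$ on an affine cover of $H$ and its preimage in $Z$, and the final deduction with $E=T_H$, $j=1$.

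The difference is in the first assertion. The paper does not prove the injectivity of $r^*$; it cites \cite[Theorem~3.1(c)]{Wells}, a comparison theorem for proper surjective maps. You instead build an explicit left inverse up to the degree, using three ingredients:
\begin{itemize}
\item the trace $\operatorname{tr}:r_*\mathcal O_Z\to\mathcal O_H$, tensored with $E$;
\item the projection formula $r_*r^*E\simeq E\otimes r_*\mathcal O_Z$;
\item the isomorphism $H^j(Z,r^*E)\simeq H^j(H,r_*r^*E)$, which holds because higher direct images vanish for the affine map $r$.
\end{itemize}
Together these give $(\operatorname{id}_E\otimes\operatorname{tr})\circ r^*=m$ on $H^j(H,E)$, so $r^*$ is injective because $m\neq 0$ in characteristic zero.

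Your route is self-contained. It uses exactly the finite flat hypothesis built into the statement, since flatness is what makes $r_*\mathcal O_Z$ locally free and the trace defined. It also works verbatim both for coherent analytic sheaves and algebraically. The paper's citation is shorter and rests on a statement about proper surjective maps that does not need flatness, but it depends on an external reference for a step that, in this finite flat setting, you have shown to be elementary.
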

\begin{proof}
The cohomological assertion follows from \cite[Theorem 3.1(c)]{Wells}.
We now establish the compatibility of Kodaira--Spencer classes.
Identify the fibers over $b$ with
\(
Z=\varpi_Z^{-1}(b),
\) \(
H=\varpi_H^{-1}(b),
\)
so that $r=\widetilde r|_Z:Z\to H$. The commutativity of the
diagram means that
\(
\varpi_H\circ\widetilde r=\varpi_Z.
\)

Since $\varpi_Z$ and $\varpi_H$ are smooth, their tangent
sequences restricted to the fibers are
\[
0\longrightarrow T_Z
\longrightarrow T_{\mathscr Z}|_Z
\xrightarrow{\,d\varpi_Z\,}
\mathcal O_Z\otimes_{\mathbb C}T_bB
\longrightarrow0
\]
and
\[
0\longrightarrow T_H
\longrightarrow T_{\mathscr H}|_H
\xrightarrow{\,d\varpi_H\,}
\mathcal O_H\otimes_{\mathbb C}T_bB
\longrightarrow0.
\]
Let $\delta_Z$ and $\delta_H$ denote their connecting
homomorphisms. The Kodaira--Spencer maps are defined by
\[
\rho_{\varpi_Z,b}(v)=\delta_Z(1\otimes v),
\qquad
\rho_{\varpi_H,b}(v)=\delta_H(1\otimes v),
\qquad v\in T_bB.
\]

The second tangent sequence is locally split, so pulling it
back by $r$ gives an exact sequence on $Z$. Differentiating
$\varpi_H\circ\widetilde r=\varpi_Z$ then yields the following
commutative diagram of exact sequences on $Z$:
\[
\begin{tikzcd}[column sep=small,row sep=large]
0 \arrow[r] &
T_Z
  \arrow[r]
  \arrow[d,"dr"'] &
T_{\mathscr Z}|_Z
  \arrow[r,"d\varpi_Z"]
  \arrow[d,"d\widetilde r|_Z"] &
\mathcal O_Z\otimes_{\mathbb C}T_bB
  \arrow[r]
  \arrow[d,equal] &
0
\\
0 \arrow[r] &
r^*T_H
  \arrow[r] &
r^*(T_{\mathscr H}|_H)
  \arrow[r,"r^*(d\varpi_H)"'] &
\mathcal O_Z\otimes_{\mathbb C}T_bB
  \arrow[r] &
0.
\end{tikzcd}
\]
The middle vertical arrow takes values in
$r^*(T_{\mathscr H}|_H)$ because $\widetilde r|_Z=r$.
The right-hand vertical arrow is the identity because
$\widetilde r$ preserves the parameter in $B$.

Write
\[
\delta_{r^*H}:
H^0\bigl(Z,\mathcal O_Z\otimes_{\mathbb C}T_bB\bigr)
\longrightarrow H^1(Z,r^*T_H)
\]
for the connecting homomorphism of the bottom row.
Naturality of connecting homomorphisms gives
\[
H^1(dr)\bigl(\delta_Z(1\otimes v)\bigr)
=
\delta_{r^*H}(1\otimes v).
\]

We also have
\[
\delta_{r^*H}(1\otimes v)
=
r^*\bigl(\delta_H(1\otimes v)\bigr).
\]
To see this explicitly, choose an affine open cover
$\{U_i\}$ of $H$ on which the target tangent sequence splits,
and choose local holomorphic lifts
\(
W_i\in H^0(U_i,T_{\mathscr H}|_H)
\)
of the constant section $1\otimes v$. The differences
$W_j-W_i$ take values in $T_H$ and represent
$\delta_H(1\otimes v)$. On the inverse-image cover
$\{r^{-1}(U_i)\}$ of $Z$, the sections $r^*W_i$ are lifts of
$1\otimes v$ in the pulled-back sequence. Their differences are
$r^*(W_j-W_i)$, which represent both
$\delta_{r^*H}(1\otimes v)$ and
$r^*(\delta_H(1\otimes v))$.

Combining these identities with the definitions of the
Kodaira--Spencer maps gives
\[
\begin{aligned}
H^1(dr)\bigl(\rho_{\varpi_Z,b}(v)\bigr)
&=H^1(dr)\bigl(\delta_Z(1\otimes v)\bigr)\\
&=\delta_{r^*H}(1\otimes v)\\
&=r^*\bigl(\delta_H(1\otimes v)\bigr)\\
&=r^*\bigl(\rho_{\varpi_H,b}(v)\bigr)
\end{aligned}
\]
in $H^1(Z,r^*T_H)$. This proves
\eqref{ce335:eq:KS-naturality}.

Finally, suppose that $\rho_{\varpi_H,b}$ is injective and that
$v\in T_bB$ satisfies $\rho_{\varpi_Z,b}(v)=0$. The identity just
proved implies
\(
r^*\bigl(\rho_{\varpi_H,b}(v)\bigr)=0.
\)
By the first part of the proof, applied to $E=T_H$ and $j=1$,
the map
\(
r^*:H^1(H,T_H)\to H^1(Z,r^*T_H)
\)
is injective. Thus $\rho_{\varpi_H,b}(v)=0$, and the assumed
injectivity of $\rho_{\varpi_H,b}$ gives $v=0$. Hence
$\rho_{\varpi_Z,b}$ is injective.
\end{proof}

We will also use the following first-order rigidity observation. If
$r:Z\to H$ is a finite morphism of smooth projective curves and
$g(H)\ge2$, then
\begin{equation}\label{ce335:eq:map-rigidity}
H^0(Z,r^*T_H)=0,
\quad
\deg(r^*T_H)=\deg(r)(2-2g(H))<0.
\end{equation}
%Suppose that the Kodaira--Spencer classes of both curve families
%vanish in the parameter direction under consideration. Then
%suitable first-order changes of local coordinates remove the
%first-order variation of their transition maps. Thus both curves
%can be regarded as fixed to first order. The remaining variation
%of $r$ describes how the image $r(z)$ moves in $H$ for each fixed
%$z\in Z$, and therefore defines a holomorphic section of $r^*T_H$.
%Since $H^0(Z,r^*T_H)=0$, this variation must vanish. This argument
%only concerns first-order changes at the chosen parameter; it
%does not imply that nearby fibers are all biholomorphic.

\subsubsection*{The curve-fibration input}

The existence result below was first established by To and Yeung
\cite[Remark~1]{TY11}. Wan \cite[Corollary~1.4]{Wan26} subsequently gave
another proof by a direct metric construction. We state the
result in the form needed here.

\begin{theorem}\label{ce335:thm:curve-input}
Let $\varpi:M\to B$ be a proper holomorphic submersion between
compact complex manifolds, with connected fibers that are curves
of genus at least two. Suppose that its Kodaira--Spencer map is
injective everywhere and that $B$ admits a K\"ahler metric of
strictly negative holomorphic bisectional curvature. Then $M$
admits a K\"ahler metric of strictly negative holomorphic
bisectional curvature.
\end{theorem}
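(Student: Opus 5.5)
The plan follows the direct metric route attributed to Wan and only uses To and Yeung's Teichmüller argument as a fallback. First, since every fiber $M_b$ is a compact curve of genus at least two, uniformization gives a unique hyperbolic metric $\omega_b$ on each fiber (curvature $-1$). This metric depends smoothly on $b$, and by Schumacher's construction it extends to a relative form. Concretely, the fiberwise Kähler–Einstein condition $\mathrm{Ric}(\omega_b)=-\omega_b$ lets us take $\omega:=\sqrt{-1}\,\partial\bar\partial\log\det(g_{z\bar z})$ computed in local coordinates adapted to $\varpi$. This is a global, $d$-closed real $(1,1)$-form on $M$, namely the curvature form of the hermitian metric induced on $K_{M/B}$ by the hyperbolic metrics, and it restricts to $\omega_b$ on each fiber.

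Second, I compute the Hermitian metric $h$ that $\omega$ induces on the line bundle $T_{M/B}$ and show that it is Griffiths negative in every direction of $M$, not only in fiber directions. For a line bundle this means computing $\sqrt{-1}\,\Theta(T_{M/B},h)=-\omega$, up to the sign convention in \eqref{ce335:eq:curvature-convention}. So the task is to show that $\omega$ itself is positive, or at least that it combines with $p^*\omega_B$ into something positive. Schumacher's formula gives
\[
\omega^2 \;\ge\; c\,|\mu|^2\,\omega\wedge \varpi^*(\cdot),
\]
that is, the horizontal component of $\omega$ is the fiberwise solution $\phi=(\Box+1)^{-1}|\mu|^2$, where $\mu$ is the harmonic Kodaira–Spencer representative. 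The maximum principle makes $\phi>0$ as soon as $\mu\ne0$. Injectivity of the Kodaira–Spencer map therefore gives $\phi(v)>0$ for every nonzero horizontal $v$, so $\omega>0$ on $M$ and $T_{M/B}$ is Griffiths negative in all directions. This positivity step is the main obstacle. It needs the elliptic identity $(\Box+1)\phi=|\mu|^2$, the strict positivity of $(\Box+1)^{-1}$ on functions that are nonnegative and not identically zero, and compactness of $B$ to get uniform constants.

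Third, I form $\omega_k=\omega+k\,\varpi^*\omega_B$ and apply \cite[Theorem~1.2]{Wan26}. Its hypotheses are that the relative form induces a Griffiths negative metric on $T_{X/B}$ (established in the second step) and that the base has strictly negative holomorphic bisectional curvature (assumed). It then yields $\mathrm{HBC}(\omega_k)<0$ for $k\gg1$. The mechanism behind that theorem is as follows. Use the $\omega$-orthogonal splitting $T_M=T_{M/B}\oplus H$ and the Gauss–Codazzi-type formula for the curvature of $\omega_k$. As $k\to\infty$, the curvature on $H$ is dominated by $k$ times the pullback of the base curvature, which is strictly negative. The mixed and vertical terms are controlled by the Griffiths negativity of $T_{M/B}$ together with second-fundamental-form corrections of order $O(1)$ relative to $k$. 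Compactness of $M$ makes all these bounds uniform.

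Fallback for the third step: if citing that theorem feels insufficient, I would follow To and Yeung \cite{TY11}. The map $x\mapsto(M_{\varpi(x)},x)$ lifts locally to a holomorphic immersion into the Teichmüller space of pointed curves, and it is an immersion precisely by effectivity together with the fact that the point moves within the fiber. The Weil–Petersson metric there has strictly negative bisectional curvature \cite[Proposition~1]{TY11}, and the local lifts differ by mapping class group isometries, so the pulled-back metric is globally defined. Adding $\varpi^*\omega_B$ through Lemma~\ref{ce335:lem:graph} gives the conclusion in either case.
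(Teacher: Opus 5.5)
Your proposal is correct and follows the paper's route: the paper proves this theorem only by citing To--Yeung and Wan's Corollary~1.4, describing the latter as $\omega_{\mathrm{rel}}+k\,\varpi^*\omega_B$ built from the fiberwise hyperbolic metrics and presenting Wan's Theorem~1.2 as its generalization. That is exactly the chain you spell out (positivity of $\omega_{\mathrm{rel}}$ from effectivity, hence Griffiths negativity of $T_{M/B}$, then Theorem~1.2), and To--Yeung's Teichm\"uller immersion is your fallback. The only blemish is the displayed inequality with the placeholder $\varpi^*(\cdot)$, which is not a meaningful formula; what you actually use, correctly, is that for each nonzero $\xi\in T_bB$ the $\omega$-horizontal lift $v_\xi$ satisfies $\omega(v_\xi,\bar v_\xi)=(\Box+1)^{-1}|\mu_\xi|^2>0$ along $M_b$.
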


In Wan's construction, the fiberwise hyperbolic metrics determine
a relative K\"ahler form $\omega_{\mathrm{rel}}$ on $M$. If
$\omega_B$ is a K\"ahler form on $B$ with strictly negative
holomorphic bisectional curvature, then
\(
\omega_k=\omega_{\mathrm{rel}}+k\,\varpi^*\omega_B
\)
is a K\"ahler form with strictly negative holomorphic bisectional
curvature on $M$ for all sufficiently large $k$.

\subsection{A metric lemma for cyclic covers of surfaces}
\label{ce335:sec:metric-lemma}

\begin{proposition}\label{ce335:prop:cyclic-metric}
Let $(V,\omega_V)$ be a compact K\"ahler surface with
$\operatorname{HBC}(\omega_V)<0$. Let
\(
D=\bigsqcup_{i=1}^s D_i\subset V
\)
be a nonempty disjoint union of smooth connected curves satisfying
$D_i^2<0$. Suppose
\(
r:Z\to V
\)
is a smooth cyclic cover of degree five, totally ramified along $D$ and
unramified elsewhere. Then there is a function
$\varphi\in C^\infty(Z,\mathbb R)$ such that
\[
\omega_Z=r^*\omega_V+\sqrt{-1}\partial\bar\partial\varphi
\]
is a K\"ahler form with strictly negative holomorphic bisectional
curvature. In particular, $[\omega_Z]=r^*[\omega_V]$.
\end{proposition}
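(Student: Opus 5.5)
The plan is to follow the classical Mostow--Siu / Zheng style construction for branched covers. Write $\tilde D_i=r^{-1}(D_i)_{\mathrm{red}}$, so $r^*D_i=5\tilde D_i$ and $r|_{\tilde D_i}:\tilde D_i\to D_i$ is an isomorphism. Away from $\tilde D$, the form $r^*\omega_V$ is a K\"ahler metric locally isometric to $\omega_V$, so it has strictly negative bisectional curvature there. Along $\tilde D$ it degenerates in the normal direction: in local coordinates $(w,z)$ on $Z$ with $r(w,z)=(w,z^5)=(w,t)$, the pullback of $|dt|^2$ is $25|z|^8|dz|^2$. The correction term $\sqrt{-1}\partial\bar\partial\varphi$ must therefore restore positivity near $\tilde D$ without destroying negativity elsewhere.

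\textbf{Step 1: the ansatz.} Fix a hermitian metric $h_i$ on $\mathcal O_V(D_i)$ with canonical section $\sigma_i$. Since $D_i^2<0$, the normal bundle $N_{D_i}$ is negative. One can therefore choose $h_i$ so that $-\sqrt{-1}\partial\bar\partial\log h_i$ is negative near $D_i$ in the normal direction, and supported in a small tubular neighborhood. This is the Grauert-type negativity; it is the only place the hypothesis $D_i^2<0$ enters. Let $\tau_i$ be the induced section of $\mathcal O_Z(\tilde D_i)$ with $\tau_i^5=r^*\sigma_i$ and metric $h_i^{1/5}$. Set
\[
\varphi=\varepsilon\sum_i\chi_i\,|\tau_i|^{2}_{h_i^{1/5}},
\]
or, more flexibly, $\varepsilon\sum_i\chi_i\,|\tau_i|^{2/\,\cdot}$-type powers. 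Here $\chi_i$ is a cutoff equal to $1$ near $\tilde D_i$, and $|\tau_i|^2$ is a smooth function on $Z$ because the cover is cyclic. Locally $|\tau_i|^2=e^{-\psi}|z|^2$, so $\sqrt{-1}\partial\bar\partial\varphi$ contributes $\varepsilon e^{-\psi}|dz|^2+O(\varepsilon|z|)$ terms near $\tilde D$. Thus $\omega_Z$ is K\"ahler on a neighborhood of $\tilde D$ for small $\varepsilon$. Away from $\tilde D$ it is a $C^2$-small perturbation of $r^*\omega_V$ on the region where $r^*\omega_V$ is uniformly nondegenerate, so it stays K\"ahler there.

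\textbf{Step 2: curvature away from the branch locus.} Split $Z$ into $\{|\tau|\ge\delta\}$ and $\{|\tau|<\delta\}$. On the outer region, strict negativity is an open condition on the compact set. For fixed $\delta$, taking $\varepsilon\to0$ makes $\omega_Z\to r^*\omega_V$ in $C^2$, so negativity persists.

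\textbf{Step 3: the main obstacle, curvature in the shrinking collar $\{|\tau|<\delta\}$.} In the coordinates $(w,z)$ the metric is approximately
\[
g_{w\bar w}+\bigl(25|z|^8 g_{t\bar t}+\varepsilon e^{-\psi}\bigr)|dz|^2+\text{mixed terms}.
\]
I would compute the full curvature tensor using \eqref{ce335:eq:curvature-convention}, handling each part as follows.

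The normal--normal component is governed by $-\partial_z\partial_{\bar z}\log(\varepsilon e^{-\psi}+25|z|^8)$. Its leading part is $-\varepsilon|z|^6/(\cdots)^2\le0$ plus $\partial\bar\partial\psi$. The latter has a favourable sign by the negativity of $N_{D}$ chosen in Step 1; this is the analogue of Zheng's computation, and it is why the weight $h_i$ must be negatively curved along $D$.

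The tangential--tangential component reduces, at $z=0$, to the curvature of $D_i$ with its induced metric from $\omega_V$. That curvature is strictly negative, by the Gauss equation and $\operatorname{HBC}(\omega_V)<0$.

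The mixed bisectional term $R(\partial_w,\bar\partial_w,\partial_z,\bar\partial_z)$ at $z=0$ involves $-\partial_w\partial_{\bar w}(\varepsilon e^{-\psi})$ together with a quadratic term. This is again controlled by the sign of $\partial\bar\partial\psi$ in the tangential directions, which is why $\psi$ should be chosen so that $e^{-\psi}$ is plurisuperharmonic along $D$. In practice one may replace $|\tau|^2$ by $|\tau|^2e^{-A\rho}$, with $\rho$ the squared distance to $D$, to gain a free parameter.

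Off-diagonal terms and the error terms $O(|z|)$, $O(\varepsilon)$ must be absorbed by the strictly negative diagonal ones. The delicate point is uniformity as both $\delta$ and $\varepsilon\to0$. I expect to fix $\delta$ small first, depending only on $(V,\omega_V,D,h)$, using scaling in $z\mapsto\varepsilon^{-1/8}z$ to compare with a model metric. I would then check that the model metric on $D\times\mathbb C$ has negative bisectional curvature; if it does not, I would adjust the exponent in the ansatz.

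\textbf{Step 4: conclusion.} Combining Steps 2 and 3 gives $\operatorname{HBC}(\omega_Z)<0$ everywhere. Since $\varphi$ is a global smooth function, $[\omega_Z]=r^*[\omega_V]$ is immediate.
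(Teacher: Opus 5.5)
Your proposal has a genuine gap. It is located on the ramification curve itself, not in the uniformity as $\varepsilon\to0$.

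\textbf{The normal curvature vanishes on $\tilde D$.} You pull the weight $h_i^{1/5}$ back from $V$, so in coordinates with $r(w,z)=(w,z^5)$ you have $|\tau_i|^2=|z|^2H(w,z^5)$. The potential therefore contains no $|z|^4$ term. Meanwhile $r^*\omega_V$ contributes only $25|z|^8G_{t\bar t}$ and $5z^4G_{t\bar w}$ to the normal entries. Inserting this into \eqref{ce335:eq:curvature-convention}, the quantities $\partial_zg_{z\bar z}$, $\partial_zg_{z\bar w}$ and $\partial_z\partial_{\bar z}g_{z\bar z}$ all vanish on $\{z=0\}$. Hence $R_{z\bar zz\bar z}=0$ along $\tilde D$ for every $\varepsilon$ and every choice of $h_i$, and strict negativity fails for $u=v$ normal.

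The ``favourable sign'' of $\partial\bar\partial\psi$ you invoke in Step~3 does not exist there. Since $\psi$ depends on $z$ only through $z^5$, its normal Hessian is $O(|z|^8)$. For the same reason your rescaled model $(1+25|\zeta|^8)|d\zeta|^2$ has zero curvature at $\zeta=0$. Changing the exponent cannot repair this: non-integer powers of $|\tau|^2$ are not smooth, and higher integer powers alone degenerate along $\tilde D$. The weight $e^{-A\rho}$ with $A>0$ lowers the $|z|^4$ coefficient, which moves the normal curvature the wrong way.

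The missing ingredient is an added term $K|\tau_i|^4$, the paper's $K\rho^2$. It changes neither the metric nor its first derivatives on $\tilde D$, but shifts the unit normal coefficient to $b_0-4K$. For your pulled-back weight $b_0=0$, so any $K>0$ works.

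\textbf{Sign slip in the mixed term.} On $\tilde D$ one finds $R_{w\bar wz\bar z}=-\varepsilon h_0\,\partial_w\partial_{\bar w}\log h_0$ with $h_0=H(w,0)$. What is needed is that the induced metric on $N_{\tilde D_i}$ have strictly negative curvature at every point, in the tangential direction along $\tilde D_i$. This is available because $\deg N_{\tilde D_i}=D_i^2/5<0$. If $e^{-\psi}$ were plurisuperharmonic along $D$, as you write, this term would be $\ge0$.

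\textbf{The collar then needs no scaling argument.} A deck transformation acts on $T\tilde D$-adapted frames by $\operatorname{diag}(1,\zeta_5)$. Invariance therefore kills every curvature component with unbalanced normal indices, and on $\tilde D$
\[
R(u,\bar u,v,\bar v)=a|u_1v_1|^2+b_K|u_2v_2|^2+c|u_1v_2+u_2v_1|^2 .
\]
Here $a<0$ is the curvature of $D_i$, $b_K=b_0-4K$, and $c=-\partial_w\partial_{\bar w}\log h_0/g_0$. All three are negative. Continuity then gives negativity of $\omega_1=r^*\omega_V+\sqrt{-1}\partial\bar\partial(|\tau|^2+K|\tau|^4)$ on a fixed neighbourhood $U$ of $\tilde D$.

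Where the cutoff equals $1$, your form is $(1-\varepsilon)r^*\omega_V+\varepsilon\omega_1$. By Lemma~\ref{ce335:lem:graph} this is negatively curved for every $\varepsilon\in(0,1)$, even though $r^*\omega_V$ degenerates along $\tilde D$. Smallness of $\varepsilon$ is then used only on the compact transition region, where $r^*\omega_V$ is nondegenerate, exactly as in your Step~2. No absorption of off-diagonal error terms in a shrinking collar is required.
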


\begin{proof}
Let
\(
\mathcal R_i:=\bigl(r^{-1}(D_i)\bigr)_{\mathrm{red}},
\,
\mathcal R:=\bigsqcup_i\mathcal R_i.
\)
Thus $\mathcal R$ is the reduced ramification divisor: each
ramification curve is included with multiplicity one. Since $r$
has degree five and is totally ramified along each $D_i$, the
curve $\mathcal R_i$ is smooth and the restriction
\(
r|_{\mathcal R_i}:\mathcal R_i\to D_i
\)
is an isomorphism. Locally, one can choose coordinates such that
\(
r(z,w)=(z,w^5),
\,
\mathcal R_i=\{w=0\},
\,
D_i=\{y=0\},
\)
where $y$ is the second coordinate on $V$. The defining function
$y$ of $D_i$ therefore pulls back to $w^5$, which vanishes to
order five along $\mathcal R_i$. Hence, as divisors on $Z$,
\(
r^*D_i=5\mathcal R_i.
\)

Denote the holomorphic normal line bundles of these curves by
\(
N_{\mathcal R_i/Z}
:=T_Z|_{\mathcal R_i}/T_{\mathcal R_i}
\) and \(
N_{D_i/V}
:=T_V|_{D_i}/T_{D_i}.
\)
For a smooth curve in a smooth surface, its normal line bundle
is the restriction of the associated divisor line bundle, see \cite[Page 147]{MR1288523}. Thus
\(
N_{\mathcal R_i/Z}
\simeq\mathcal O_Z(\mathcal R_i)|_{\mathcal R_i},
\) \(
N_{D_i/V}
\simeq\mathcal O_V(D_i)|_{D_i}.
\)
Using these identifications and $r^*D_i=5\mathcal R_i$, we obtain
\begin{equation}\label{ce335:eq:normal-fifth}
\begin{aligned}
N_{\mathcal R_i/Z}^{\otimes5}
&\simeq\mathcal O_Z(5\mathcal R_i)|_{\mathcal R_i}\\
&\simeq\bigl(r^*\mathcal O_V(D_i)\bigr)|_{\mathcal R_i}\\
&\simeq(r|_{\mathcal R_i})^*
       \bigl(\mathcal O_V(D_i)|_{D_i}\bigr)\\
&\simeq(r|_{\mathcal R_i})^*N_{D_i/V}.
\end{aligned}
\end{equation}
Since $r|_{\mathcal R_i}$ is an isomorphism, taking degrees gives
\[
5\deg N_{\mathcal R_i/Z}
=\deg N_{D_i/V}
=D_i^2,
\]
where $D_i^2$ denotes the self-intersection number of $D_i$ in
$V$. Consequently, the assumption $D_i^2<0$ implies
\begin{equation}\label{ce335:eq:ram-normal-degree}
\deg N_{\mathcal R_i/Z}=\frac{D_i^2}{5}<0.
\end{equation}

\medskip
\noindent\emph{Choice of a normal metric.}
A line bundle of negative degree on a compact curve has a smooth
Hermitian metric with strictly negative curvature at every point.
 Choose such a metric on
each $N_{\mathcal{R}_i/Z}$.

Using
\(
\mathcal{O}_Z(\mathcal{R})|_{\mathcal{R}_i}\simeq N_{\mathcal{R}_i/Z},
\)
extend these metrics on $N_{\mathcal{R}_i/Z}$ to a smooth Hermitian metric $h_{\mathrm{ext}}$ on
$\mathcal{O}_Z(\mathcal{R})$. The covering group $\mu_5$ acts
naturally on this line bundle. Replace $h_{\mathrm{ext}}$ by its average
\(
h=\frac{1}{5}\sum_{\gamma\in\mu_5}\gamma^*h_{\mathrm{ext}}.
\)
This is a smooth $\mu_5$-invariant Hermitian metric. Each covering
transformation fixes $\mathcal R$ pointwise and acts on its
normal lines by multiplication by a fifth root of unity.
Such multiplication preserves every Hermitian norm. Hence the
averaged metric agrees with the prescribed normal-bundle metric
along $\mathcal R$.

Let $s_{\mathcal R}$ be the canonical holomorphic section of
$\mathcal O_Z(\mathcal R)$, whose zero divisor is $\mathcal R$,
and define
\(
\rho:=|s_{\mathcal R}|_h^2.
\)
The canonical section is preserved by the natural action on
$\mathcal O_Z(\mathcal R)$, and $h$ is invariant. Consequently,
$\rho$ is a smooth, nonnegative, $\mu_5$-invariant function on
$Z$, vanishing precisely along $\mathcal R$.

For $K>0$, set
\begin{equation}\label{ce335:eq:local-metric}
\omega_K=r^*\omega_V+\sqrt{-1}\partial\bar\partial(\rho+K\rho^2).
\end{equation}
We first prove that, for a suitable $K$, this form is a negatively
curved K\"ahler metric on a neighborhood of $\mathcal{R}$.

\medskip
\noindent\emph{The metric and mixed curvature on the ramification curve.}
Near a point of $\mathcal{R}$, choose coordinates in which
\[
r(z,w)=(z,w^5),\qquad \mathcal{R}=(w=0),
\qquad \rho=h(z,w)|w|^2.
\]
Here $h$ also denotes the positive local coefficient of the line-bundle
metric. Write
\[
g_0(z)=(\omega_V)_{z\bar z}(z,0),\qquad h_0(z)=h(z,0).
\]
Along $w=0$, the Hermitian matrix of \eqref{ce335:eq:local-metric} is
\begin{equation}\label{ce335:eq:metric-diagonal}
\begin{pmatrix}g_0(z)&0\\0&h_0(z)\end{pmatrix}.
\end{equation}
Thus it is positive definite there, independently of $K$.

We compute the mixed curvature along the ramification curve
$\mathcal R=\{w=0\}$. Since
$h(z,\zeta_5w)=h(z,w)$, the Taylor expansion of $h$ contains
no terms linear in $w$ or $\bar w$. Thus
\[
h(z,w)=h_0(z)+O(|w|^2),
\quad
\rho+K\rho^2=h_0(z)|w|^2+O(|w|^4).
\]
The pullback term is also easy to control. Since
$r(z,w)=(z,w^5)$, its relevant coefficients satisfy
\(
(r^*\omega_V)_{z\bar z}=g_0+O(|w|^5),
\) \(
(r^*\omega_V)_{z\bar w}=O(|w|^4).
\)
Consequently, the coefficients of $\omega_K$ have the expansions
\[
\begin{aligned}
g_{z\bar z}
&=g_0+(\partial_z\partial_{\bar z}h_0)|w|^2
  +O(|w|^4),\\
g_{z\bar w}
&=(\partial_z h_0)w+O(|w|^3),\\
g_{w\bar z}
&=(\partial_{\bar z}h_0)\bar w+O(|w|^3).
\end{aligned}
\]
Here the remainder terms are understood in the smooth Taylor
sense.

Evaluating the derivatives needed for the curvature formula
at $w=0$, we obtain
\[
\begin{gathered}
\partial_w\partial_{\bar w}g_{z\bar z}
=\partial_z\partial_{\bar z}h_0,
\qquad
\partial_w g_{z\bar z}
=\partial_{\bar w}g_{z\bar z}=0,\\
\partial_w g_{z\bar w}=\partial_z h_0,
\qquad
\partial_{\bar w}g_{w\bar z}=\partial_{\bar z}h_0.
\end{gathered}\]
By \eqref{ce335:eq:metric-diagonal}, the inverse metric at
$w=0$ is $\operatorname{diag}(g_0^{-1},h_0^{-1})$.
Thus only the term with $p=q=w$ contributes to the sum in
\eqref{ce335:eq:curvature-convention}, and we find
\begin{equation}\label{ce335:eq:mixed-curvature}
\begin{aligned}
R_{z\bar z w\bar w}
&=-\partial_w\partial_{\bar w}g_{z\bar z}
  +\frac{1}{h_0}
   (\partial_w g_{z\bar w})
   (\partial_{\bar w}g_{w\bar z})\\
&=-\partial_z\partial_{\bar z}h_0
  +\frac{(\partial_z h_0)(\partial_{\bar z}h_0)}{h_0}\\
&=-h_0\partial_z\partial_{\bar z}\log h_0.
\end{aligned}
\end{equation}

The function $h_0$ is the local coefficient of the prescribed
Hermitian metric on the normal line bundle. Its curvature
coefficient is
$-\partial_z\partial_{\bar z}\log h_0$, which is strictly
negative by our choice of that metric. To express the mixed
curvature in unit directions, take
\(
e_1=g_0^{-1/2}\partial_z,
\,
e_2=h_0^{-1/2}\partial_w.
\)
Then
\begin{equation}\label{ce335:eq:c-negative}
c:=R(e_1,\bar e_1,e_2,\bar e_2)
=\frac{R_{z\bar z w\bar w}}{g_0h_0}
=-\frac{\partial_z\partial_{\bar z}\log h_0}{g_0}<0.
\end{equation}
Finally, the term $K\rho^2$ begins at order $|w|^4$ in the
potential. It therefore changes neither the metric at $w=0$
nor the derivatives used in the mixed-curvature calculation
above. Hence $c$ is independent of $K$ along $\mathcal R$.

\medskip
\noindent\emph{Tangential and normal curvature.}
The form $\omega_K$ is invariant under the deck transformation
$(z,w)\mapsto(z,\zeta_5w)$. Its fixed curve $\mathcal{R}$ is totally geodesic:
an isometry fixes the geodesic with prescribed initial data tangent to
its fixed locus, by uniqueness of geodesics. The restriction of
$\omega_K$ to $\mathcal{R}_i$ is precisely the pullback of
$\omega_V|_{D_i}$. Hence the unit tangential curvature coefficient
\(
a=R_{1\bar1 1\bar1}
\)
is strictly negative, by the Gauss equation for $D_i\subset V$.

Let $b_K=R_{2\bar2 2\bar2}$ be the unit normal coefficient, and let
$b_0$ denote the coefficient for $K=0$. The function $\rho^2$ vanishes
to order four in the normal variable. Consequently, adding
$\sqrt{-1}\partial\bar\partial(K\rho^2)$ changes neither the metric nor its first derivatives
on $\mathcal{R}$. Since
\[
\partial_w^2\partial_{\bar w}^2(K\rho^2)\big|_{w=0}
=4Kh_0^2,
\]
\eqref{ce335:eq:curvature-convention} gives the exact identity
\begin{equation}\label{ce335:eq:normal-curvature}
b_K=b_0-4K.
\end{equation}
The function $b_0$ is bounded on the compact curve $\mathcal{R}$. Fix $K$ so
large that $b_K<0$ everywhere on $\mathcal{R}$.

\medskip
\noindent\emph{Control of every bisectional curvature component.}
At a point of $\mathcal{R}$, take a unit tangential vector $e_1$ and a unit
normal vector $e_2$. The differential of a generator of $\mu_5$ acts as
$\operatorname{diag}(1,\zeta_5)$. In a component
$R_{i\bar j k\bar\ell}$, the difference between the number of
holomorphic normal indices and antiholomorphic normal indices is an
integer between $-2$ and $2$. Invariance forces the component to vanish
unless this difference is zero. K\"ahler symmetries then leave precisely
\[
\begin{gathered}
R_{1\bar1 1\bar1}=a,\qquad R_{2\bar2 2\bar2}=b_K,\\
R_{1\bar1 2\bar2}=R_{1\bar2 2\bar1}
=R_{2\bar1 1\bar2}=R_{2\bar2 1\bar1}=c.
\end{gathered}
\]
Thus, for $u=u_1e_1+u_2e_2$ and $v=v_1e_1+v_2e_2$,
\begin{equation}\label{ce335:eq:all-bisectional}
R(u,\bar u,v,\bar v)
=a|u_1v_1|^2+b_K|u_2v_2|^2
 +c|u_1v_2+u_2v_1|^2.
\end{equation}
All three coefficients are strictly negative. The elementary inequality
\begin{equation}\label{ce335:eq:squares}
|u_1v_1|^2+|u_2v_2|^2+|u_1v_2+u_2v_1|^2
\ge\frac12\|u\|^2\|v\|^2
\end{equation}
follows by expanding the last square and using
$2|u_1u_2|\le\|u\|^2$ and $2|v_1v_2|\le\|v\|^2$.
Compactness gives a constant $\lambda>0$ with
$a,b_K,c\le-\lambda$ on $\mathcal{R}$. Therefore
\[
R(u,\bar u,v,\bar v)
\le-\frac{\lambda}{2}\|u\|^2\|v\|^2
\quad\text{on }\mathcal{R}.
\]
By continuity and \eqref{ce335:eq:metric-diagonal}, there is a fixed open
neighborhood $U$ of $\mathcal{R}$ on which $\omega_K$ is K\"ahler and has
strictly negative holomorphic bisectional curvature.

\medskip
\noindent\emph{Global patching.}
Choose an open neighborhood $U_0$ with
$\mathcal{R}\subset U_0\Subset U$, and a smooth function
$0\le\chi\le1$ that equals $1$ on $U_0$ and has compact support in $U$.
For $0<\varepsilon<1$, define the global closed real $(1,1)$-form
\begin{equation}\label{ce335:eq:global-metric}
\omega_\varepsilon
=r^*\omega_V+
 \varepsilon\sqrt{-1}\partial\bar\partial\bigl(\chi(\rho+K\rho^2)\bigr).
\end{equation}
On $U_0$ this is exactly
\[
\omega_\varepsilon
=(1-\varepsilon)r^*\omega_V+\varepsilon\omega_K.
\]
It is K\"ahler with strictly negative holomorphic bisectional curvature
by Lemma~\ref{ce335:lem:graph}, applied to $r|_{U_0}:U_0\to V$. This argument
remains valid at the ramification curve, where $r^*\omega_V$ is
degenerate.

The compact set $\operatorname{Supp}\chi\setminus U_0$ is disjoint from $\mathcal{R}$.
There $r^*\omega_V$ is already a nondegenerate negatively curved
K\"ahler metric. The perturbation in \eqref{ce335:eq:global-metric} tends to
zero in the $C^2$ norm of the metric as $\varepsilon\to0$.
Positivity and strict negativity of bisectional curvature consequently
persist there for all sufficiently small $\varepsilon>0$. Outside
$\operatorname{Supp}\chi$ the metric is unchanged.

The order of choices is $h$, then $K$, then $U,U_0,\chi$, and finally
$\varepsilon$. Thus the compact transition region is fixed before the
small-perturbation argument is used. Formula
\eqref{ce335:eq:global-metric} also proves
$[\omega_\varepsilon]=r^*[\omega_V]$.
\end{proof}

%\begin{remark}\label{ce335:rem:degree-versus-metric}
%The proof needs a normal metric with strictly negative curvature at
%\emph{every} point. A pullback of a negatively curved line-bundle
%metric under a ramified map of curves need not have this property.
%The negative degree in \eqref{ce335:eq:ram-normal-degree}, followed by the
%choice of a new metric, is what supplies the required strictness.
%\end{remark}

\subsection{From fiber metrics to a relative K\"ahler form}
\label{ce335:sec:relative-negative-form}

Formula~\eqref{ce335:eq:global-metric} provides more than the existence
of a negatively curved metric on a single covering surface: the metric
is obtained from a prescribed pullback form by adding a global
$\partial\bar\partial$-potential. This additional feature allows the
fiber metrics to be assembled into a closed form on the total space.
We record the relevant gluing statement.

\begin{proposition}\label{ce335:prop:relative-negative-form}
Let $p:M\to B$ be a compact holomorphic fibration with
positive-dimensional fibers, and let $\eta$ be a
smooth real $d$-closed $(1,1)$-form on $M$. Suppose that, for every
$b\in B$, there is a function $\varphi_b\in C^\infty(M_b,\mathbb R)$
such that
\(
\eta|_{M_b}+\sqrt{-1}\partial_b\bar\partial_b\varphi_b
\)
is a K\"ahler form with strictly negative holomorphic bisectional
curvature. Then there exists $\Phi\in C^\infty(M,\mathbb R)$ such that
\begin{equation}\label{ce335:eq:relative-negative-form}
\omega=\eta+\sqrt{-1}\partial\bar\partial\Phi
\end{equation}
is a relative K\"ahler form and $\omega|_{M_b}$ has strictly negative
holomorphic bisectional curvature for every $b\in B$. In particular,
$[\omega]=[\eta]$ in $H^2(M,\mathbb R)$.

If $B$ admits a K\"ahler form $\omega_B$, then
$\omega+k p^*\omega_B$ is a K\"ahler form on $M$ for all sufficiently
large $k$. Its restriction to every fiber is still $\omega|_{M_b}$.
\end{proposition}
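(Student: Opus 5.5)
The plan is to glue the fiber potentials by a partition of unity on $B$, using openness of the strict negativity condition and local triviality of $p$.

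\emph{Step 1: local extension and openness.} Fix $b_0\in B$. By Ehresmann's theorem, choose a neighborhood $W\ni b_0$ and a smooth trivialization $p^{-1}(W)\cong M_{b_0}\times W$, which need not be holomorphic. Extend $\varphi_{b_0}$ to $p^{-1}(W)$ by pulling back along the projection to $M_{b_0}$, and call the extension $\widetilde\varphi_{b_0}$. Then $\eta|_{M_b}+\sqrt{-1}\partial_b\bar\partial_b(\widetilde\varphi_{b_0}|_{M_b})$ depends smoothly on $b$. This holds in the sense that its coefficients in fiber-adapted holomorphic coordinates, together with their derivatives up to order two, vary continuously with $b$; indeed $\partial_b\bar\partial_b$ is computed with the varying complex structure, and its coefficients depend smoothly on $b$.

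Positivity is an open condition. Strict negativity of bisectional curvature is also open under $C^2$-small perturbations of the metric on a compact manifold, since it amounts to the negativity of a continuous function on the compact bundle of pairs of unit vectors. Hence, after shrinking to $W_{b_0}\Subset W$ and using compactness of $p^{-1}(\overline{W_{b_0}})$, the fiber forms over $W_{b_0}$ are K\"ahler with $\operatorname{HBC}<0$.

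\emph{Step 2: convex combination of potentials.} Cover the compact base $B$ by finitely many such sets $W_1,\dots,W_N$, with extended potentials $\Phi_j$ on $p^{-1}(W_j)$. Choose a partition of unity $\{\chi_j\}$ on $B$ subordinate to this cover and set
\[
\Phi=\sum_j (p^*\chi_j)\,\Phi_j .
\]
Since $p^*\chi_j$ is constant on each fiber, on $M_b$ we get
\[
\omega|_{M_b}=\sum_j\chi_j(b)\bigl(\eta|_{M_b}+\sqrt{-1}\partial_b\bar\partial_b\Phi_j|_{M_b}\bigr).
\]
This is a convex combination of K\"ahler forms on the fixed complex manifold $M_b$, so it is K\"ahler. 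The form $\omega=\eta+\sqrt{-1}\partial\bar\partial\Phi$ is real, $d$-closed and of type $(1,1)$, and $[\omega]=[\eta]$.

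\emph{Step 3: the main obstacle.} A convex combination of metrics with $\operatorname{HBC}<0$ need not have $\operatorname{HBC}<0$, and this is the step I expect to be hardest. I would handle it by enlarging the data via the graph trick of Lemma~\ref{ce335:lem:graph}. A sum $\sum_j c_j\omega_j$ of K\"ahler metrics on the same complex manifold is the metric induced by the diagonal embedding into $\prod_j(M_b,c_j\omega_j)$. The ambient bisectional curvature on diagonal vectors is $\sum_j c_jR^{\omega_j}(u,\bar u,v,\bar v)$. Each term is strictly negative when $c_j>0$, and terms with $c_j=0$ are simply absent. By the Gauss equation the induced curvature is at most this sum, which is strictly negative because at least one $c_j=\chi_j(b)$ is positive. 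So the diagonal-embedding version of Lemma~\ref{ce335:lem:graph} applies, with all maps equal to the identity, and it gives strict negativity on every fiber.

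\emph{Step 4: the final assertion.} Suppose $B$ carries a K\"ahler form $\omega_B$. On the compact manifold $M$, $\omega$ is positive on the vertical bundle $T_{M/B}$, while $p^*\omega_B$ is positive on any complement of it. Splitting a tangent vector as $v+h$ with $v$ vertical and $h$ in a fixed smooth horizontal complement, and bounding the mixed term of $\omega$ by Cauchy--Schwarz, gives $\omega+kp^*\omega_B>0$ for $k\gg1$. Moreover $p^*\omega_B$ vanishes on fibers, so the restriction to each fiber is unchanged.
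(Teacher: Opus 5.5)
Your proposal is correct and follows essentially the same route as the paper. Both arguments extend each fiber potential locally and use openness plus properness to obtain a neighborhood in $B$. They then glue the potentials with $p^*\chi_j$, so that each fiber sees a constant convex combination, and obtain strict negativity from the diagonal-embedding Gauss-equation argument. The last step is a uniform positivity estimate for $\omega+kp^*\omega_B$: the paper uses a Schur complement where you use Cauchy--Schwarz.

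There is one cosmetic difference. The paper extends $\varphi_{b_0}$ to a global function on $M$ by a tubular neighborhood and cutoff. You extend only over $p^{-1}(W)$ through an Ehresmann trivialization. This is equally valid, because $(p^*\chi_j)\Phi_j$ extends by zero.
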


\begin{proof}
We first explain why constant positive linear combinations preserve
strictly negative holomorphic bisectional curvature. Let
$g_1,\ldots,g_r$ be K\"ahler metrics with this property on a complex
manifold $F$, and let $\lambda_1,\ldots,\lambda_r$ be positive real
constants. The diagonal map
\(
\delta:(F,\sum_{j=1}^r\lambda_jg_j)
\to\prod_{j=1}^r(F,\lambda_jg_j)
\)
is a holomorphic isometric immersion. If $g=\sum_j\lambda_jg_j$ and
$u,v\in T_x^{1,0}F$ are nonzero, the Gauss equation gives
\begin{equation}\label{ce335:eq:convex-negative-curvature}
R^g(u,\bar u,v,\bar v)
=\sum_{j=1}^r\lambda_jR^{g_j}(u,\bar u,v,\bar v)
-\|\mathrm{II}(u,v)\|^2<0.
\end{equation}
Here $\mathrm{II}$ is the second fundamental form of $\delta$, with
the norm induced by the product metric. This is the diagonal case of
the argument in Lemma~\ref{ce335:lem:graph}. Nonnegative coefficients
are also allowed, provided at least one is positive, by omitting the
zero terms.

Fix $b_0\in B$. The compact fiber $M_{b_0}$ is a closed smooth
submanifold of $M$, so $\varphi_{b_0}$ extends to a real smooth function
$\Phi_{b_0}$ on $M$. For example, one may extend it using a tubular
neighborhood and multiply by a cutoff function that is identically
one near the fiber. Set
\[
\eta_{b_0}=\eta+\sqrt{-1}\partial\bar\partial\Phi_{b_0}.
\]
Its restriction to $M_{b_0}$ is the prescribed negatively curved
K\"ahler form. In local submersion coordinates, the coefficients of
the metrics induced by $\eta_{b_0}$ on the fibers, together with their
fiber derivatives, depend smoothly on the parameter. Positivity of the K\"ahler form and
strict negativity of holomorphic bisectional curvature are open
conditions. Compactness of $M_{b_0}$ therefore gives an open
neighborhood $W$ of the entire fiber on which the induced fiber
metrics satisfy both conditions. 
%More explicitly, one first retains
%positivity of the vertical Hermitian form and then uses the curvature
%formula on the compact bundles of pairs of unit vertical tangent
%vectors. The strict upper bound on the curvature along $M_{b_0}$
%persists after shrinking $W$.

Since $p$ is proper, $p(M\setminus W)$ is closed and does not contain
$b_0$. Thus
\(
U_{b_0}=B\setminus p(M\setminus W)
\)
is an open neighborhood of $b_0$ with $p^{-1}(U_{b_0})\subset W$.
It follows that $\eta_{b_0}|_{M_b}$ is K\"ahler with strictly negative
holomorphic bisectional curvature for every $b\in U_{b_0}$.

Choose finitely many such neighborhoods $U_1,\ldots,U_N$ covering
$B$, with corresponding global potentials $\Phi_1,\ldots,\Phi_N$.
Write
\[
\eta_a=\eta+\sqrt{-1}\partial\bar\partial\Phi_a.
\]
Let $\{\chi_a\}_{a=1}^N$ be a smooth nonnegative partition of unity
on $B$ subordinate to this cover, with
$\operatorname{supp}\chi_a\subset U_a$. Define
\[
\Phi=\sum_{a=1}^N(\chi_a\circ p)\Phi_a,
\qquad
\omega=\eta+\sqrt{-1}\partial\bar\partial\Phi.
\]
The form $\omega$ is real and $d$-closed. For fixed $b\in B$, the
functions $\chi_a\circ p$ are constant on $M_b$, so restriction to
the fiber gives
\begin{equation}\label{ce335:eq:relative-convex-restriction}
\begin{aligned}
\omega|_{M_b}
&=\eta|_{M_b}
 +\sqrt{-1}\partial_b\bar\partial_b
  (\sum_a\chi_a(b)\Phi_a|_{M_b})=\sum_a\chi_a(b)\eta_a|_{M_b}.
\end{aligned}
\end{equation}
Whenever $\chi_a(b)>0$, one has $b\in U_a$, and hence
$\eta_a|_{M_b}$ is a K\"ahler form with strictly negative holomorphic
bisectional curvature. The coefficients in
\eqref{ce335:eq:relative-convex-restriction} are constants on $M_b$,
are nonnegative, and sum to one. The diagonal argument above proves
the required strict negativity. 
%Notice that we have glued the
%potentials rather than the forms themselves; this preserves closedness
%on $M$ while giving a constant convex combination on each fiber.
The cohomology assertion follows directly from
\eqref{ce335:eq:relative-negative-form}.

For the last assertion, choose a smooth complex splitting
$T_M=T_{M/B}\oplus H$. In this splitting the Hermitian forms associated
with $\omega$ and $p^*\omega_B$ have block matrices
\[
\omega=\begin{pmatrix}A&D\\D^*&E\end{pmatrix},
\qquad
p^*\omega_B=\begin{pmatrix}0&0\\0&G\end{pmatrix},
\]
where $A$ and $G$ are positive definite. By compactness, their
smallest eigenvalues, measured using fixed background Hermitian
metrics, have positive lower bounds. The Schur complement of $A$ in
$\omega+k p^*\omega_B$ is
\(
E+kG-D^*A^{-1}D.
\)
All terms other than $kG$ are uniformly bounded on $M$, so this
complement is positive definite for all sufficiently large $k$.
Thus $\omega+k p^*\omega_B$ is positive definite and closed. Finally,
$p^*\omega_B$ restricts to zero on every fiber.
\end{proof}

\subsection{Single-point triple covers and their families}
\label{ce335:sec:Hurwitz}

We use the iterated Kodaira--Oort construction in the form recalled in
\cite[Lemma~3.1 and \S3]{Jab09}. We include the family version because the argument requires
an actual family of covering maps after a finite
\'etale change of parameter, together with pointwise
Kodaira--Spencer injectivity.

We record the degree-three case of the construction in
\cite[Section~5.2]{DonagiWitten2015}, together with its
immediate consequences.

\begin{lemma}\label{ce335:lem:triple-point}
Let $H$ be a smooth projective complex curve of genus $g\ge2$,
and let $x\in H$. There exists a smooth connected projective
curve $Z$ and a finite morphism
\[
r:Z\longrightarrow H
\]
of degree three, totally ramified over $x$ and unramified
over $H\setminus\{x\}$. Its monodromy group is $S_3$,
its automorphism group over $H$ is trivial, and
\[
g(Z)=3g-1.
\]
\end{lemma}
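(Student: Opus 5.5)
The approach is through the Riemann existence theorem: a degree-three cover of $H$ branched only over $x$ is the same as a transitive permutation representation of $\pi_1(H\setminus\{x\},x_0)$ into $S_3$. Fix a base point $x_0\neq x$. Then $\pi_1(H\setminus\{x\},x_0)$ is generated by $a_1,b_1,\dots,a_g,b_g$ and a loop $\gamma$ around $x$, with the single relation $\prod_{i}[a_i,b_i]=\gamma$. The local monodromy at $x$ must be a $3$-cycle, so that $x$ has a single preimage with ramification index three. Hence I need a homomorphism $\sigma:\pi_1\to S_3$ with $\sigma(\gamma)$ a $3$-cycle and transitive image.

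A $3$-cycle is a commutator in $S_3$: for instance $[(12),(13)]=(12)(13)(12)(13)$ is a $3$-cycle. I will set $\sigma(a_1)=(12)$ and $\sigma(b_1)=(13)$, and send all remaining $a_i,b_i$ to the identity; this is possible because $g\ge1$. Then $\sigma(\gamma)=[(12),(13)]$ is a $3$-cycle, and the image contains $(12)$ and $(13)$, so it is all of $S_3$. In particular the image is transitive, which makes $Z$ connected.

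By Riemann existence, and GAGA because $H$ is projective, $\sigma$ determines a smooth connected projective curve $Z$ and a finite morphism $r$ of degree three. This $r$ is unramified over $H\setminus\{x\}$, and over $x$ it is totally ramified, because the monodromy there has one orbit of length three. Its monodromy group is $\operatorname{im}\sigma=S_3$.

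The automorphism group of $r$ over $H$ is the centralizer in $S_3$ of the monodromy group, acting on the fiber over $x_0$. The centralizer of $S_3$ in $S_3$ is trivial, so $\operatorname{Aut}(Z/H)$ is trivial.

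For the genus, Riemann--Hurwitz with a single ramification point of index three gives
\[
2g(Z)-2=3(2g-2)+(3-1)=6g-4.
\]
Hence $g(Z)=3g-1$.

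The main point needing care is the standard identification of the automorphism group of the cover with the centralizer of the monodromy group, and the correct orientation of the relation in $\pi_1$. Since any commutator of two distinct transpositions is a $3$-cycle, the orientation convention does not affect the argument.
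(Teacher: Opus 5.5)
Your proof is correct. The automorphism step is the paper's argument: an automorphism over $H$ is determined by its action on an unramified fiber, it commutes with the monodromy there, and $C_{S_3}(S_3)=\{1\}$.

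The difference is in the existence, monodromy and genus assertions. The paper does not prove these; it quotes the single-branch-point construction of \cite[Section~5.2]{DonagiWitten2015} with $d=3$. You instead write down the monodromy representation explicitly. Sending $a_1,b_1$ to two distinct transpositions makes $\sigma(\gamma)=[(12),(13)]$ a $3$-cycle. You then apply Riemann existence and GAGA, and obtain $g(Z)=3g-1$ from Riemann--Hurwitz with a single point of index three.

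Your route buys a self-contained proof in which the monodromy assertion is visibly forced. Indeed, any $\sigma$ with $\sigma(\gamma)$ a $3$-cycle must have non-abelian image, since otherwise $\prod_i[\sigma(a_i),\sigma(b_i)]=1$; a non-abelian subgroup of $S_3$ is $S_3$ itself. It also shows that $g\ge1$ already suffices for this lemma. The paper's citation is shorter, and the hypothesis $g\ge2$ there matters only for the later Hurwitz-family and rigidity arguments.
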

\begin{proof}
The existence, monodromy, and genus assertions are given by
\cite[Section~5.2]{DonagiWitten2015} with $d=3$.
The automorphism assertion follows because an automorphism
over $H$ acts on a general fiber by a permutation commuting
with the monodromy group, and the centralizer of $S_3$
in $S_3$ is trivial.
\end{proof}
The following lemma is an application of the theory of Hurwitz
spaces with prescribed monodromy; see
\cite[Theorem~5.1.5, Proposition~6.5.2, and Section~6.6]
{BertinRomagny2011}.
For a convenient formulation in the Galois case, see
\cite[Theorem~4.1]{LemosTorzewski2023}.

\begin{lemma}\label{ce335:lem:Hurwitz-family}
Let $\pi:\mathscr H\to U$ be a smooth projective family of
connected curves of genus $g\ge2$ over a smooth connected
projective complex variety $U$, and let $s:U\to\mathscr H$
be a section.
There exist a smooth connected projective variety $U'$,
a finite \'etale surjection $\tau:U'\to U$, and a finite
flat morphism of degree three
\[
\widetilde r:\mathscr Z\longrightarrow\mathscr H',
\quad
\mathscr H':=\mathscr H\times_U U'=\left\{\left(h, u^{\prime}\right) \in \mathscr{H} \times U^{\prime} \mid \pi(h)=\tau\left(u^{\prime}\right)\right\},
\]
such that $\mathscr Z\to U'$ is a smooth projective family
of connected curves.

Let $s':U'\to\mathscr H'$ be the pullback of $s$. For every
$u'\in U'$, the induced cover
\(
r_{u'}: Z_{u'} \to H_{\tau(u')}
\)
has monodromy group $S_3$, is totally ramified over
$s(\tau(u'))$, and is unramified elsewhere.
Moreover, the reduced ramification divisor
\(
\mathcal R
:=
\bigl(\widetilde r^{-1}(s'(U'))\bigr)_{\mathrm{red}}
\)
maps isomorphically to $s'(U')$. In particular,
$\mathcal R$ is the image of a section of
$\mathscr Z\to U'$.
\end{lemma}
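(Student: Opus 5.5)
The approach is to build the family from a relative Hurwitz space of pointed covers, with the family $\pi:\mathscr H\to U$ and the section $s$ as base data. Fix a point $u_0\in U$ and a cover $r_0:Z_0\to H_0:=H_{u_0}$ from Lemma~\ref{ce335:lem:triple-point}, totally ramified over $s(u_0)$, with monodromy $S_3$. Such covers correspond to conjugacy classes of surjections $\pi_1(H_0\setminus\{s(u_0)\})\to S_3$ that send a small loop around the puncture to a $3$-cycle. Since $S_3$ has trivial center and the covers have no automorphisms over $H$ by Lemma~\ref{ce335:lem:triple-point}, the functor of such covers over the family $(\mathscr H\setminus s(U))/U$ is representable. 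By \cite[Theorem~5.1.5, Proposition~6.5.2, Section~6.6]{BertinRomagny2011}, it is represented by a finite \'etale cover $\mathcal H ur\to U$. Its fiber over $u$ is the finite set of such monodromy classes, and $\pi_1(U,u_0)$ acts on this set through the monodromy of the punctured family. I take $U'$ to be the connected component of $\mathcal H ur$ containing the point $[r_0]$ and $\tau$ the restriction of the structure map. Then $U'$ is a smooth connected variety, finite \'etale over the projective variety $U$, hence projective by properness and finiteness.

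Over $U'$ the universal object is a finite \'etale degree-three cover of the open family $\mathscr H'\setminus s'(U')$. Normalizing $\mathscr H'$ in it gives $\widetilde r:\mathscr Z\to\mathscr H'$. The next step is to check local structure near $s'(U')$. In a local relative coordinate $t$ with $s'=\{t=0\}$, the étale cover near the punctured disc bundle is given by $w^3=t$, because the local monodromy is a $3$-cycle and is constant in the connected family. So $\mathscr Z$ is smooth there, and $\widetilde r$ is finite. Since $\mathscr Z$ and $\mathscr H'$ are smooth of the same dimension, miracle flatness makes $\widetilde r$ flat. The fibers $Z_{u'}$ are the smooth compactifications of the fiber covers, so $\mathscr Z\to U'$ is smooth and projective. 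Connectedness of the fibers follows from transitivity of the monodromy group $S_3$. Fiberwise the covers are totally ramified exactly over $s(\tau(u'))$, with monodromy $S_3$, because the monodromy class is locally constant in $U'$.

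Finally, the local model $w^3=t$ shows that $\widetilde r^{-1}(s'(U'))_{\mathrm{red}}=\{w=0\}$, and $\widetilde r$ restricted to it is $(u',w=0)\mapsto(u',t=0)$. So $\widetilde r|_{\mathcal R}:\mathcal R\to s'(U')$ is a bijective local isomorphism, hence an isomorphism. Composing its inverse with $s'$ gives a section of $\mathscr Z\to U'$ with image $\mathcal R$.

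I expect the main obstacle to be the representability and finite-\'etaleness of the relative Hurwitz space over the whole projective base $U$, together with the smooth extension over the branch section. This is where the cited Hurwitz-space theory does the real work, and the trivial-automorphism assertion of Lemma~\ref{ce335:lem:triple-point} is what removes stack issues. Everything after that is a local computation near $s'(U')$.
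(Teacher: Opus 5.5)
Your proposal is correct and is essentially the paper's argument: a Bertin--Romagny relative Hurwitz scheme, representable and finite \'etale over $U$ because covers with monodromy $S_3$ have no automorphisms, then a connected component $U'$, then the local model $w^3=t$ along the marked section, which gives smoothness, flatness and $\mathcal R\simeq s'(U')$. The paper differs only in small ways: it reads flatness off the free basis $1,w,w^2$ rather than miracle flatness, and it uses the universal proper covers directly rather than normalizing. The one step you left implicit is the surjectivity of $\tau$, which the statement requires; it follows because a finite \'etale map has open and closed image and $U$ is connected.
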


\begin{proof}
For a smooth projective family of curves of genus at least two
over a complex variety $U$, with pairwise disjoint marked sections,
fix the degree and monodromy type of the covers, together with
their ramification at the marked sections, and require them to
be unramified elsewhere. By
\cite[Proposition~6.5.2(ii), Definition~6.6.5, and
Theorem~6.6.6]{BertinRomagny2011}, the corresponding relative Hurwitz stack
is proper, quasi-finite, and \'etale over $U$. If every cover has
trivial automorphism group over its target, this stack is
represented by a finite \'etale scheme $T \to U$ carrying
a universal family of covering maps. Isomorphisms here are
required to induce the identity on the target curve.

Apply this result to $(\mathscr H \to U,s)$ and to connected
degree-three covers with monodromy group $S_3$, ramification
type $(3)$ at the marked point, and no other branch points.
Thus each cover has exactly one point above the marked point,
where its local analytic equation is $z=w^3$.
Every such cover $r_u \colon Z_u \to H_u$ has trivial
automorphism group over $H_u$: an automorphism acts faithfully
on an unramified fiber and commutes with monodromy, so
\(
  \operatorname{Aut}_{H_u}(Z_u)
  \hookrightarrow C_{S_3}(S_3)=\{1\}.
\)
Consequently, there is a finite \'etale scheme $T \to U$
carrying a universal family of these covers. Its fiber over
$u$ parametrizes their isomorphism classes over $H_u$ and
is nonempty by Lemma~\ref{ce335:lem:triple-point}.
Choose a nonempty connected component $U'$ of $T$.
The restriction $\tau \colon U' \to U$ is finite \'etale,
so its image is both open and closed. Since $U$ is connected,
$\tau$ is surjective. Moreover, $U'$ is smooth because $\tau$
is \'etale, and projective because $\tau$ is finite and $U$
is projective.

Restricting the universal family to $U'$ gives
\(
\widetilde r:\mathscr Z\to\mathscr H'.
\)
By construction, its fibers are the prescribed connected
covers. We spell out the local behavior along the marked
section.
Write $n=\dim U'$. Near any point of $s'(U')$, choose
holomorphic coordinates $(z,t_1,\ldots,t_n)$ such that
the projection to $U'$ is given by $t=(t_1,\ldots,t_n)$
and the marked section is $z=0$. The prescribed
ramification type gives the local form
\(
(w,t_1,\ldots,t_n)
\mapsto
(w^3,t_1,\ldots,t_n).
\)
In these coordinates, the projection $\mathscr Z\to U'$ is
given by $(w,t)\mapsto t$, so it is smooth near the ramification
locus. Away from this locus, $\widetilde r$ is \'etale;
composing it with the smooth projection $\mathscr H'\to U'$
shows that $\mathscr Z\to U'$ is smooth there as well.

The same local model proves flatness. If
\(
A=\mathbb C\{z,t_1,\ldots,t_n\},
\)
then the local algebra of the cover is
\(
A[w]/(w^3-z),
\)
which is free over $A$ with basis $1,w,w^2$.
Away from the marked section, the map is finite \'etale
of degree three. Hence $\widetilde r$ is finite flat
of degree three everywhere.

Since $\mathscr H'$ is projective and $\widetilde r$
is finite, $\mathscr Z$ is projective. It follows that
$\mathscr Z\to U'$ is a smooth projective family with
connected fibers.

Finally, in the displayed local coordinates the reduced
ramification divisor is $w=0$, and its map to the marked
section is
\(
(0,t)\mapsto(0,t).
\)
These local isomorphisms show that
\(
\widetilde r|_{\mathcal R}:
\mathcal R\xrightarrow{\sim}s'(U').
\)
Composing with $s'(U')\xrightarrow{\sim}U'$ proves that
$\mathcal R$ is a section of $\mathscr Z\to U'$.
\end{proof}

\subsection{A negatively curved threefold with two fibrations}
\label{ce335:sec:tower}

\subsubsection*{The first family and its negative section}

Let $C$ be the smooth projective curve obtained by completing
the affine curve
\begin{equation}\label{ce335:eq:base-curve}
C:\quad y^2=x^6-1.
\end{equation}
The coordinate $x$ defines a holomorphic map
\(
p:C\to\mathbb P^1.
\)
For a general value of $x$, the equation has two distinct
solutions for $y$, so $p$ has degree two. The two solutions
coincide precisely when $x^6=1$. Since these six roots are simple, the double cover has
ramification index two over each of them. To examine the
points above infinity, set $u=1/x$ and $v=y/x^3$.
The equation becomes $v^2=1-u^6$, giving two points
$(u,v)=(0,\pm1)$ above infinity. At both points, $u$ is
a local coordinate, so the map to $\mathbb P^1$ is
unramified there.
Since $\mathbb P^1$ has genus zero, the Riemann--Hurwitz
formula \cite[Tag~0C1F]{Stacks} gives
\[
2g(C)-2
=2\bigl(2g(\mathbb P^1)-2\bigr)+6(2-1)
=-4+6=2.
\]
Hence $g(C)=2$.

Consider the second projection
\(
\operatorname{pr}_2:C\times C\to C.
\)
Its fiber over $c\in C$ is $C\times\{c\}$. The diagonal
section $c\mapsto(c,c)$ marks the point $c$ on this copy of $C$.
Applying Lemma~\ref{ce335:lem:Hurwitz-family} to this
family gives a finite \'etale cover
\(
\nu:B_0\to C
\)
and a finite flat morphism of degree three
\begin{equation}\label{ce335:eq:first-cover}
A=(a,q):S\longrightarrow C \times B_0\simeq (C \times C) \times_C B_0.
\end{equation}
Here $a$ and $q$ are the compositions of $A$ with the
projections to $C$ and $B_0$, respectively. The curve
$B_0$ and the surface $S$ are smooth, connected, and
projective.

The projection $q:S\to B_0$ is a smooth family of connected
curves. For each $t\in B_0$, the restriction
\(
a_t:S_t:=q^{-1}(t)\to C
\)
is a degree-three cover, totally ramified over $\nu(t)$
and unramified elsewhere. Since $g(C)=2$, the
Riemann--Hurwitz formula gives
$g(S_t)=3g(C)-1=5$.

As $t$ varies, the branch point traces out the graph
\[
\Gamma=\{(\nu(t),t):t\in B_0\}\subset C\times B_0,
\]
which is therefore the branch divisor of $A$.
By Lemma~\ref{ce335:lem:Hurwitz-family}, the unique ramification
point in each fiber varies holomorphically and defines a
section of $q:S\to B_0$. Denote its image by $R\subset S$; this is the
reduced ramification curve, and
$q|_R:R\to B_0$ is an isomorphism.
Since the ramification index along $R$ is three,
\begin{equation}\label{ce335:eq:triple-divisor}
A^*\Gamma=3R.
\end{equation}

Identify $\Gamma$ with $B_0$ via the graph embedding
$t\mapsto(\nu(t),t)$. The normal bundle of the graph is then
\(
N_{\Gamma/(C\times B_0)}\simeq \nu^*T_C.
\)
Moreover, $A$ maps the ramification curve $R$ isomorphically
onto $\Gamma$, and both curves are identified with $B_0$
by projection to the second factor.

To compute the normal bundle of $R$ in $S$, recall that
\(
N_{R/S}\simeq \mathcal O_S(R)|_R.
\)
\eqref{ce335:eq:triple-divisor} gives an isomorphism of line bundles
\(
\mathcal O_S(3R)
\simeq A^*\mathcal O_{C\times B_0}(\Gamma).
\)
Restricting this isomorphism to $R$, and using
$\mathcal O_{C\times B_0}(\Gamma)|_\Gamma
\simeq N_{\Gamma/(C\times B_0)}$, we obtain
\begin{equation}\label{ce335:eq:R-normal}
N_{R/S}^{\otimes3}
\simeq (A|_R)^*N_{\Gamma/(C\times B_0)}
\simeq \nu^*T_C,
\end{equation}
where the last isomorphism uses the identification $R\simeq B_0$.
The self-intersection number $R^2$ equals the degree of
$N_{R/S}$. Taking degrees in \eqref{ce335:eq:R-normal} therefore gives
\[
3R^2
=\deg(\nu^*T_C)
=(\deg\nu)\deg T_C
=(\deg\nu)(2-2g(C)).
\]
Since $g(C)=2$, it follows that
\begin{equation}\label{negative of R^2}
R^2=-\frac{2\deg\nu}{3}<0.
\end{equation}

We next show that the family $q:S\to B_0$ is effectively
parametrized, meaning that its Kodaira--Spencer map is
injective at every point. Fix $t\in B_0$ and suppose that
$v\in T_tB_0$ satisfies $\rho_{q,t}(v)=0$. Choose a local
parameter curve $t(s)$ in $B_0$ with $t(0)=t$ and $t'(0)=v$.

The vanishing of the Kodaira--Spencer class allows us to
identify the varying curves $S_{t(s)}$ with $S_t$ to first
order in $s$. Since the target curve $C$ is fixed, the
first-order variation of the covering maps
$a_{t(s)}:S_{t(s)}\to C$ then defines a holomorphic section
\(
\dot a\in H^0(S_t,a_t^*T_C).
\)
This section must vanish: indeed,
\(
\deg(a_t^*T_C)=3(2-2g(C))=-6<0,
\)
so $a_t^*T_C$ has no nonzero holomorphic sections.

To see what this implies for the branch value, let $p(s)$
be the unique ramification point of $a_{t(s)}$, and write
$p=p(0)$. By construction,
\(
a_{t(s)}(p(s))=\nu(t(s)).
\)
Differentiating this identity using the first-order
identification above gives
\[
d\nu_t(v)
=\dot a(p)+(da_t)_p(\dot p)=0.
\]
Here $\dot a=0$ by the preceding argument, and
$(da_t)_p=0$ because $p$ is a ramification point.
Since $\nu$ is \'etale, $d\nu_t$ is an isomorphism.
Consequently, $v=0$, proving that $\rho_{q,t}$ is injective.

Finally, the Riemann--Hurwitz formula for the unramified
cover $\nu:B_0\to C$ gives
\[
2g(B_0)-2
=(\deg\nu)(2g(C)-2)
=2\deg\nu.
\]
Thus $g(B_0)=1+\deg\nu\ge2$.
Consequently, Theorem~\ref{ce335:thm:curve-input} applied to $q$, gives a K\"ahler
metric of strictly negative holomorphic bisectional curvature on $S$.

\subsubsection*{The second family}

Consider the pointed family
\(
S\times_{B_0}S\to S,
\)
where the projection is to the second factor and the section is the
diagonal. Lemma~\ref{ce335:lem:Hurwitz-family} gives a
finite \'etale surjection
\(
\eta:T\to S
\)
and a degree-three morphism
\(
\mathcal B:Y\to S\times_{B_0}T.
\)
Write
$
p=\operatorname{pr}_T\circ\mathcal B:Y\to T,
$ $
b=\operatorname{pr}_S\circ\mathcal B:Y\to S.
$
Here $T$ is a smooth connected projective surface, $Y$ is a smooth
connected projective threefold, and $p$ is smooth with connected
fibers of genus $3\cdot5-1=14$ by Lemma \ref{ce335:lem:triple-point} and $g(S_t)=5$. For each $t\in T$,
\(
b_t:Y_t\to S_{q(\eta(t))}
\)
is totally ramified over $\eta(t)$ and unramified elsewhere.
For any $y\in Y$, $\mathcal{B}(y)=(b(y),p(y))\in S\times_{B_0}T$, by definition of $S\times_{B_0}T$, one has
$
q\circ b=(q\circ\eta)\circ p.
$

We now prove that $p:Y\to T$ is effectively parametrized.
Set $k:=q\circ\eta:T\to B_0$.
The maps satisfy $q\circ b=k\circ p$.

Fix $t\in T$, write $\beta=k(t)$, and suppose that
$\xi\in T_tT$ satisfies $\rho_{p,t}(\xi)=0$.
We must show that $\xi=0$.

First, we show that $dk_t(\xi)=0$. Applying
\eqref{ce335:eq:KS-naturality} to the degree-three covering
$b_t:Y_t\to S_\beta$ gives
\[
b_t^*\bigl(\rho_{q,\beta}(dk_t(\xi))\bigr)
=
H^1(db_t)\bigl(\rho_{p,t}(\xi)\bigr)
=0
\quad\text{in }H^1(Y_t,b_t^*T_{S_\beta}).
\]
Here $H^1(db_t)$ is induced by the differential
$T_{Y_t}\to b_t^*T_{S_\beta}$.
By Lemma~\ref{ce335:lem:trace-KS}, the pullback map $b_t^*$
is injective. Hence
\(
\rho_{q,\beta}(dk_t(\xi))=0.
\)
Since $q$ is effectively parametrized, its
Kodaira--Spencer map is injective, therefore
\(
dk_t(\xi)=0.
\)

It remains to examine the motion of the branch value.
Choose a local parameter curve $t(s)$ in $T$ with
$t(0)=t$ and $t'(0)=\xi$.
Since $dk_t(\xi)=0$, the parameter $k(t(s))$ is equal
to $\beta$ to first order. Thus the target curves
$S_{k(t(s))}$ are identified with the fixed curve
$S_\beta$ to first order. Moreover,
\[
dq_{\eta(t)}\bigl(d\eta_t(\xi)\bigr)=dk_t(\xi)=0,
\]
so the velocity $d\eta_t(\xi)$ of the branch value
lies in $T_{\eta(t)}S_\beta$.

The assumption $\rho_{p,t}(\xi)=0$ also allows us
to identify $Y_{t(s)}$ with $Y_t$ to first order.
Under these identifications, the first-order variation
of the covering maps defines a holomorphic section
\(
\dot b\in H^0(Y_t,b_t^*T_{S_\beta}).
\)
This section vanishes by \eqref{ce335:eq:map-rigidity},
since
\(
\deg(b_t^*T_{S_\beta})
=3(2-2g(S_\beta))=-24<0.
\)

Let $P(s)\in Y_{t(s)}$ be the unique ramification
point, and put $P=P(0)$. Its image is the marked
branch value:
\(
b_{t(s)}(P(s))=\eta(t(s)).
\)
Differentiating this identity using the first-order
identifications above gives
\[
d\eta_t(\xi)
=\dot b(P)+(db_t)_P(\dot P)=0.
\]
Indeed, $\dot b=0$, and $(db_t)_P=0$ because $P$
is a ramification point. Finally, $\eta$ is \'etale,
so $d\eta_t$ is an isomorphism. It follows that
$\xi=0$, proving that $\rho_{p,t}$ is injective.

The surface $T$ inherits a negatively curved K\"ahler metric from
$S$ by \'etale pullback. Applying Theorem~\ref{ce335:thm:curve-input} to $p$
therefore gives a K\"ahler metric of strictly negative holomorphic
bisectional curvature on $Y$.

% Required packages:
% \usepackage{amsmath,amssymb}
% \usepackage{tikz-cd}

\subsubsection*{The transverse surface fibration}

Define
\begin{equation}\label{ce335:eq:f-D-W}
f=a\circ b:Y\longrightarrow C,
\quad
D=\eta^{-1}(R)\subset T,
\quad
W=p^{-1}(D)\subset Y.
\end{equation}
Since $\eta$ is \'etale and $R$ is smooth, the divisor $D$ is a nonempty disjoint union of smooth curves.
For each connected component $D_i$, \'etale pullback and \eqref{negative of R^2} gives
\begin{equation}\label{ce335:eq:D-negative}
N_{D_i/T}\simeq(\eta|_{D_i})^*N_{R/S},
\quad
D_i^2=\deg(D_i\to R)\,R^2<0.
\end{equation}
Since $p$ is smooth, $W$ is a smooth reduced divisor, possibly
disconnected.

\begin{lemma}\label{ce335:lem:transversality}
Both $f:Y\to C$ and $f|_W:W\to C$ are submersions.
Furthermore,
\[
(p,f):Y\longrightarrow T\times C
\]
is a finite morphism of degree nine.
\end{lemma}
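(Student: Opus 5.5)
Our plan is to work locally and factor each map through the two triple covers. We have $f=a\circ b$, where $b:Y\to S$ and $a:S\to C$. The morphism $(p,f)$ factors as
\[
Y\xrightarrow{\ \mathcal B\ } S\times_{B_0}T\xrightarrow{\ (a,\mathrm{id})\ } (C\times B_0)\times_{B_0}T\simeq C\times T .
\]
The first map is $\mathcal B$, which is finite flat of degree three by Lemma~\ref{ce335:lem:Hurwitz-family}. The second map is the base change of $A=(a,q):S\to C\times B_0$ along $k=q\circ\eta:T\to B_0$. Indeed, $S\times_{B_0}T=S\times_{C\times B_0}(C\times T)$, and under this identification the map to $C\times T$ is $(a\circ\mathrm{pr}_S,\mathrm{pr}_T)$. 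Since $A$ is finite flat of degree three, so is its base change. The composite is therefore finite of degree $3\cdot3=9$, and after swapping factors this is $(p,f)$. This settles the last assertion.

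For the submersion claims we argue pointwise, using the local models from Lemma~\ref{ce335:lem:Hurwitz-family}. First, $a:S\to C$ is a submersion. Away from $R$, the map $A$ is étale, so $dA$ is an isomorphism onto $T(C\times B_0)$ and $da=\mathrm{pr}_C\circ dA$ is surjective. At a point of $R$, the local form is $(w,t)\mapsto(w^3,t)$ in coordinates where $\Gamma=\{z=0\}$. Here $z$ is a fiber coordinate on $C\times B_0\to B_0$ centered on the graph, so we can write $z=x-x(\nu(t))$ with $x$ a local coordinate on $C$. Hence $a(w,t)$ has coordinate $x=w^3+\nu(t)$, and $\partial x/\partial t=\nu'(t)\neq0$ because $\nu$ is étale. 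So $a$ is a submersion along $R$ as well; the moving branch point is exactly what supplies the needed direction.

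The same argument applies one level up: $b:Y\to S$ is a submersion. Away from the ramification locus $\mathcal R_Y$ of $\mathcal B$, the map $\mathcal B$ is étale and $S\times_{B_0}T\to S$ is smooth, since it is the base change of $k$ and $k$ is étale. Along $\mathcal R_Y$, the local form $(w,t)\mapsto(w^3+\text{coord of }\eta(t),t)$ in a fiber coordinate of $S\to B_0$ shows that $db$ hits the fiber direction of $S_\beta$, via $d\eta_t$ restricted to $\ker dk_t$, and hits the base direction via $dk$. Thus $f=a\circ b$ is a composition of submersions.

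The statement for $f|_W$ is the step that needs the most care. Here $W=p^{-1}(D)$ and $D=\eta^{-1}(R)$, so $b(W)$ is the set of points of $S$ lying over $\eta(D)\subset R$. Consider a point $y\in W$ with $t=p(y)\in D_i$. The tangent space $T_yW$ contains $T_yY_t$ together with lifts of $T_tD_i$. We treat two cases.

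\emph{Case 1: $b_t$ is unramified at $y$.} Then $d b_t(T_yY_t)=T_{b(y)}S_\beta$. We then need $da$ to be nonzero on the fiber direction $T S_\beta$ at $b(y)$. This holds unless $b(y)\in R$, i.e.\ unless $b(y)=\eta(t)$ is the ramification point of $a_\beta$.

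\emph{Case 2: $b(y)$ is a ramification point of $a_\beta$, i.e.\ $b(y)=\eta(t)\in R$.} Since $b(y)\in R$, fiber directions die under $da$. Instead we use the direction along $D_i$: a curve $t(s)$ in $D_i$ lifts through $W$, and its image under $f$ has derivative $d\nu(dk(\dot t))$, because $a|_R$ corresponds to $\nu$ under $R\simeq B_0$. Now $dk$ restricted to $TD_i$ is $dq\circ d\eta$ on $TR$, which is an isomorphism since $\eta$ is étale and $q|_R$ is an isomorphism. Hence this derivative is nonzero.

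To make Case 2 rigorous we still have to produce the lift. We can choose it inside $W\cap b^{-1}(R)$, which contains the ramification section $P(t)$ for $t\in D_i$, because there $b(P(t))=\eta(t)\in R$. When $y$ is not on that section, $b_t$ is étale at $y$, and we can lift $t(s)$ through the étale structure of $\mathcal B$ over $R\times_{B_0}D_i$. The remaining bookkeeping is the local coordinate verification that these lifts exist; this follows from the local models, and we expect it to be routine.
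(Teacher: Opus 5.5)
Your proof is correct. For $a$, $b$ and $f$ being submersions it is essentially the paper's argument: both use the local models $a=t+z^3$ along $R$ and $b(u,x,t)=(x+u^3,t)$ along the second ramification section, together with étaleness of $\mathcal B$ and smoothness of $S\times_{B_0}T\to S$ away from these loci.

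Two steps take a different route.
\begin{itemize}
\item \emph{Finiteness and degree.} You factor $(f,p)$ as $\mathcal B$ followed by the base change of $A$ along $\operatorname{id}_C\times k$, using $S\times_{B_0}T\simeq S\times_{C\times B_0}(C\times T)$. This gives finite flat of degree $3\cdot 3=9$, and hence surjectivity, at once. The paper instead notes that every fiber of $(p,f)$ is finite, since $f|_{Y_t}$ is the composite $Y_t\to S_\beta\to C$. It then uses properness to get finiteness and reads the degree off a general fiber. Your factorization is more structural and yields flatness for free; the paper's argument needs only quasi-finiteness plus properness.
\item \emph{$f|_W$ at the ramification section.} The paper writes the nested model $f|_W=t+u^9$. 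You instead observe that $f\circ\sigma_2|_{D_i}=a\circ\eta|_{D_i}=\nu\circ k|_{D_i}$ is étale. This is the coordinate-free content of the same computation.
\end{itemize}

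There are two minor slips.
\begin{itemize}
\item $k=q\circ\eta:T\to B_0$ is not étale, since it maps a surface to a curve. It is smooth of relative dimension one, and that is all you need for $S\times_{B_0}T\to S$ to be smooth.
\item The ``bookkeeping'' at the end of Case~2 is unnecessary, and the justification you sketch there would not work. Suppose $y\in W$ and $b(y)\in R$, and put $t=p(y)$. Then $b(y)$ and $\eta(t)$ both lie in $R\cap S_{k(t)}$, which is a single point, so $b(y)=\eta(t)$. Total ramification of $b_t$ over $\eta(t)$ then forces $y=\sigma_2(t)$.
\end{itemize}
So the subcase ``$y$ not on the section'' is empty. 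Lifting ``through the étale structure of $\mathcal B$ over $R\times_{B_0}D_i$'' would be meaningless in any case, because $\mathcal B$ is totally ramified over exactly that locus. The required lift is simply $s\mapsto\sigma_2(t(s))$. The same observation shows that your two cases exhaust $W$: a point of $W$ where $b_t$ ramifies automatically has $b(y)\in R$.
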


\begin{proof}
Near a point of $R$, use $\nu$ to choose the coordinate $t$ on $B_0$
from a coordinate on $C$. The first cover then has the form
\begin{equation}\label{ce335:eq:local-first}
q(z,t)=t,\quad a(z,t)=t+z^3,\quad R=(z=0).
\end{equation}
Thus $a$ is a submersion along $R$; away from $R$, the map $A$ is
\'etale and $a$ is again a submersion.

Because $\eta$ is \'etale, write the parameter point on $T$ locally as
$(x,t)$, with $\eta(x,t)=(x,t)$ in the corresponding coordinates on
$S$. Near the ramification of the second cover, its local form is
\begin{equation}\label{ce335:eq:local-second}
p(u,x,t)=(x,t),\qquad b(u,x,t)=(x+u^3,t).
\end{equation}
This shows that $b$ is a submersion at its ramification points.
Elsewhere, $\mathcal B$ is \'etale and the projection
$S\times_{B_0}T\to S$ is smooth. Hence $b$ is a submersion everywhere,
and so is $f=a\circ b$.

At a point of $W$ mapping into $R$, the unique point above the marked
point is the ramification point of the second cover. Combining
\eqref{ce335:eq:local-first} and \eqref{ce335:eq:local-second}, we obtain
\begin{equation}\label{ce335:eq:nested-model}
f(u,x,t)=t+(x+u^3)^3,\quad W=(x=0),
\quad f|_W(u,t)=t+u^9.
\end{equation}
In particular, $f|_W$ is a submersion there. At any other point of $W$,
the map $b$ is unramified in the $p$-fiber direction and its image is
outside $R$. The derivative of $a$ along the corresponding $q$-fiber
is then nonzero. Since the $p$-fiber direction lies in $T_W$, this
proves the remaining cases.

Finally, on each curve $Y_t$, the map $f$ is the composite
\[
Y_t\xrightarrow{\ b_t\ }S_{q(\eta(t))}
\xrightarrow{\ a_{q(\eta(t))}\ }C
\]
of two degree-three finite maps. Therefore every fiber of $(p,f)$ is
finite and its generic degree is nine. Properness implies that
$(p,f)$ is finite, and the same description shows that it is
surjective.
\end{proof}
The maps in the construction fit into the following
commutative diagram.
\[
\begin{tikzcd}[column sep=2.6em,row sep=4em]
Y
  \arrow[r,"\mathcal B"']
  \arrow[rr,bend left=14,"b"]
  \arrow[rrrr,bend left=20,"f=a\circ b"]
  \arrow[d,shift left=1.2ex,"p"]
&
S\times_{B_0}T
  \arrow[r,"\operatorname{pr}_S"']
  \arrow[dl,"\operatorname{pr}_T"']
&
S
  \arrow[r,"{A=(a,q)}"']
  \arrow[rr,bend left=14,"a"]
  \arrow[d,shift left=1.2ex,"q"]
&
C\times B_0
  \arrow[r,"\operatorname{pr}_C"']
  \arrow[dl,"\operatorname{pr}_{B_0}"']
&
C
\\
T
  \arrow[u,dashed,shift left=1.2ex,"\sigma_2"]
  \arrow[urr,dashed,"\eta"']
  \arrow[rr,"q\circ\eta"']
&&
B_0
  \arrow[u,dashed,shift left=1.2ex,"\sigma_1"]
  \arrow[urr,dashed,"\nu"']
\end{tikzcd}
\]

The solid arrows form a commutative diagram.
The dashed arrows are holomorphic maps recording
the ramification sections and their branch values.
Here $\sigma_1$ and $\sigma_2$ are the sections
determined by the unique ramification point in
each fiber of the first and second covers,
respectively. Their relations are
\[
\begin{aligned}
q\circ\sigma_1&=\operatorname{id}_{B_0},
&
a\circ\sigma_1&=\nu,&
p\circ\sigma_2&=\operatorname{id}_T,
&
b\circ\sigma_2&=\eta.
\end{aligned}
\]
Equivalently,
\[
A\circ\sigma_1=(\nu,\operatorname{id}_{B_0}),
\qquad
\mathcal B\circ\sigma_2=(\eta,\operatorname{id}_T).
\]

\subsubsection*{Connectedness and its persistence under parameter covers}

\begin{lemma}\label{ce335:lem:prime-pi1}
A ramified finite morphism of prime degree between smooth connected
projective curves induces a surjection on topological fundamental
groups.
\end{lemma}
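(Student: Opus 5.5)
Let $r:Z\to H$ be a ramified finite morphism of prime degree $\ell$ between smooth connected projective curves. Choose a base point $z_0\in Z$ lying over a non-branch point $h_0$, and write $\Gamma=r_*\pi_1(Z,z_0)\subset\pi_1(H,h_0)$. The plan is to factor $r$ through the covering space determined by $\Gamma$ and then use primality of $\ell$ to show $\Gamma=\pi_1(H,h_0)$.

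\emph{Step 1: the index of $\Gamma$ divides $\ell$.} Let $H^\circ$ be the complement of the branch locus and $Z^\circ=r^{-1}(H^\circ)$, so that $r^\circ:Z^\circ\to H^\circ$ is an unramified connected covering of degree $\ell$. Let $\epsilon:\widetilde H\to H$ be the connected topological covering corresponding to $\Gamma$. It has finite index, since $\Gamma$ contains the image of the finite-index subgroup $r_*\pi_1(Z^\circ)\subset\pi_1(H^\circ)$ under the surjection $\pi_1(H^\circ)\to\pi_1(H)$. Pull back the complex structure so that $\widetilde H$ is a compact Riemann surface. The lifting criterion gives a continuous lift $\widetilde r:Z\to\widetilde H$ with $\epsilon\circ\widetilde r=r$; it is holomorphic because $\epsilon$ is a local biholomorphism. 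Degrees multiply, so $\ell=\deg\widetilde r\cdot\deg\epsilon$. Since $\ell$ is prime, either $\deg\epsilon=1$, which is the claim, or $\deg\epsilon=\ell$ and $\deg\widetilde r=1$.

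\emph{Step 2: excluding $\deg\epsilon=\ell$.} In that case $\widetilde r$ is a degree-one holomorphic map of compact connected Riemann surfaces, hence a biholomorphism. Then $r=\epsilon\circ\widetilde r$ is a topological covering map, i.e.\ unramified. This contradicts the hypothesis that $r$ is ramified, so $\Gamma=\pi_1(H,h_0)$.

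The argument is elementary. The only point requiring care is Step 1: one must ensure that $\widetilde H$ is compact and that the lift is holomorphic of well-defined degree. Both follow because $\widetilde H\to H$ is a finite-sheeted covering, and finiteness follows from the comparison with $Z^\circ\to H^\circ$ described above. Alternatively, one could avoid the covering construction by working with the monodromy of $r^\circ$: a proper subgroup $\Gamma$ would force $r$ to factor nontrivially, and prime degree leaves only the unramified factorization. The covering-space route is more direct, so I would use it.
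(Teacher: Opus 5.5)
Your argument is correct and takes the same route as the paper: factor $r$ through the finite covering of $H$ associated with $r_*\pi_1(Z)$, then use multiplicativity of degrees and primality of $\ell$. The paper quotes this factorization from Bedoya--Gon\c{c}alves, whereas you construct it directly via the lifting criterion and state explicitly that ramification rules out the case $\deg\widetilde r=1$.
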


\begin{proof}
A ramified covering of closed connected surfaces that
does not induce a surjection on fundamental groups
admits a factorization into two coverings of degrees
greater than one; see
\cite[pp.~1715 and~1719]{BedoyaGoncalves2010}.
Since degrees multiply under composition, such a
factorization is impossible when the original degree
is prime. The conclusion follows.
\end{proof}

\begin{lemma}\label{ce335:lem:connected-fibers}
The submersion $f:Y\to C$ has connected fibers. Moreover, if
$\tau:T'\to T$ is any connected finite \'etale cover and
\[
Y'=Y\times_TT',\qquad f'=f\circ\operatorname{pr}_Y,
\]
then $Y'$ is connected and $f':Y'\to C$ has connected fibers.
\end{lemma}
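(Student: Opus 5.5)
The plan is to reduce connectedness of the fibers of $f'$ to a surjectivity statement on fundamental groups, and to get that surjectivity from a single curve fiber of $p'$ using Lemma~\ref{ce335:lem:prime-pi1}. The first assertion is the special case $\tau=\operatorname{id}_T$, so I will argue directly for an arbitrary connected finite \'etale cover $\tau:T'\to T$.

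\emph{Connectedness of $Y'$.} The projection $p'=\operatorname{pr}_{T'}:Y'=Y\times_TT'\to T'$ is the base change of the smooth projective morphism $p$. It is therefore a proper submersion, and its fiber over $t'$ is $Y_{\tau(t')}$, which is connected. Since $T'$ is connected and $p'$ is a proper surjective submersion with connected fibers, the space $Y'$ is connected. The projection $\operatorname{pr}_Y:Y'\to Y$ is finite \'etale, so $Y'$ is a smooth projective threefold. Moreover $f'=f\circ\operatorname{pr}_Y$ is a proper submersion onto $C$ by Lemma~\ref{ce335:lem:transversality}.

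\emph{Surjectivity on $\pi_1$ from one curve.} Fix $t'\in T'$ and put $t=\tau(t')$ and $\beta=q(\eta(t))$. Under the identification $p'^{-1}(t')\simeq Y_t$, the restriction of $f'$ is
\[
f|_{Y_t}=a_\beta\circ b_t:\;Y_t\xrightarrow{\;b_t\;}S_\beta\xrightarrow{\;a_\beta\;}C .
\]
Each factor is a finite morphism of degree three, the prime degree, between smooth connected projective curves. Each factor is also ramified, being totally ramified over one point. By Lemma~\ref{ce335:lem:prime-pi1}, both factors induce surjections on $\pi_1$, so $\pi_1(Y_t)\to\pi_1(C)$ is surjective. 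This map factors through $\pi_1(Y')$ via the inclusion of the fiber. Hence $f'_*:\pi_1(Y')\to\pi_1(C)$ is surjective.

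\emph{Conclusion.} By Ehresmann's theorem, $f':Y'\to C$ is a locally trivial fibration. Its homotopy exact sequence
\[
\pi_1(Y')\xrightarrow{f'_*}\pi_1(C)\to\pi_0(F)\to\pi_0(Y')=0
\]
then shows that the fiber $F$ is connected.

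Equivalently, one can argue by Stein factorization. Write $f'=g\circ\tilde f$ with $\tilde f$ having connected fibers, and $g:\tilde C\to C$ finite. Because $f'$ is a submersion, $g$ is \'etale, and $\tilde C$ is connected because $Y'$ is. The composite $Y_t\to\tilde C\to C$ is surjective on $\pi_1$, which forces $\pi_1(\tilde C)\to\pi_1(C)$ to be surjective. A connected covering with surjective $\pi_1$-image has degree one, so $\deg g=1$.

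The only step that needs care is this topological one: applying Lemma~\ref{ce335:lem:prime-pi1} to both factors (each is genuinely ramified and of prime degree), and checking that the fiber of $p'$ sits inside $Y'$ compatibly with $f'$. Everything else follows formally from properness and smoothness.
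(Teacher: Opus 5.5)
Your proposal is correct and follows essentially the same route as the paper. You get connectedness of $Y'$ from the connected-fiber submersion $p'$ over the connected base $T'$, and surjectivity of $f'_*$ on $\pi_1$ from the curve fiber $Y_t$ by applying Lemma~\ref{ce335:lem:prime-pi1} to the two ramified degree-three factors $b_t$ and $a_\beta$. The paper finishes with the Stein factorization argument you offer as your alternative, observing that the normal curve $\widehat C$ is smooth so that $h$ is \'etale. Your primary ending via the homotopy exact sequence of the Ehresmann fibration is an equally valid way to conclude.
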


\begin{proof}
Let
\(
p':Y'=Y\times_T T'\to T',
\) \(
\pi:Y'\to Y
\)
be the natural projections. The morphism $p'$ is the base
change of $p$, so it is a proper holomorphic submersion
with connected curve fibers. Since $T'$ is connected,
$Y'$ is connected as well. 

The projection $\pi$ is finite \'etale, being the base
change of $\tau$. Thus
\(
f'=f\circ\pi:Y'\to C
\)
is again a proper holomorphic submersion.

We first show that $f'$ induces a surjection on
topological fundamental groups. Choose $t'\in T'$,
and set
\(
t=\tau(t'),\) \(
\beta=q(\eta(t)),\) \(
F=(p')^{-1}(t').
\)
Under the natural identification $F\simeq Y_t$, the
restriction of $f'$ to $F$ is the composite
\(
F\simeq Y_t
\xrightarrow{ b_t}
S_\beta
\xrightarrow{a_\beta }
C.
\)
Both $b_t$ and $a_\beta$ are ramified finite morphisms
of degree three. Applying
Lemma~\ref{ce335:lem:prime-pi1} to each map shows that
both induce surjections on fundamental groups.
Consequently,
\(
(f'|_F)_*:\pi_1(F)\to\pi_1(C)
\)
is surjective. If $j:F\hookrightarrow Y'$ denotes
the inclusion, then
\(
(f'|_F)_*=f'_*\circ j_*.
\)
It follows that
\(
f'_*:\pi_1(Y')\to\pi_1(C)
\)
is surjective. 

Now consider the Stein factorization
\[
Y'\xrightarrow{\ g\ }\widehat C
\xrightarrow{\ h\ }C,
\qquad f'=h\circ g.
\]
Here $g$ is surjective with connected fibers, $h$ is
finite, and $\widehat C$ is a connected normal
projective curve, hence smooth; see
\cite[Theorem~37.53.4]{Stacks}.

We claim that $h$ is unramified. If $h$ were ramified
at some $\widehat c\in\widehat C$, then
$dh_{\widehat c}=0$. For any
$y\in g^{-1}(\widehat c)$, the chain rule would give
\(
df'_y=dh_{\widehat c}\circ dg_y=0,
\)
contrary to the fact that $f'$ is a submersion.
Thus $h$ is a connected finite \'etale cover of $C$.

Since $f'_*=h_*\circ g_*$ is surjective, $h_*$ is
surjective as well. For a connected finite covering,
the degree equals the index of the image of its
fundamental group. Hence
\[
\deg h
=
\bigl[\pi_1(C):h_*\pi_1(\widehat C)\bigr]
=1.
\]
Therefore $h$ is an isomorphism. Since $g$ has
connected fibers, so does $f'$.

Taking $T'=T$ and $\tau=\operatorname{id}_T$ proves
that $f:Y\to C$ has connected fibers.
\end{proof}

\subsection{An explicit \'etale extraction of a fifth root}
\label{ce335:sec:root}

The divisor $D$ need not initially have a fifth root in $\operatorname{Pic}(T)$.
We next arrange such a root by a specified finite \'etale base change.
The construction concerns actual line bundles, not merely numerical
classes.

\begin{lemma}\label{ce335:lem:root-on-product}
Let $H_1,H_2$ be smooth connected projective curves of positive genus,
and let $L$ be a line bundle on $H_1\times H_2$. For $j=1,2$, let
\(
\lambda_j:H'_j\to H_j
\)
be the connected characteristic \'etale cover corresponding to
\(
\ker\bigl(\pi_1(H_j)\to H_1(H_j,\mathbb{Z}/5)\bigr).
\)
Put $\lambda=\lambda_1\times\lambda_2$. Then there is a line bundle
$M$ on $H'_1\times H'_2$ with
\(
M^{\otimes5}\simeq\lambda^*L.
\)
\end{lemma}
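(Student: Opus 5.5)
We will use the structure of $\operatorname{Pic}(H_1\times H_2)$. Write $J_j=\operatorname{Pic}^0(H_j)$. The standard description is
\[
\operatorname{Pic}(H_1\times H_2)\simeq \operatorname{pr}_1^*\operatorname{Pic}(H_1)\oplus\operatorname{pr}_2^*\operatorname{Pic}(H_2)\oplus \operatorname{Hom}(J_1,J_2).
\]
Equivalently, every line bundle $L$ can be written as
\[
L\simeq \operatorname{pr}_1^*L_1\otimes\operatorname{pr}_2^*L_2\otimes P,
\]
where $P$ is a correspondence bundle. Here $P$ is trivial on $\{x_0\}\times H_2$ and on $H_1\times\{y_0\}$, and it is determined by a homomorphism $\phi:J_1\to J_2$. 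Pullback by $\lambda=\lambda_1\times\lambda_2$ respects this decomposition, so it suffices to extract fifth roots of each piece after pullback.

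\emph{Step 1: the factors $\operatorname{pr}_j^*L_j$.} Write $L_j=\mathcal O(d_j\,x_j)\otimes N_j$ with $N_j\in J_j$. The cover $\lambda_j$ has degree $5^{2g_j}$, where $g_j=g(H_j)$, so $\deg\lambda_j^*L_j$ is divisible by $5$. Hence $\lambda_j^*L_j\otimes\mathcal O(-\tfrac{\deg}{5}\cdot 5y)$ lies in $\operatorname{Pic}^0(H'_j)$, for any point $y\in H'_j$. Since $\operatorname{Pic}^0(H'_j)$ is a divisible group, $\lambda_j^*L_j$ has a fifth root. No covering is actually needed at this step beyond the divisibility of the degree.

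\emph{Step 2: the correspondence part.} We have $\lambda^*P\simeq P_{\phi'}$ with $\phi'=\lambda_2^*\circ\phi\circ(\lambda_1)_*$, a homomorphism $J(H'_1)\to J(H'_2)$. The bundle $P_{\psi}$ depends additively on $\psi$, so the task is to show $\phi'\in 5\operatorname{Hom}$. The key fact is the following. On $H_1(-,\mathbb Z)$, the map $(\lambda_1)_*:H_1(H'_1,\mathbb Z)\to H_1(H_1,\mathbb Z)$ has image contained in $5H_1(H_1,\mathbb Z)$, because $\lambda_1$ is the cover attached to the kernel of $\pi_1\to H_1(H_1,\mathbb Z/5)$. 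Holomorphic homomorphisms between Jacobians are determined by their action on $H_1(-,\mathbb Z)$, and $(\lambda_1)_*:J(H'_1)\to J_1$ (the norm map) induces exactly this map on lattices. Therefore $(\lambda_1)_*=5\psi_1$ for some homomorphism $\psi_1:J(H'_1)\to J_1$, because a lattice map landing in $5\Lambda$ divided by $5$ is still $\mathbb C$-linear and integral. Consequently $\phi'=5\,(\lambda_2^*\circ\phi\circ\psi_1)$, and $P_{\phi'/5}$ is the required root.

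\emph{Alternative via cohomology.} Instead of Step 2 as written, one can argue through Néron--Severi. It suffices that $c_1(\lambda^*L)\in 5\,\mathrm{NS}(H'_1\times H'_2)$, since $\operatorname{Pic}^0$ is divisible. By Künneth, the $H^1\otimes H^1$ component of $c_1(L)$ pulls back through $\lambda_1^*\otimes\lambda_2^*$. The pullback $\lambda_j^*:H^1(H_j,\mathbb Z)\to H^1(H'_j,\mathbb Z)$ is dual to $(\lambda_j)_*$ on $H_1$, so it lands in $5H^1$. Hence the mixed component becomes divisible by $25$. The $H^2\otimes H^0$ and $H^0\otimes H^2$ components are divisible by the degrees $5^{2g_j}$. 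The resulting class, divided by $5$, is integral and of type $(1,1)$, so by Lefschetz $(1,1)$ it is $c_1$ of some line bundle $M_0$. Then $\lambda^*L\otimes M_0^{-5}\in\operatorname{Pic}^0$ has a fifth root, which we absorb into $M_0$.

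We will write the proof this cohomological way. The main point requiring care is the claim that $\lambda_j^*H^1(H_j,\mathbb Z)\subset 5H^1(H'_j,\mathbb Z)$, equivalently that $(\lambda_j)_*H_1(H'_j)\subset 5H_1(H_j)$. This holds because $H_1(H'_j)$ is the abelianization of $\ker(\pi_1(H_j)\to H_1(H_j,\mathbb Z/5))$, and the image of this kernel in $H_1(H_j,\mathbb Z)$ is $5H_1(H_j,\mathbb Z)$. A further point to check is that $H^2(H'_1\times H'_2,\mathbb Z)$ is torsion-free. This holds by Künneth for curves, so division by $5$ is well defined.
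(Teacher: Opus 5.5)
Your cohomological route is correct and is essentially the paper's proof: Künneth for $H^2(H_1\times H_2)$, the $H^1$ factors killed by the characteristic cover, the $H^2$ factors killed by $5\mid\deg\lambda_j=5^{2g_j}$, and then the Lefschetz $(1,1)$ theorem together with divisibility of $\operatorname{Pic}^0$. The only difference is that you work with integral coefficients, using torsion-freeness and the dual inclusion $(\lambda_j)_*H_1(H'_j,\mathbb Z)\subset 5H_1(H_j,\mathbb Z)$, whereas the paper shows $\lambda^*$ is zero on $H^2(H_1\times H_2,\mathbb F_5)$ and then uses the Bockstein sequence for $0\to\mathbb Z\to\mathbb Z\to\mathbb F_5\to0$ to obtain $c_1(\lambda^*L)\in 5H^2(H'_1\times H'_2,\mathbb Z)$.
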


\begin{proof}
Write
\(
V=H_1\times H_2,\) \(
V'=H'_1\times H'_2,\) \(
\mathbb F_5=\mathbb Z/5\mathbb Z.
\)
We first show that
\[
\lambda^*:H^2(V,\mathbb F_5)
\longrightarrow H^2(V',\mathbb F_5)
\]
is the zero map.

For each $j=1,2$, recall the identification
\(
H^1(H_j,\mathbb F_5)
\simeq
\operatorname{Hom}\bigl(\pi_1(H_j),\mathbb F_5\bigr).
\)
Under this identification, pullback by $\lambda_j$
is the restriction of homomorphisms to the subgroup
\(
K_j=(\lambda_j)_*\pi_1(H'_j)
=
\ker\bigl(\pi_1(H_j)\to
H_1(H_j,\mathbb F_5)\bigr).
\)
Every homomorphism from $\pi_1(H_j)$ to $\mathbb F_5$
factors through $H_1(H_j,\mathbb F_5)$, and therefore
vanishes on $K_j$. Hence
\(
\lambda_j^*:H^1(H_j,\mathbb F_5)
\to H^1(H'_j,\mathbb F_5)
\)
is zero.

The same conclusion holds in degree two. Indeed,
if $g_j=g(H_j)\geq1$, then
\(
\deg\lambda_j=5^{2g_j}.
\)
For a finite covering of compact oriented surfaces,
pullback on top-degree cohomology sends the orientation
generator to the degree times the orientation generator.
Since $5$ divides $\deg\lambda_j$, we obtain
\(
\lambda_j^*:H^2(H_j,\mathbb F_5)
\to H^2(H'_j,\mathbb F_5)
,\,\lambda^*_j=0,
\)
in the sense that this pullback map is zero.

Now the K\"unneth formula gives
\[
\begin{aligned}
H^2(V,\mathbb F_5)\simeq{}&
H^2(H_1,\mathbb F_5)\otimes_{\mathbb F_5}
H^0(H_2,\mathbb F_5)\\
&\oplus
H^1(H_1,\mathbb F_5)\otimes_{\mathbb F_5}
H^1(H_2,\mathbb F_5)\\
&\oplus
H^0(H_1,\mathbb F_5)\otimes_{\mathbb F_5}
H^2(H_2,\mathbb F_5).
\end{aligned}
\]
Under this decomposition, $\lambda^*$ acts on each
summand as the tensor product of the corresponding
pullback maps. Each summand contains a factor in
degree one or two, on which the relevant pullback
map is zero. Thus $\lambda^*$ is zero on
$H^2(V,\mathbb F_5)$.

We next apply this vanishing to the first Chern class
of $L$. Let $\operatorname{red}_5$ denote reduction
of integral cohomology classes modulo $5$.
Naturality of the first Chern class gives
\(
\operatorname{red}_5\bigl(c_1(\lambda^*L)\bigr)
=
\lambda^*\bigl(\operatorname{red}_5(c_1(L))\bigr)
=0.
\)
The coefficient sequence
\(
0\to\mathbb Z
\xrightarrow{\times5}\mathbb Z
\to\mathbb F_5
\to 0
\)
induces the exact sequence
\[
H^2(V',\mathbb Z)
\xrightarrow{\ \times5\ }
H^2(V',\mathbb Z)
\xrightarrow{\ \operatorname{red}_5\ }
H^2(V',\mathbb F_5).
\]
Exactness therefore implies that
\[
c_1(\lambda^*L)=5\alpha
\qquad\text{for some }\alpha\in H^2(V',\mathbb Z).
\]

Since $\lambda^*L$ is holomorphic, its first Chern
class has type $(1,1)$. The image of $\alpha$ in
$H^2(V',\mathbb C)$ is one fifth of this class, so
it also has type $(1,1)$. The integral $(1,1)$
theorem gives a holomorphic line bundle $M_0$ on
$V'$ such that
\(
c_1(M_0)=\alpha;
\)
see \cite[Chapter~VI, \S9.1]{Dem12}.

To obtain the required fifth root, set
\(
P=\lambda^*L\otimes(M_0^\vee)^{\otimes5}.
\)
Then
\[
c_1(P)=c_1(\lambda^*L)-5c_1(M_0)=0.
\]
Thus $P$ belongs to $\operatorname{Pic}^0(V')$,
the complex torus of holomorphic line bundles with
zero integral first Chern class; see again
\cite[Chapter~VI, \S9.1]{Dem12}.
Multiplication by five on a complex torus is
surjective: a point represented by a vector $v$
has a preimage represented by $v/5$.
Consequently, there exists
$Q\in\operatorname{Pic}^0(V')$ such that
\(
Q^{\otimes5}\simeq P.
\)
Taking $M=M_0\otimes Q$, we conclude that
\(
M^{\otimes5}
\simeq M_0^{\otimes5}\otimes P
\simeq\lambda^*L,
\)
as required.
\end{proof}

Apply Lemma \ref{ce335:lem:root-on-product} to $H_1=C$, $H_2=B_0$, and
$L=\mathcal{O}_{C\times B_0}(\Gamma)$. Write the resulting product cover as
\(
\lambda:C'\times B'_0\to C\times B_0,
\) \(M^{\otimes5}\simeq\lambda^*\mathcal{O}(\Gamma).
\)
Set
\(
m=A\circ\eta:T\to C\times B_0.
\)
By \eqref{ce335:eq:triple-divisor},
\begin{equation}\label{ce335:eq:pullback-3D}
m^*\mathcal{O}(\Gamma)\simeq\mathcal{O}_T(3D).
\end{equation}
Choose any nonempty connected component $T_1$ of
\(
T\times_{C\times B_0}(C'\times B'_0).
\)
Let
\(
\tau:T_1\to T,
\) \( m_1:T_1\to C'\times B'_0
\)
be the induced maps, and put $D_1=\tau^{-1}(D)$.
The map $\tau$ is finite \'etale and surjective. The divisor $D_1$
is a nonempty disjoint union of smooth curves, all of negative
self-intersection.

For $N=m_1^*M$, \eqref{ce335:eq:pullback-3D} gives
\(
N^{\otimes5}\simeq\mathcal{O}_{T_1}(3D_1).
\)
Define
$
\mathscr{A}=N^{\otimes2}\otimes\mathcal{O}_{T_1}(-D_1).
$
The identity $2\cdot3-5=1$ gives an isomorphism of line bundles
\begin{equation}\label{ce335:eq:fifth-root}
\mathscr{A}^{\otimes5}\simeq\mathcal{O}_{T_1}(D_1).
\end{equation}

At the same time, make the corresponding \'etale change on the
threefold:
\begin{equation}\label{ce335:eq:Y1}
\begin{gathered}
Y_1=Y\times_TT_1,\,\, p_1:Y_1\to T_1,\,\,
f_1=f\circ\operatorname{pr}_Y,\,\,W_1=p_1^{-1}(D_1).
\end{gathered}
\end{equation}
The threefold $Y_1$ is smooth, connected, and projective. It inherits
negative holomorphic bisectional curvature from $Y$, since
$Y_1\to Y$ is \'etale. By Lemmas~\ref{ce335:lem:transversality} and~\ref{ce335:lem:connected-fibers},
$f_1$ is smooth with connected fibers, $f_1|_{W_1}$ is smooth, and
\begin{equation}\label{ce335:eq:finite-nine-after-basechange}
(p_1,f_1):Y_1\longrightarrow T_1\times C
\quad\text{is finite of degree nine.}
\end{equation}
% Required packages:
% \usepackage{amsmath,amssymb,mathrsfs}
% \usepackage{tikz-cd}

The maps and distinguished divisors in the construction
fit into the following commutative diagram. A pair $(Z,E)$
denotes a variety $Z$ together with a divisor $E\subset Z$;
all arrows denote morphisms of the underlying varieties.
\begin{equation}
\begin{tikzcd}[column sep=3.2em,row sep=3em]
& C &&
\\
T_1\times C
  \arrow[ur,"\operatorname{pr}_C"]
  \arrow[dr,"\operatorname{pr}_{T_1}"']
& (Y_1,W_1)
  \arrow[l,"{(p_1,f_1)}"']
  \arrow[u,"f_1"']
  \arrow[r,"\operatorname{pr}_Y"]
  \arrow[d,"p_1"']
& (Y,W)
  \arrow[ul,"f"']
  \arrow[d,"p"]
&
\\
& (T_1,D_1)
  \arrow[r,"\tau"]
  \arrow[d,"m_1"']
& (T,D)
  \arrow[r,"\eta"]
  \arrow[d,"m"']
& (S,R)
  \arrow[dl,"A"]
\\
& C'\times B'_0
  \arrow[r,"\lambda"']
& (C\times B_0,\Gamma)
&
\end{tikzcd}
\end{equation}

\subsection{The final cover and verification of the theorem}
\label{ce335:sec:final}

\subsubsection*{The cyclic cover and the smooth outer fibration}

Use \eqref{ce335:eq:fifth-root} and the canonical section defining $D_1$ to
form the cyclic cover
\begin{equation}\label{ce335:eq:cyclic-cover}
\sigma:\widehat T
=\operatorname{Spec}_{T_1}\left(\bigoplus_{j=0}^4\mathscr{A}^{-j}\right)
\longrightarrow T_1.
\end{equation}
The multiplication is determined by the section of $\mathscr{A}^{\otimes5}$
corresponding to $D_1$. This is the standard cyclic-cover algebra;
see \cite{Par91}. Locally its equation is $v^5=s_{D_1}$.
It is finite flat of degree five. Along $D_1$, choose a local defining
coordinate $x$; the equation is $v^5=x$, so $\widehat T$ is smooth.
Away from $D_1$ the cover is \'etale.

Since the defining section has nonempty reduced zero
divisor $D_1$, the cyclic cover $\widehat T$ is
irreducible, and hence connected; see
\cite[Remark~3.14(a) and Lemma~3.15(a)]
{EsnaultViehweg1992}.

Define the fiber product
\begin{equation}\label{ce335:eq:final-X}
X:=Y_1\times_{T_1}\widehat T,
\end{equation}
and let
\(
h:X\to Y_1,
\) \(
p':X\to\widehat T
\)
be the natural projections. Explicitly, a point of $X$ is a pair
$(y,\widehat t)$ satisfying $p_1(y)=\sigma(\widehat t)$, and
\(
h(y,\widehat t)=y
\) and \(
p'(y,\widehat t)=\widehat t.
\)
Consequently, these maps satisfy
\(
p_1\circ h=\sigma\circ p'.
\)
Set
\[
\pi=f_1\circ h:X\longrightarrow C.
\]
The map $p'$ is smooth with connected curve fibers of genus fourteen,
being a base change of $p_1$. Thus $X$ is smooth and connected. It is
projective, and $h$ is finite flat of degree five, branched exactly
along $W_1$.

We check that $\pi$ is smooth even at the ramification locus of $h$.
Since $f_1$ and $f_1|_{W_1}$ are submersions, there are local
coordinates $(x,y,t)$ on $Y_1$ near $W_1$ with
\[
f_1(x,y,t)=t,\qquad W_1=(x=0).
\]
The cover has coordinates $(v,y,t)$ with $x=v^5$, so $\pi(v,y,t)=t$.
Away from the ramification divisor, $h$ is \'etale and smoothness is
immediate. Hence $\pi$ is a smooth surjective projective morphism.

\subsubsection*{Connectedness and curvature of every surface fiber}

Fix $c\in C$ and set
\(
F_c=f_1^{-1}(c)\) and \(E_c=F_c\cap W_1.\)
The surface $F_c$ is smooth and connected. As a submanifold of $Y_1$,
it inherits a K\"ahler metric with strictly negative holomorphic
bisectional curvature.

Note that $E_c$ is the fiber over $c$ of the smooth map
$f_1|_{W_1}:W_1\to C$, so it is a smooth curve, possibly
with several disjoint connected components.
To see explicitly why the intersection is reduced, choose a local
coordinate on $C$ that vanishes at $c$. Since $f_1|_{W_1}$ is smooth,
near every point of $E_c$ we can choose holomorphic coordinates
$(x,y,t)$ on $Y_1$ such that
\(
f_1(x,y,t)=t\) and \( W_1=\{x=0\}.
\)
In these coordinates,
\(
F_c=\{t=0\}\) and \( E_c=\{x=t=0\}.
\)
Hence $(x,y)$ are local coordinates on $F_c$, and $E_c$ is defined
there by the single equation $x=0$. In particular, every component
occurs with multiplicity one. This description also applies at
points where the ramification points of the two covers coincide.

Moreover, \eqref{ce335:eq:finite-nine-after-basechange} implies that
$p_1|_{F_c}:F_c\to T_1$ is finite and surjective. Since $D_1$ is
nonempty, its inverse image
\(
E_c=(p_1|_{F_c})^{-1}(D_1)
\)
is nonempty as well. Finally, $W_1$ is the pullback of $D_1$ under
$p_1$, so restricting its defining equation to $F_c$ gives the
pullback divisor $(p_1|_{F_c})^*D_1$. The local calculation above
shows that this divisor is reduced. Therefore, as Cartier
divisors on $F_c$,
\begin{equation}\label{ce335:eq:Ec-pullback}
E_c=(p_1|_{F_c})^*D_1.
\end{equation}

Let $E$ be a connected component of $E_c$, mapping onto a component
$D_{1,i}$ of $D_1$. The map $E\to D_{1,i}$ is finite and surjective.
Because all components of both divisors are disjoint and
\eqref{ce335:eq:Ec-pullback} is reduced, restriction of their divisor line
bundles gives
\begin{equation}\label{ce335:eq:Ec-normal}
N_{E/F_c}\simeq(p_1|_E)^*N_{D_{1,i}/T_1}.
\end{equation}
Therefore
\begin{equation}\label{ce335:eq:Ec-degree}
E^2=\deg(E\to D_{1,i})\,D_{1,i}^2<0.
\end{equation}

The restriction
\[
h_c:X_c\longrightarrow F_c
\]
is a cyclic cover of degree five, totally ramified along $E_c$ and
unramified elsewhere. Its connectedness follows again from \cite[Remark~3.14(a) and Lemma~3.15(a)]
{EsnaultViehweg1992}. Smoothness has already been proved
by the submersion property of $\pi$.

Now Proposition~\ref{ce335:prop:cyclic-metric} applies to $h_c$, by
\eqref{ce335:eq:Ec-degree}. Consequently, every $X_c$ admits a K\"ahler
metric with strictly negative holomorphic bisectional curvature.
 The curve $C$ has genus two and admits
a hyperbolic metric, which supplies the required curvature on the
base.

\subsubsection*{A relative K\"ahler form with negatively curved fiber metrics}

Fix a K\"ahler form $\omega_{Y_1}$ on $Y_1$ with strictly negative
holomorphic bisectional curvature, and set
\(
\eta=h^*\omega_{Y_1}.
\)
The form $\eta$ is smooth, real, and $d$-closed on $X$. For every
$c\in C$, the metric on $X_c$ supplied by
Proposition~\ref{ce335:prop:cyclic-metric} is constructed from
$h_c^*(\omega_{Y_1}|_{F_c})=\eta|_{X_c}$. More precisely,
\eqref{ce335:eq:global-metric} expresses it as
\[
\eta|_{X_c}
 +\sqrt{-1}\partial_c\bar\partial_c\varphi_c,
\qquad
\varphi_c=\varepsilon_c\chi_c(\rho_c+K_c\rho_c^2)
 \in C^\infty(X_c,\mathbb R).
\]
The cutoff makes this a globally defined smooth function on $X_c$.
Thus all the fiber metrics arise from the restriction of the same
global closed form by adding fiber potentials. No smooth dependence
of the auxiliary choices $\varepsilon_c,\chi_c,\rho_c,K_c$ on $c$ is
needed.

Proposition~\ref{ce335:prop:relative-negative-form} now gives a real
smooth function $\Phi$ on $X$ such that
\begin{equation}\label{ce335:eq:final-relative-form}
\omega=h^*\omega_{Y_1}
       +\sqrt{-1}\partial\bar\partial\Phi
\end{equation}
is a relative K\"ahler form for $\pi:X\to C$, and
\begin{equation}\label{ce335:eq:final-fiber-negativity}
\operatorname{HBC}(\omega|_{X_c})<0
\qquad\text{for every }c\in C.
\end{equation}
In particular, $[\omega]=h^*[\omega_{Y_1}]$. If $\omega_C$ is a
hyperbolic K\"ahler form on $C$, then
\(
\omega_k=\omega+k\pi^*\omega_C
\)
is a K\"ahler form on $X$ for all sufficiently large $k$, and
$\omega_k|_{X_c}=\omega|_{X_c}$ for every $c$. Hence the relative form
in the statement of Theorem~\ref{ce335:thm:main} may also be chosen
positive definite on the total space.

There is a further consequence of this construction. The fiber
metrics in \eqref{ce335:eq:final-fiber-negativity} have strictly
negative holomorphic sectional curvature, and the same is true of
$\omega_C$. By \cite[Theorem~1.6]{Wan26}, $\omega_k$ has strictly
negative holomorphic sectional curvature for all sufficiently large
$k$. The obstruction below will show that $X$ admits no K\"ahler
metric with strictly negative holomorphic bisectional curvature.

\subsubsection*{Kodaira--Spencer injectivity at every parameter}

Since $Y_1$ has strictly negative holomorphic bisectional curvature,
Lemma~\ref{ce335:lem:negative-vanishing} gives
\(
H^0(F_c,T_{Y_1}|_{F_c})=0
\)
and so
$\rho_{f_1,c}$ is injective.
Apply Lemma~\ref{ce335:lem:trace-KS} to $h:X\to Y_1$ over $C$, one concludes that $\rho_{\pi,c}$ is injective for every $c\in C$.

\subsubsection*{The tangent-bundle obstruction on the threefold}

Recall that
\(
X=Y_1\times_{T_1}\widehat T,
\)
with projections $h:X\to Y_1$ and $p':X\to\widehat T$.
Choose a point $\widehat t$ on the ramification curve of
$\sigma:\widehat T\to T_1$. Since $\sigma$ is ramified at
$\widehat t$, we can choose a nonzero tangent vector
\(
\xi\in T_{\widehat t}\widehat T,
\) \(d\sigma_{\widehat t}(\xi)=0.
\)
Let
\(
G=(p')^{-1}(\widehat t).
\)
The projection $h$ identifies $G$ with the fiber
$p_1^{-1}(\sigma(\widehat t))$. In particular, $G$ is a
smooth connected compact curve of genus fourteen.

We now use the fixed vector $\xi$ to construct a nowhere-vanishing
holomorphic section of $T_X|_G$. A point of $G$ has the form
$(y,\widehat t)$, where $p_1(y)=\sigma(\widehat t)$. The
fiber-product description of $X$ gives
\begin{equation}\label{ce335:eq:tangent-fiberproduct}
T_{(y,\widehat t)}X
=
\bigl\{(v,w)\in T_yY_1\oplus T_{\widehat t}\widehat T:
dp_1(v)=d\sigma_{\widehat t}(w)\bigr\}.
\end{equation}
Because $d\sigma_{\widehat t}(\xi)=0$, the pair $(0,\xi)$
satisfies this condition at every point of $G$. Thus
\(
s(y,\widehat t)=(0,\xi)
\)
defines a holomorphic section of $T_X|_G$.
Moreover,
\(
dp'\bigl(s(y,\widehat t)\bigr)=\xi\ne0,
\)
so $s$ is nowhere zero. Its span therefore defines a trivial
holomorphic line subbundle of $T_X|_G$, with inclusion
\begin{equation}\label{ce335:eq:trivial-subbundle}
\mathcal O_G\hookrightarrow T_X|_G,
\quad 1\longmapsto s.
\end{equation}

Finally, $h$ is the projection onto the first factor, so
$dh(s)=0$. Since $\pi=f_1\circ h$, we also have
\(
d\pi(s)=df_1\bigl(dh(s)\bigr)=0.
\)
Hence $s$ takes values in the relative tangent bundle
$T_{X/C}=\ker(d\pi)$, and the inclusion above factors as
\[
\mathcal O_G\hookrightarrow T_{X/C}|_G
\hookrightarrow T_X|_G.
\]

By Lemma~\ref{ce335:lem:tangent-obstruction}, \eqref{ce335:eq:trivial-subbundle}
excludes every K\"ahler metric of strictly negative holomorphic
bisectional curvature on $X$. Dualizing it gives a quotient
\(
\Omega_X^1|_G\twoheadrightarrow\mathcal{O}_G,
\)
which also proves that $\Omega_X^1$ is not ample.

Finally, the restriction of $h$ identifies $G$ with the curve fiber
$(Y_1)_{\sigma(\widehat t)}$. The restriction of $f_1$ to that curve
is the composite of the two degree-three maps used in the tower.
Hence
\(
\deg(\pi|_G)=3\cdot3=9.
\)
In particular, $G$ is not contained in an outer fiber $X_c$.
The tangent-bundle obstruction therefore does not contradict the
negative curvature established on every $X_c$. This completes the proof of Theorem~\ref{ce335:thm:main}.

\subsection{Remarks on the construction}
\label{ce335:sec:remarks}

The notation $(3,3,5)$ records the degrees of the two covers of curves
and the final cyclic cover.  All these additional covers
are finite, and every space in the construction remains projective.
The bottom curve $C$ itself is never replaced: it is the fixed genus-two
curve \eqref{ce335:eq:base-curve}.

The relative form constructed in
\eqref{ce335:eq:final-relative-form} induces a smooth Hermitian
metric on $T_{X/C}$. Its restriction to each
$T_{X/C}|_{X_c}=T_{X_c}$ is Griffiths negative. This assertion
concerns curvature in directions tangent to the fiber. Griffiths
negativity on the whole relative tangent bundle would require
strict negativity in every nonzero direction of $T_X$. The trivial
line subbundle along $G$ excludes such a metric on $T_{X/C}$, so
the example does not satisfy the stronger hypothesis in
\cite[Theorem~1.2]{Wan26}. 

There is also a distinction between the intrinsic curvature of
the induced fiber metric and the curvature of an ambient
K\"ahler metric evaluated on vertical tangent vectors. The Gauss
equation relates them through the second fundamental form. Thus
the global K\"ahler forms $\omega_k$ constructed above can induce
strictly negatively curved metrics on all fibers without having
strictly negative bisectional curvature on $X$.

Likewise, the inner curve family $p':X\to\widehat T$ is not effectively
parametrized at a ramification point of $\sigma$: its
Kodaira--Spencer map annihilates $\ker d\sigma$. The everywhere
injective Kodaira--Spencer map in Theorem~\ref{ce335:thm:main} belongs to the
\emph{different} fibration $\pi:X\to C$, whose fibers are surfaces.

\section{Products and the generalized Weil--Petersson metric}
\label{sec:product-MA}

We now consider relative K\"ahler fibrations whose relative forms
satisfy the homogeneous Monge--Amp\`ere equation. The geometry of
these forms determines a natural metric on the base. The product
formula below identifies a source of equality in its bisectional
curvature estimate, complementing the obstruction on the total
space established in Section~\ref{ce335:sec:counterexample}.

\subsection{Monge--Amp\`ere forms and their Kodaira--Spencer tensors}

\begin{definition}
Let $p:(X,\omega)\to B$ be a relative K\"ahler fibration of
relative dimension $n>0$. The relative form $\omega$ is called
a \emph{Monge--Amp\`ere form} if
\begin{equation}\label{eq:MA-definition}
 \omega^{n+1}=0.
\end{equation}
The pair $p:(X,\omega)\to B$ is then called a
\emph{Monge--Amp\`ere fibration}.
\end{definition}

These conditions imply that $\omega$ is semipositive of constant
complex rank $n$. Indeed, in adapted coordinates its vertical
Hermitian block is positive definite. Eliminating the mixed blocks
by a change of frame leaves this positive block and its horizontal
Schur complement. Equation~\eqref{eq:MA-definition} forces the
Schur complement to vanish. Thus the null space of $\omega$ is a
smooth complex distribution complementary to the vertical tangent
bundle. It is the $\omega$-horizontal distribution. We write
$\omega_t=\omega|_{X_t}$ for the induced fiber metric.

For clarity, we distinguish the classical Kodaira--Spencer class
$\rho_{p,t}(v)$ from its representative determined by $\omega$.
If $v$ is a local holomorphic vector field on $B$, let $V$ be its
unique $(1,0)$ horizontal lift, characterized by
\(
 dp(V)=v\) and \( \omega(V,\overline U)=0\)
 for every vertical $(1,0)$  vector $U$.

The tensor
\begin{equation}\label{eq:omega-KS-definition}
 \kappa_v=(\bar\partial V)|_{X_t}
 \in A^{0,1}(X_t,T^{1,0}X_t)
\end{equation}
is $\bar\partial$-closed and represents $\rho_{p,t}(v)$.
It need not be the harmonic representative. In local base
coordinates $t^j$ and fiber coordinates $z^\alpha$, writing the
vertical block of $\omega$ as $(g_{\alpha\bar\beta})$, the lift is
\[
 V_j=\frac{\partial}{\partial t^j}
 -g_{j\bar\beta}g^{\bar\beta\alpha}
 \frac{\partial}{\partial z^\alpha}.
\]
This also shows directly that the construction depends smoothly on
the parameter and is intrinsic.

Following \cite[Definition~1.12]{WanWang2023}, define
\begin{equation}\label{eq:generalized-WP-definition}
 G_{\mathrm{WP},t}(v,\bar w)=
 \int_{X_t}\langle\kappa_v,\kappa_w\rangle_{\omega_t}
 \frac{\omega_t^n}{n!}.
\end{equation}
Here the pointwise pairing is the Hermitian pairing on
$\operatorname{Hom}(T^{0,1}X_t,T^{1,0}X_t)$ induced by $\omega_t$.
If $\rho_{p,t}$ is injective, then every nonzero $v$ has a nonzero
representative $\kappa_v$, so the integral in
\eqref{eq:generalized-WP-definition} is positive. The metric is
therefore positive definite for every effectively parametrized
family. We use the unnormalized integral; dividing by the fiber
volume would change the constant factors in the product formula.

The relevant curvature theorem takes the following form.

\begin{theorem}\label{thm:WW-curvature}
For an effectively parametrized Monge--Amp\`ere fibration, the metric
$G_{\mathrm{WP}}$ is K\"ahler and satisfies
\begin{equation}\label{eq:WW-curvature-bound}
 R^{\mathrm{WP}}(\xi,\bar\xi,\eta,\bar\eta)
 \leq -\frac{2}{n\Vol(X_t)}
       |G_{\mathrm{WP}}(\xi,\bar\eta)|^2,
 \qquad
 \Vol(X_t)=\int_{X_t}\frac{\omega_t^n}{n!}.
\end{equation}
In particular its holomorphic bisectional curvature is nonpositive,
and
\[
 \HSC(G_{\mathrm{WP}})\leq-\frac{2}{n\Vol(X_t)}<0.
\]
\end{theorem}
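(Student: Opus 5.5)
This theorem is essentially a restatement of \cite[Theorem~2.4 and Corollary~2.6]{WanWang2023} in the present normalization. The plan is therefore to reduce to that curvature computation, making the constants explicit and checking that our unnormalized definition \eqref{eq:generalized-WP-definition} matches theirs.

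First, Kählerness. Write $G_{\mathrm{WP}}$ as a fiber integral. On a Monge--Amp\`ere fibration the horizontal lift $V$ satisfies $\iota_V\omega=0$ modulo vertical terms; more precisely, $\omega(V,\cdot)$ vanishes. This gives the standard identity
\[
 \langle\kappa_v,\kappa_w\rangle_{\omega_t}\frac{\omega_t^n}{n!}
\]
equal, after integration over $X_t$, to a fiber integral of the form $p_*\bigl(\sqrt{-1}\,\partial\bar\partial$-type expression$\bigr)$ built from $\omega$ and the Lie derivatives $L_V\omega$. Concretely, $G_{\mathrm{WP}}$ is $\sqrt{-1}$ times the fiber integral of a closed form obtained from $\omega^{n}$ and a closed auxiliary form, following the Berndtsson/Schumacher scheme. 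Its $d$-closedness follows because fiber integration commutes with $d$ for a proper submersion. Positivity has already been noted after \eqref{eq:generalized-WP-definition}, using effectivity.

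Second, the curvature. For holomorphic coordinates $t^j$ on $B$, I will compute $\partial_k\partial_{\bar\ell}G_{j\bar i}$ by differentiating under the integral with Lie derivatives along $V_k$ and $\overline{V_\ell}$. The Monge--Amp\`ere condition $\omega^{n+1}=0$ is what makes $L_{V_k}\omega_t$ and $[V_k,\overline{V_\ell}]$ vertical objects computable from the $\kappa$'s. In particular, $[V_k,\overline{V_\ell}]$ is vertical, and the geodesic-curvature term $c(\omega)$ vanishes identically. This yields a formula in which
\[
R_{j\bar i k\bar\ell}=-\int_{X_t}\bigl(\text{quadratic terms in }\kappa\bigr)
\]
with two pieces. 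The first is a nonnegative term $\int|\bar\partial^*(\kappa_j\cdot\kappa_k)|^2$-like, or more precisely a term involving the Green operator of the $\bar\partial$-Laplacian on functions applied to $\langle\kappa_j,\kappa_k\rangle$-type pairings. The second is the mixed term. I will follow the computation in \cite{WanWang2023} and quote it rather than redo it.

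Third, the bound \eqref{eq:WW-curvature-bound}. The curvature contains the term $-\int_{X_t}\bigl|\langle\kappa_\xi,\kappa_\eta\rangle_{\omega_t}\bigr|^2\omega_t^n/n!$, up to a factor, together with nonpositive Green-operator terms. Applying Cauchy--Schwarz on $X_t$ gives
\[
\Bigl|\int\langle\kappa_\xi,\kappa_\eta\rangle\Bigr|^2\le\Vol(X_t)\int|\langle\kappa_\xi,\kappa_\eta\rangle|^2 ,
\]
which turns it into $-|G_{\mathrm{WP}}(\xi,\bar\eta)|^2/\Vol(X_t)$. The factor $2/n$ should come from the symmetrization of the trace pairing in fiber dimension $n$. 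I expect this to be the main obstacle: matching constants between the pointwise pairing on $\operatorname{Hom}(T^{0,1},T^{1,0})$ and the Wan--Wang normalization. If the constant does not match directly, I will track it through their Corollary~2.6 with our unnormalized volume form $\omega_t^n/n!$. Setting $\eta=\xi$ with $G_{\mathrm{WP}}(\xi,\bar\xi)=1$ then gives the sectional curvature bound, and dropping the right side of \eqref{eq:WW-curvature-bound} gives nonpositivity.
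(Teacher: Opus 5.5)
Your proposal follows the paper's route: the paper gives no independent computation either, but cites \cite[Theorem~2.4, equation~(2.21), and Corollary~2.6]{WanWang2023}, fixes the convention \eqref{ce335:eq:curvature-convention}, and notes that $\Vol(X_t)$ is locally constant because $\omega$ is closed. One correction to your heuristic for the constant: the factor $1/n$ does not come from a term $-\int_{X_t}|\langle\kappa_\xi,\kappa_\eta\rangle_{\omega_t}|^2\,\omega_t^n/n!$ in the curvature formula. It comes from a term of the form $-2\int_{X_t}\|\overline{\kappa_\xi}\circ\kappa_\eta\|^2\,\omega_t^n/n!$ (the pointwise Siegel-type curvature), together with the pointwise inequality $|\operatorname{tr}M|^2\le n\|M\|^2$ for the endomorphism $M=\overline{\kappa_\xi}\circ\kappa_\eta$ of $T^{0,1}X_t$, whose trace is $\langle\kappa_\eta,\kappa_\xi\rangle_{\omega_t}$; only after this step does your integral Cauchy--Schwarz argument yield \eqref{eq:WW-curvature-bound}.
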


This is \cite[Theorem~2.4, equation~(2.21), and Corollary~2.6]{WanWang2023},
with the convention~\eqref{ce335:eq:curvature-convention}.
The fiber volume is locally constant because $\omega$ is closed and
fiber integration commutes with exterior differentiation. The curvature is strictly negative whenever
$G_{\mathrm{WP}}(\xi,\bar\eta)\ne0$.
For orthogonal tangent vectors, the estimate permits
the curvature to vanish.

\subsection{Products of Monge--Amp\`ere fibrations}
\label{subsec:product-MA}

Let
\(
 p_i:(X_i,\omega_i)\to B_i,
\, i=1,2,
\)
be Monge--Amp\`ere fibrations, and let \(n_i\) be the complex dimension of the
fibers of \(p_i\). Put
\[
 X:=X_1\times X_2,
 \qquad
 B:=B_1\times B_2,
 \qquad
 p:=p_1\times p_2,
\]
and denote the projections from \(X\) by \(q_i:X\to X_i\). Define
\begin{equation}
 \omega:=q_1^*\omega_1+q_2^*\omega_2.
 \label{eq:product-MA-form}
\end{equation}

\begin{proposition}
\label{prop:product-stability-MA}
The map
\(
 p:(X,\omega)\to B
\)
is a Monge--Amp\`ere fibration whose fibers have complex dimension
\(n=n_1+n_2\). If both \(X_1\) and \(X_2\) are compact, then \(X\) is
compact.
\end{proposition}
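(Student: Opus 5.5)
I will verify the defining properties one at a time: that $p$ is a holomorphic fibration, that $\omega$ is a relative K\"ahler form, and that $\omega^{n+1}=0$. Compactness of $X$ is immediate, since a product of two compact connected complex manifolds is compact and connected.

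For the fibration structure, $p=p_1\times p_2$ is a product of proper surjective holomorphic submersions. Hence it is proper, surjective, and a submersion, because $dp=dp_1\oplus dp_2$ is surjective. Its fiber over $b=(b_1,b_2)$ is $X_{1,b_1}\times X_{2,b_2}$, which is connected of complex dimension $n_1+n_2$.

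The form $\omega=q_1^*\omega_1+q_2^*\omega_2$ is smooth, real, $d$-closed and of type $(1,1)$, since pullback preserves each of these properties. On the fiber $X_b$ the restriction is
\[
\omega|_{X_b}=\operatorname{pr}_1^*\omega_{1,b_1}+\operatorname{pr}_2^*\omega_{2,b_2}.
\]
This is the product K\"ahler metric, which is positive definite because $T X_b=T X_{1,b_1}\oplus T X_{2,b_2}$ and each summand metric is positive on its factor.

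The Monge--Amp\`ere equation is the only point needing a computation. Since $q_1^*\omega_1$ and $q_2^*\omega_2$ are even forms, they commute. The binomial theorem therefore gives
\[
\omega^{n+1}=\sum_{k=0}^{n+1}\binom{n+1}{k}\,q_1^*\omega_1^{k}\wedge q_2^*\omega_2^{\,n+1-k}.
\]
In each term, either $k\ge n_1+1$, in which case $\omega_1^k=0$ by the Monge--Amp\`ere condition on $X_1$, or $n+1-k\ge n_2+1$, in which case $\omega_2^{n+1-k}=0$. Indeed, $k\le n_1$ forces $n+1-k\ge n_2+1$. Thus every term vanishes and $\omega^{n+1}=0$.

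I expect no step to be a real obstacle. The only care needed is to use the vanishing $\omega_i^{n_i+1}=0$ on all of $X_i$, not just on the fibers, which is exactly what the hypothesis provides. It is also worth noting, for later use, that the null distribution of $\omega$ is the direct sum of the pulled-back horizontal distributions. This is consistent with constant rank $n$, but it is not needed for the proposition itself.
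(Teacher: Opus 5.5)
Your proposal is correct and follows the paper's proof essentially step by step: $p$ is a product of proper holomorphic submersions, $\omega$ is positive on fibers because the fiber tangent space splits, and $\omega^{n+1}=0$ comes from the binomial expansion with the case split $k\ge n_1+1$ versus $k\le n_1$. Your observation that the two pulled-back forms have even degree and therefore commute supplies the justification for the binomial expansion, which the paper uses without comment.
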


\begin{proof}
The map \(p\) is a proper holomorphic submersion because it is the product
of two proper holomorphic submersions. Its fiber over \(t=(t_1,t_2)\) is
\(
 X_t=X_{1,t_1}\times X_{2,t_2}.
\)
The form \(\omega\) is real, smooth, of type \((1,1)\), and \(d\)-closed. Its
restriction to \(X_t\) is
\(
 \omega_t
 =q_{1,t}^*\omega_{1,t_1}+q_{2,t}^*\omega_{2,t_2},
\)
where \(q_{i,t}:X_t\to X_{i,t_i}\) is the projection. If
\(u=(u_1,u_2)\in T_xX_t\) is nonzero, then
\[
 \omega_t(u,Ju)
 =
 \omega_{1,t_1}(u_1,J_1u_1)
 +
 \omega_{2,t_2}(u_2,J_2u_2)>0.
\]
Thus \(\omega_t\) is a K\"ahler form.

It remains to verify the homogeneous Monge--Amp\`ere equation. Since
\(
 \omega_1^{n_1+1}=0,
\) \(
 \omega_2^{n_2+1}=0,
\)
the binomial expansion gives
\[
 \omega^{n+1}
 =
 \sum_{a=0}^{n+1}
 \binom{n+1}{a}
 q_1^*\omega_1^a\wedge q_2^*\omega_2^{n+1-a}.
\]
For every index \(a\), either \(a\ge n_1+1\), in which case
\(\omega_1^a=0\), or \(a\le n_1\), in which case
\(
 n+1-a=n_1+n_2+1-a\ge n_2+1
\)
and hence \(\omega_2^{n+1-a}=0\). Every term therefore vanishes, so
\(
 \omega^{n+1}=0.
\)
This proves that \(p:(X,\omega)\to B\) is Monge--Amp\`ere. The final
assertion follows from compactness of a finite product of compact spaces.
\end{proof}

We next describe the horizontal lifts and Kodaira--Spencer tensors of the
product. Let \(v_i\) be a vector field on \(B_i\), and let \(V_i\) be its
\(\omega_i\)-horizontal lift to \(X_i\). Since the two summands in
\eqref{eq:product-MA-form} have no mixed terms, the \(\omega\)-horizontal
lift of \((v_1,v_2)\) is
\begin{equation}
 V_{(v_1,v_2)}=(V_1,V_2).
 \label{eq:product-horizontal-lift}
\end{equation}
Consequently, under the natural splitting
\[
 T^{1,0}X_t
 =
 q_{1,t}^*T^{1,0}X_{1,t_1}
 \oplus
 q_{2,t}^*T^{1,0}X_{2,t_2},
\]
the \(\omega\)-Kodaira--Spencer tensor is block diagonal:
\begin{equation}
 \kappa_{(v_1,v_2)}
 =
 \begin{pmatrix}
  q_{1,t}^*\kappa^{(1)}_{v_1} & 0\\
  0 & q_{2,t}^*\kappa^{(2)}_{v_2}
 \end{pmatrix}.
 \label{eq:product-KS-tensor}
\end{equation}
Here \(\kappa^{(i)}_{v_i}\) denotes the
\(\omega_i\)-Kodaira--Spencer tensor of \(p_i\) in the direction \(v_i\).

\begin{proposition}
\label{prop:product-KS-injective}
Assume that the Kodaira--Spencer maps of \(p_1\) and \(p_2\) are injective
at every point. Then the Kodaira--Spencer map of \(p\) is injective at every
point.
\end{proposition}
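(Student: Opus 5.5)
Fix $t=(t_1,t_2)\in B$ and a nonzero $v=(v_1,v_2)\in T_tB=T_{t_1}B_1\oplus T_{t_2}B_2$. The plan is to show $\rho_{p,t}(v)\neq0$ in $H^1(X_t,T_{X_t})$, where $X_t=X_{1,t_1}\times X_{2,t_2}$. Rather than work with harmonic representatives, I would use the block-diagonal representative \eqref{eq:product-KS-tensor} and project it to one factor cohomologically. Under the splitting $T_{X_t}=q_{1,t}^*T_{X_{1,t_1}}\oplus q_{2,t}^*T_{X_{2,t_2}}$, cohomology splits as
\[
H^1(X_t,T_{X_t})=H^1\bigl(X_t,q_{1,t}^*T_{X_{1,t_1}}\bigr)\oplus H^1\bigl(X_t,q_{2,t}^*T_{X_{2,t_2}}\bigr).
\]
By \eqref{eq:product-KS-tensor}, the class $\rho_{p,t}(v)=[\kappa_{(v_1,v_2)}]$ has components $q_{1,t}^*[\kappa^{(1)}_{v_1}]$ and $q_{2,t}^*[\kappa^{(2)}_{v_2}]$. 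Equivalently, one can avoid metrics and note that $X\to B$ is the product family, so its Kodaira--Spencer class is the sum of the pulled-back classes. This is the same naturality argument as in Lemma~\ref{ce335:lem:trace-KS}, applied to the projections $X\to X_i$ over $B\to B_i$.

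Without loss of generality $v_1\neq0$. It then suffices to show that
\[
q_{1,t}^*:H^1(X_{1,t_1},T_{X_{1,t_1}})\to H^1\bigl(X_{1,t_1}\times X_{2,t_2},\,q_{1,t}^*T_{X_{1,t_1}}\bigr)
\]
is injective, since $[\kappa^{(1)}_{v_1}]=\rho_{p_1,t_1}(v_1)\neq0$ by hypothesis. Injectivity follows from the K\"unneth formula for coherent cohomology. Because $X_{2,t_2}$ is compact and connected, we have
\[
H^1\bigl(X_t,q_1^*E\bigr)\cong H^1(X_{1,t_1},E)\otimes H^0(X_{2,t_2},\mathcal O)\,\oplus\,H^0(X_{1,t_1},E)\otimes H^1(X_{2,t_2},\mathcal O),
\]
with $H^0(X_{2,t_2},\mathcal O)=\mathbb C$. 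The map $q_1^*$ is the inclusion of the first summand. A more elementary alternative is to choose any point $x_2\in X_{2,t_2}$ and restrict to the slice $X_{1,t_1}\times\{x_2\}$. Because $q_{1,t}$ composed with the slice inclusion $s$ is the identity, $s^*\circ q_{1,t}^*=\mathrm{id}$ on $H^1(X_{1,t_1},T_{X_{1,t_1}})$, and injectivity is immediate. I would use this slice argument, since it needs neither K\"unneth nor compactness beyond what is given.

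It remains to check that the first component of $\rho_{p,t}(v)$ is detected without interference from the second. The projection onto the first summand $q_{1,t}^*T_{X_{1,t_1}}$ is a holomorphic bundle map, so it induces a map on $H^1$. The image of $\rho_{p,t}(v)$ under this map is $q_{1,t}^*\rho_{p_1,t_1}(v_1)$, which is nonzero by the previous paragraph, so $\rho_{p,t}(v)\neq0$. The main point needing care is the first step: confirming that $\kappa$ of \eqref{eq:omega-KS-definition} actually represents the classical class and that the block form \eqref{eq:product-KS-tensor} holds. Both are stated above, so the obstacle is essentially bookkeeping. For a metric-free argument, I would instead apply the naturality of connecting homomorphisms to the map of tangent sequences induced by $q_1:X\to X_1$ over $B\to B_1$, restricted to $X_t$. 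This gives $H^1(dq_1)(\rho_{p,t}(v))=q_{1,t}^*\rho_{p_1,t_1}(v_1)$ directly, exactly as in the proof of Lemma~\ref{ce335:lem:trace-KS}.
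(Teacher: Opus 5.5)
Your proposal is correct and is essentially the paper's argument. Both use the block-diagonal representative \eqref{eq:product-KS-tensor}, project holomorphically onto the first factor's tangent bundle, and restrict to a slice $X_{1,t_1}\times\{x_2\}$ to recover $\rho_{p_1,t_1}(v_1)$; you simply project first and then use $s^*\circ q_{1,t}^*=\mathrm{id}$ as a left inverse, while the paper restricts first, and your metric-free variant via connecting homomorphisms also works, with the right-hand vertical map now $d\operatorname{pr}_1$ rather than the identity of Lemma~\ref{ce335:lem:trace-KS}.
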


\begin{proof}
At \(t=(t_1,t_2)\), the Kodaira--Spencer class of \((v_1,v_2)\) is
represented by the block-diagonal tensor in
\eqref{eq:product-KS-tensor}. Equivalently, under the natural K\"unneth
inclusions, one has
\[
 \rho_t(v_1,v_2)
 =
 q_{1,t}^*\rho_{1,t_1}(v_1)
 +
 q_{2,t}^*\rho_{2,t_2}(v_2).
\]

Suppose that \(\rho_t(v_1,v_2)=0\). Restrict a representative to a slice
\(
 X_{1,t_1}\times\{x_2\}\subset X_t.
\)
Then project the restricted tangent bundle holomorphically onto
\(T X_{1,t_1}\). The second summand vanishes on the slice, since its form
component comes from the second factor. The first gives
\(\rho_{1,t_1}(v_1)\). Hence
\(
 \rho_{1,t_1}(v_1)=0.
\)
Injectivity of \(\rho_{1,t_1}\) gives \(v_1=0\). Restricting instead to a
slice
\(
 \{x_1\}\times X_{2,t_2}
\)
similarly gives \(v_2=0\). Therefore \(\rho_t\) is injective.
\end{proof}

Let
\[
 \operatorname{Vol}_i
 :=
 \int_{X_{i,t_i}}\frac{\omega_{i,t_i}^{n_i}}{n_i!}.
\]
Because \(\omega_i\) is \(d\)-closed and \(p_i\) is a proper smooth
fibration, \(\operatorname{Vol}_i\) is constant on every connected component
of \(B_i\). The fiberwise volume form of the product satisfies
\begin{equation}
 \frac{\omega_t^{n_1+n_2}}{(n_1+n_2)!}
 =
 q_{1,t}^*\frac{\omega_{1,t_1}^{n_1}}{n_1!}
 \wedge
 q_{2,t}^*\frac{\omega_{2,t_2}^{n_2}}{n_2!}.
 \label{eq:product-volume-form}
\end{equation}
Indeed, in the binomial expansion of \(\omega_t^{n_1+n_2}\), the only
nonzero term is the term of bidegree \((n_1,n_2)\) with respect to the two
factors.

\begin{proposition}
\label{prop:product-WP-splitting}
Assume the Kodaira--Spencer maps of the two factors are injective.
Let \(G_{\mathrm{WP},i}\) be the generalized Weil--Petersson metric of
\((X_i,\omega_i)\to B_i\), and let \(G_{\mathrm{WP}}\) be that of the
product. Then
\begin{equation}
 G_{\mathrm{WP}}
 =
 \operatorname{Vol}_2\,\operatorname{pr}_1^*G_{\mathrm{WP},1}
 +
 \operatorname{Vol}_1\,\operatorname{pr}_2^*G_{\mathrm{WP},2}.
 \label{eq:product-WP-metric}
\end{equation}
In particular, \(G_{\mathrm{WP}}\) is a Riemannian product metric, up to
constant positive rescalings of the two factors.
\end{proposition}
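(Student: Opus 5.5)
The plan is to compute $G_{\mathrm{WP}}$ directly from its definition \eqref{eq:generalized-WP-definition}. We use the block-diagonal form \eqref{eq:product-KS-tensor} of the Kodaira--Spencer tensor and the product volume form \eqref{eq:product-volume-form}. Fix $t=(t_1,t_2)$ and tangent vectors $v=(v_1,v_2)$, $w=(w_1,w_2)$ at $t$; extend them to local holomorphic vector fields of product type, so that \eqref{eq:product-horizontal-lift} applies. By Proposition~\ref{prop:product-KS-injective} the product is effectively parametrized, so $G_{\mathrm{WP}}$ is a positive-definite metric to which Theorem~\ref{thm:WW-curvature} applies. Only the formula itself needs proof here.

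The first step is pointwise. Since $\omega_t=q_{1,t}^*\omega_{1,t_1}+q_{2,t}^*\omega_{2,t_2}$ has no mixed terms, the splitting $T^{1,0}X_t=q_{1,t}^*T^{1,0}X_{1,t_1}\oplus q_{2,t}^*T^{1,0}X_{2,t_2}$ is $\omega_t$-orthogonal. The same holds for the dual splitting of $T^{0,1}X_t$. Hence the induced Hermitian pairing on $\operatorname{Hom}(T^{0,1}X_t,T^{1,0}X_t)$ makes the four blocks mutually orthogonal. Applied to the block-diagonal tensors \eqref{eq:product-KS-tensor}, this gives
\[
\langle\kappa_v,\kappa_w\rangle_{\omega_t}
=q_{1,t}^*\langle\kappa^{(1)}_{v_1},\kappa^{(1)}_{w_1}\rangle_{\omega_{1,t_1}}
+q_{2,t}^*\langle\kappa^{(2)}_{v_2},\kappa^{(2)}_{w_2}\rangle_{\omega_{2,t_2}}.
\]
Here I would check in local product coordinates that the pullback of a $(0,1)$-form from one factor, paired with itself, uses only that factor's metric block.

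The second step integrates this identity against \eqref{eq:product-volume-form} and applies Fubini on $X_t=X_{1,t_1}\times X_{2,t_2}$. The first term is a function pulled back from $X_{1,t_1}$. Its integral therefore factors as
\[
\int_{X_{1,t_1}}\langle\kappa^{(1)}_{v_1},\kappa^{(1)}_{w_1}\rangle\frac{\omega_{1,t_1}^{n_1}}{n_1!}\cdot\int_{X_{2,t_2}}\frac{\omega_{2,t_2}^{n_2}}{n_2!}
=\operatorname{Vol}_2\,G_{\mathrm{WP},1}(v_1,\bar w_1),
\]
and symmetrically for the second term. This yields \eqref{eq:product-WP-metric} at $t$.

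Finally, $\operatorname{Vol}_i$ is locally constant, as noted above. On each connected component of $B_1\times B_2$, the right side of \eqref{eq:product-WP-metric} is therefore a sum of constant positive multiples of pullbacks of the factor metrics. This gives the Riemannian-product assertion. The main point needing care is the well-definedness of the first step, namely that $\kappa_v$ at $t$ depends only on the vector $v$ and not on its extension. This holds because $\kappa_v$ is tensorial in $v$: the horizontal lift is $C^\infty$-linear, and $\bar\partial$ of a lift of a holomorphic field restricted to the fiber only involves $v(t)$. With that in place, everything else is bookkeeping.
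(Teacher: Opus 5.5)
Your proposal is correct and follows essentially the same route as the paper: the pointwise identity from the orthogonal splitting and the block-diagonal tensor \eqref{eq:product-KS-tensor}, then integration against \eqref{eq:product-volume-form} with Fubini. Your extra remarks on the tensoriality of $\kappa_v$ and on the local constancy of $\operatorname{Vol}_i$ are sound refinements that the paper leaves implicit.
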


\begin{proof}
For \(v=(v_1,v_2)\) and \(w=(w_1,w_2)\), the orthogonal splitting of the
vertical tangent bundle and \eqref{eq:product-KS-tensor} give the pointwise
identity
\[
 \langle\kappa_v,\kappa_w\rangle_{\omega_t}
 =
 q_{1,t}^*
 \langle
  \kappa^{(1)}_{v_1},
  \kappa^{(1)}_{w_1}
 \rangle_{\omega_{1,t_1}}
 +
 q_{2,t}^*
 \langle
  \kappa^{(2)}_{v_2},
  \kappa^{(2)}_{w_2}
 \rangle_{\omega_{2,t_2}}.
\]
Integrating this equality against \eqref{eq:product-volume-form} and using
Fubini's theorem yields
\[
 \langle v,w\rangle_{\mathrm{WP}}
 =
 \operatorname{Vol}_2
 \langle v_1,w_1\rangle_{\mathrm{WP},1}
 +
 \operatorname{Vol}_1
 \langle v_2,w_2\rangle_{\mathrm{WP},2},
\]
which is precisely \eqref{eq:product-WP-metric}.
\end{proof}

\begin{corollary}
\label{cor:mixed-bisectional-zero}
Under the hypotheses of Proposition~\ref{prop:product-WP-splitting},
let \(\xi_1\in T_{t_1}B_1\) and
\(\eta_2\in T_{t_2}B_2\) be nonzero, and set
\(
 \xi=(\xi_1,0),
\) \(
 \eta=(0,\eta_2)
 \in T_{(t_1,t_2)}(B_1\times B_2).
\)
Then
\begin{equation}
 R^{\mathrm{WP}}(\xi,\overline\xi,\eta,\overline\eta)=0.
 \label{eq:mixed-bisectional-zero}
\end{equation}
\end{corollary}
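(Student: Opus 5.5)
The plan is to derive the corollary directly from the product formula \eqref{eq:product-WP-metric} of Proposition~\ref{prop:product-WP-splitting}. The curvature tensor of a Riemannian product of Kähler manifolds splits accordingly, and rescaling a factor by a positive constant keeps it Kähler. By Theorem~\ref{thm:WW-curvature}, each $G_{\mathrm{WP},i}$ is Kähler. Hence $G_{\mathrm{WP}}=\operatorname{Vol}_2\operatorname{pr}_1^*G_{\mathrm{WP},1}+\operatorname{Vol}_1\operatorname{pr}_2^*G_{\mathrm{WP},2}$ is a Kähler product metric on $B_1\times B_2$, with $\operatorname{Vol}_1,\operatorname{Vol}_2$ locally constant.

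Concretely, I would choose local holomorphic coordinates $t^j$ on $B_1$ and $s^k$ on $B_2$. In the combined coordinates $(t,s)$ the metric matrix is block diagonal,
\[
g=\begin{pmatrix}\operatorname{Vol}_2\,g^{(1)}(t)&0\\0&\operatorname{Vol}_1\,g^{(2)}(s)\end{pmatrix},
\]
where the first block depends only on $t$ and the second only on $s$. I then evaluate \eqref{ce335:eq:curvature-convention} for the component $R_{i\bar j k\bar\ell}$ with $i,j$ indices from the $t$-block and $k,\ell$ from the $s$-block.

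The first term $-\partial_{s^k}\partial_{\bar s^\ell}g_{i\bar j}$ vanishes, because $g_{i\bar j}=\operatorname{Vol}_2\,g^{(1)}_{i\bar j}(t)$ is independent of $s$. In the second term, $\partial_{s^k}g_{i\bar q}$ also vanishes: it is zero for $q$ in the $t$-block by $s$-independence, and zero for $q$ in the $s$-block because the off-diagonal entries vanish identically. Thus every mixed component $R_{i\bar j k\bar\ell}$ is zero.

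By multilinearity, $R^{\mathrm{WP}}(\xi,\bar\xi,\eta,\bar\eta)=\sum \xi_1^i\bar\xi_1^j\eta_2^k\bar\eta_2^\ell R_{i\bar jk\bar\ell}=0$, which is \eqref{eq:mixed-bisectional-zero}. There is no real obstacle here. The only point needing care is to use the block form in actual holomorphic coordinates, rather than merely at a point, so that the derivatives of the off-diagonal entries vanish too. This holds because the splitting \eqref{eq:product-WP-metric} is an identity of tensors on all of $B$ and the volumes are constant.
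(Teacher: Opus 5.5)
Your proposal is correct and takes essentially the paper's approach: the paper notes that \eqref{eq:product-WP-metric} makes $G_{\mathrm{WP}}$ a product metric, so every mixed curvature component vanishes. Your coordinate computation with \eqref{ce335:eq:curvature-convention} makes this explicit, and you rightly flag that local constancy of $\operatorname{Vol}_1$ and $\operatorname{Vol}_2$ is needed; otherwise the metric would be a warped product and the mixed terms need not vanish.
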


\begin{proof}
Equation \eqref{eq:product-WP-metric} shows directly that
\(G_{\mathrm{WP}}\) is a product metric, so every mixed curvature component
vanishes.
\end{proof}

\section{Compact Monge--Amp\`ere fibrations from Shimura curves}
\label{sec:compact-Shimura}

We now give a compact algebraic realization of the product construction.
The starting point is a universal family of abelian surfaces over a compact
quaternionic Shimura curve. Its polarization defines a relative
K\"ahler form satisfying the Monge--Amp\`ere equation, while the
variation of the period lattice gives an everywhere nonzero
Kodaira--Spencer class. 

\subsection{The quaternionic family}
\label{subsec:compact-Shimura-family}

We construct a family of principally polarized abelian surfaces
over a compact Shimura curve. We first specify the quaternionic
data and then record the analytic description used below.

Let $D$ be an indefinite quaternion division algebra over
$\mathbb Q$, with maximal order $\mathcal O_D$ and discriminant
$d_D>1$. Thus $D$ is four-dimensional over $\mathbb Q$ and admits
a real splitting
\[
\sigma:D\otimes_{\mathbb Q}\mathbb R
\xrightarrow{\sim}M_2(\mathbb R).
\]
Here $\mathcal O_D$ is a subring which is a full integral lattice
in $D$, maximal among such subrings, and $d_D$ is the product of
the finite primes at which $D$ ramifies.
One may take $d_D=6$; see \cite[\S43.2]{Voight2021}.
Write $\beta^\iota$ for the main involution and
\[
\operatorname{trd}(\beta)=\beta+\beta^\iota,
\qquad
q_D(\beta)=\beta\beta^\iota
\]
for the reduced trace and norm. Under $\sigma$, these become the
matrix trace and determinant.

By \cite[43.6.6]{Voight2021}, there exists
$\mu\in\mathcal O_D$ such that
\begin{equation}\label{eq:mu-square}
\mu^2=-d_D,
\qquad
\beta^*=\mu^{-1}\beta^\iota\mu.
\end{equation}
The second formula defines a positive involution on $D$.
We choose the sign of $\mu$ as follows. For
$\tau\in\mathbb H:=\{\tau\in\mathbb C:\operatorname{Im}\tau>0\}$,
put
\begin{equation}\label{eq:QM-fiber-lattice}
v_\tau=\begin{pmatrix}\tau\\1\end{pmatrix},
\qquad
\Lambda_\tau=\sigma(\mathcal O_D)v_\tau,
\qquad
A_\tau=\mathbb C^2/\Lambda_\tau.
\end{equation}
The lattice $\Lambda_\tau$ has rank four. After replacing $\mu$
by $-\mu$ if necessary, the form
\[
E_{\mu,\tau}
\bigl(\sigma(\alpha)v_\tau,\sigma(\beta)v_\tau\bigr)
=
\frac{1}{d_D}\operatorname{trd}(\mu\alpha\beta^\iota),
\qquad \alpha,\beta\in\mathcal O_D,
\]
is an integral Riemann form defining a principal polarization
on every $A_\tau$; see
\cite[Lemmas~43.6.7, 43.6.16, and~43.6.22]{Voight2021}.
In particular, the $A_\tau$ are abelian surfaces.

Fix $N\ge3$ with $\gcd(N,d_D)=1$. We use the moduli problem
of triples $(A\to S,\iota_A,\eta)$ over complex schemes $S$,
where $A\to S$ is an abelian scheme of relative dimension two,
\(
\iota_A:\mathcal O_D\to\operatorname{End}_S(A)
\)
is a unital action satisfying
\[
\det\bigl(
d\iota_A(\beta)|_{\operatorname{Lie}(A/S)}
\bigr)
=q_D(\beta)\cdot1_{\mathcal O_S},
\qquad \beta\in\mathcal O_D,
\]
and
\(
\eta:(\mathcal O_D/N\mathcal O_D)_S
\xrightarrow{\sim}A[N]
\)
is an $\mathcal O_D$-linear isomorphism.
Here $\operatorname{Lie}(A/S)$ is the rank-two tangent bundle
along the identity section, $A[N]=\ker[N]$, and the subscript
$S$ denotes the constant group scheme. Thus $\eta$ labels all
$N$-torsion points compatibly with the quaternionic action.
Isomorphisms of triples preserve both the action and the labeling.

This is the quaternionic PEL moduli problem with full level $N$.
The action and the fixed involution $*$ determine a unique
compatible principal polarization
$\lambda_A:A\xrightarrow{\sim}A^\vee$, characterized by
\(
\iota_A(\beta)^\vee\circ\lambda_A
=
\lambda_A\circ\iota_A(\beta^*).
\)
Here $A^\vee$ is the dual abelian scheme.
The polarization therefore need not be specified separately.
Take the principal congruence level $U=U(N)$, with $N\ge3$.
The quaternionic moduli stack described in
\cite[\S2.1, pp.~5--6]{Yuan2024} is then a scheme.
Let $M_N$ denote its base change to $\mathbb C$.
Since $D$ is a division algebra, $M_N$ is a smooth projective
curve, possibly disconnected; see
\cite[\S2.1--\S2.2, p.~6]{Yuan2024}.
The universal abelian scheme introduced in
\cite[\S1, p.~2]{Yuan2024} gives, after the same base change,
a family
\(
\mathcal U_N\to M_N.
\)
By the moduli interpretation, every family of the specified
triples over a complex scheme $S$ is obtained by pulling back
this family along its classifying morphism $S\to M_N$.

The surfaces in \eqref{eq:QM-fiber-lattice} carry the action
and level structure
\[
\iota_\tau(\beta)[z]=[\sigma(\beta)z],
\qquad
\eta_\tau(\alpha\bmod N\mathcal O_D)
=
\left[\tfrac{\sigma(\alpha)v_\tau}{N}\right].
\]
Let $C$ be the connected component of $M_N$ containing
$(A_{\mathrm i},\iota_{\mathrm i},\eta_{\mathrm i})$, and set
\begin{equation}\label{eq:universal-QM-surface}
\pi:\mathcal A:=\mathcal U_N\times_{M_N}C
\longrightarrow C.
\end{equation}
This is a smooth proper family of principally polarized
abelian surfaces. Specializing the adelic uniformization in
\cite[\S2.1, p.~5]{Yuan2024} to the connected component
chosen above gives
\begin{equation}\label{eq:Shimura-curve-uniformization}
C^{\mathrm{an}}\simeq\Gamma\backslash\mathbb H,
\quad
\Gamma=
\left\{
\gamma\in\mathcal O_D^\times:
q_D(\gamma)=1,\ 
\gamma\equiv1\pmod{N\mathcal O_D}
\right\}.
\end{equation}
Indeed, this component corresponds to the identity finite-adelic
class, whose stabilizer is $D_+^\times\cap U(N)=\Gamma$.
The group $\Gamma$ is torsion-free for $N\ge3$.
To see this, write $\gamma=1+N\alpha$ with
$\alpha\in\mathcal O_D$. The identity $q_D(\gamma)=1$ gives
\[
\operatorname{trd}(\gamma)
=2-N^2q_D(\alpha)\in2+N^2\mathbb Z.
\]
If $\gamma$ has finite order, its trace lies in $[-2,2]$,
so it must equal $2$. A finite-order matrix with determinant
one and trace two is the identity.

Let
\(
\mathcal O_D^1
:=\{\gamma\in\mathcal O_D^\times:q_D(\gamma)=1\}.
\)
The subgroup $\Gamma$ has finite index in $\mathcal O_D^1$,
since it is the kernel of reduction modulo $N\mathcal O_D$.
Because $D$ is a division algebra, the image of
$\mathcal O_D^1$ in $\operatorname{PSL}_2(\mathbb R)$ is
cocompact; see \cite[Main Theorem~38.4.3]{Voight2021}.
The same holds for its finite-index subgroup given by
the image of $\Gamma$. Thus
$C^{\mathrm{an}}\simeq\Gamma\backslash\mathbb H$ is compact.

Moreover, since $\Gamma$ is discrete and torsion-free,
its action on $\mathbb H$ is free. The quotient map
$\mathbb H\to C^{\mathrm{an}}$ is therefore an unramified
covering, and the hyperbolic metric on $\mathbb H$
descends to a smooth metric of Gaussian curvature $-1$
on $C$. 
Consequently, $g(C)\ge2$.

The total space $\mathcal A$ is projective as well. Indeed, if
$\mathcal P$ is the normalized Poincar\'e bundle on
$\mathcal A\times_C\mathcal A^\vee$, then
\(
(\operatorname{id}_{\mathcal A},\lambda_{\mathcal A})^*\mathcal P
\)
is relatively ample by
\cite[Proposition~1.3.2.18]{Lan2008}.
Since $C$ is projective, $\mathcal A$ is a smooth projective
threefold.

Finally, the pullback of \eqref{eq:universal-QM-surface} to
$\mathbb H$ has the explicit description
\begin{equation}\label{eq:analytic-universal-QM-family}
\mathcal A_{\mathbb H}
=
\mathcal O_D\backslash(\mathbb H\times\mathbb C^2),
\qquad
\beta\cdot(\tau,z)
=
\bigl(\tau,z+\sigma(\beta)v_\tau\bigr);
\end{equation}
see \cite[\S4.2]{Yuan2024}.
Its fiber over $\tau$ is precisely $A_\tau$.
These abelian surfaces with quaternionic multiplication are also
called \emph{false elliptic curves}; see \cite[\S1]{Buzzard1997}.

\subsection{The Betti form}
\label{subsec:Betti-MA-form}

We recall the Betti form of a polarized abelian scheme and verify its
Monge--Amp\`ere property. The construction is intrinsic to the polarized
integral local system; see
\cite[Proposition~2.2 and \S2.2]{DimitrovGaoHabegger2021}.

On a sufficiently small simply connected open subset $U\subset C$,
choose a flat symplectic basis of $R_1\pi_*\mathbb Z$. Here $R_1\pi_*\mathbb Z$ denotes the integral homology local
system whose fiber at $t\in C$ is $H_1(A_t,\mathbb Z)$. In normalized
period coordinates the family is
\begin{equation}
 \mathcal A|_U
 =\bigl(U\times\mathbb C^2\bigr)
   /\bigl(\mathbb Z^2+Z(t)\mathbb Z^2\bigr),
 \quad Z:U\longrightarrow\mathfrak H_2,
 \label{eq:local-period-description}
\end{equation}
where $Z$ is holomorphic and symmetric and $Y=\operatorname{Im}Z$ is
positive definite, and $\mathfrak{H}_2:=\{Z \in M_2(\mathbb{C}): Z^{\top}=Z, \operatorname{Im} Z>0\}$. Write
\begin{equation}
 w=x+Z(t)y.
 \label{eq:Betti-coordinates}
\end{equation}
Here $(x,y)$ is a point of $\mathbb R^4/\mathbb Z^4$; the individual
real functions $x_\alpha,y_\alpha$ are defined only after choosing local
lifts. Their differentials, and the form
\begin{equation}
 \omega_{\mathrm B}
 =2\sum_{\alpha=1}^2dx_\alpha\wedge dy_\alpha,
 \label{eq:Betti-form-real-coordinates}
\end{equation}
are well-defined on $\mathcal A|_U$. 

\begin{lemma}
\label{lem:Betti-form-properties}
The forms \eqref{eq:Betti-form-real-coordinates} define a global smooth
real closed $(1,1)$-form on $\mathcal A$. With
$
 \delta w=dw-(dZ)y=dx+Zdy,
$
one has
\begin{equation}
 \omega_{\mathrm B}
 =\sqrt{-1}\sum_{\alpha,\beta=1}^2
 (Y^{-1})_{\alpha\beta}\,
 \delta w_\alpha\wedge\overline{\delta w_\beta}.
 \label{eq:Betti-form-complex-expression}
\end{equation}
In particular, $\omega_{\mathrm B}$ is semipositive and has constant
complex rank $2$.
\end{lemma}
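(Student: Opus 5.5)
The plan has three parts: show that the local forms glue to a global closed form, derive the complex expression \eqref{eq:Betti-form-complex-expression} by direct linear algebra, and read off semipositivity and rank from that expression.

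\emph{Global well-definedness.} On an overlap of two charts $U,U'$, the two flat symplectic bases of $R_1\pi_*\mathbb Z$ differ by a locally constant element of $\operatorname{Sp}_4(\mathbb Z)$. The real coordinates $(x,y)$ are the coordinates of a point of the fiber $H_1(A_t,\mathbb R)/H_1(A_t,\mathbb Z)$ with respect to the chosen basis. Changing the basis therefore changes $(x,y)$ by a constant integral symplectic linear map, up to lattice translations that do not affect differentials. Since $2\sum dx_\alpha\wedge dy_\alpha$ is the standard symplectic form in these coordinates, it is invariant under such maps, so the local forms agree on overlaps. The form is real and smooth by construction. It is closed because its coefficients are constant in the coordinates $(x,y,t)$, and the $d x_\alpha,dy_\alpha$ are exact locally.

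\emph{Complex expression.} From $w=x+Zy$ we get $dw=dx+Z\,dy+(dZ)y$, so $\delta w=dx+Z\,dy$ is a $(1,0)$-form; this holds because $Z$ is holomorphic in $t$ and $y$ is real. Then $\overline{\delta w}=dx+\bar Z\,dy$, and $\delta w-\overline{\delta w}=2\sqrt{-1}\,Y\,dy$, which gives $dy=\tfrac{1}{2\sqrt{-1}}Y^{-1}(\delta w-\overline{\delta w})$ and $dx=\delta w-Z\,dy$. Substituting into the right side of \eqref{eq:Betti-form-complex-expression}, I will expand $\sqrt{-1}\,{}^t\delta w\,Y^{-1}\wedge\overline{\delta w}$ using $\delta w=dx+Z\,dy$. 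The symmetry of $Z$ and $Y^{-1}$ kills the $dx\wedge dx$ and $dy\wedge dy$ contributions. Indeed, the $dy\wedge dy$ term is $\sqrt{-1}\,{}^tdy\,ZY^{-1}\bar Z\wedge dy$, and $ZY^{-1}\bar Z$ is symmetric, so its wedge vanishes. The mixed terms combine via $\bar Z-Z=-2\sqrt{-1}Y$ to give $2\sum dx_\alpha\wedge dy_\alpha$. Since $\delta w_\alpha\wedge\overline{\delta w_\beta}$ is of type $(1,1)$, this identity also shows that $\omega_{\mathrm B}$ has type $(1,1)$.

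\emph{Rank and semipositivity.} The Hermitian matrix of $\omega_{\mathrm B}$ with respect to the forms $\delta w_1,\delta w_2$ is $Y^{-1}$, which is positive definite. The forms $\delta w_1,\delta w_2$ are linearly independent at every point, because their restrictions to a fiber are $dw_1,dw_2$. Hence, completing them by $dt$ to a coframe, the matrix of $\omega_{\mathrm B}$ is $\operatorname{diag}(Y^{-1},0)$. It follows that $\omega_{\mathrm B}$ is semipositive of constant rank two, and that it is positive on the fibers.

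The main obstacle is the bookkeeping in the expansion: fixing the sign convention for $\sqrt{-1}$ and checking that the factor $2$ matches. I will verify it first in dimension one, where $\sqrt{-1}\,y^{-1}\delta w\wedge\overline{\delta w}$ with $\delta w=dx+\tau\,dy$ gives $\sqrt{-1}\,y^{-1}(\bar\tau-\tau)\,dx\wedge dy=2\,dx\wedge dy$. The general case then follows entrywise in the same way.
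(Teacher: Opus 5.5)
Your proposal is correct and follows the paper's proof essentially step for step. Both arguments glue the local forms via invariance of $2\sum dx_\alpha\wedge dy_\alpha$ under integral translations and constant integral symplectic changes of basis. Both expand $\sqrt{-1}\,{}^t\delta w\,Y^{-1}\wedge\overline{\delta w}$, with the $dx\wedge dx$ and $dy\wedge dy$ terms killed by symmetry and the mixed terms collapsing through $\bar Z-Z=-2\sqrt{-1}Y$. Both then read off semipositivity and constant rank two from the pointwise independence of $\delta w_1,\delta w_2$.

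Two points are only lightly justified, and both are minor:
\begin{itemize}
\item You assert that $ZY^{-1}\bar Z$ is symmetric without proof. The paper checks this via $ZY^{-1}\bar Z=XY^{-1}X+Y$ with $Z=X+\sqrt{-1}Y$.
\item The type $(1,0)$ of $\delta w$ does not come from $y$ being real. It comes from $\delta w=dw-(dZ)y$, since $dw$ and $dZ$ are both of type $(1,0)$.
\end{itemize}
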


\begin{proof}
Changing a local lift of $(x,y)$ adds an integral constant vector.
Changing the flat symplectic basis acts by a constant integral symplectic
matrix. Both transformations preserve
$2\sum_\alpha dx_\alpha\wedge dy_\alpha$. The local forms therefore
glue, and their closedness follows from the same formula.

Under a lattice translation $w\mapsto w+m+Zn$, one has $y\mapsto y+n$,
and hence
\[
 d(w+m+Zn)-(dZ)(y+n)=dw-(dZ)y.
\]
Thus $\delta w$ descends under the lattice action. Since $Z$ is holomorphic, each component of
$\delta w=dw-(dZ)y$ is a smooth $(1,0)$-form.
To compute their pairing, write $Z=X+\sqrt{-1}Y$. The matrices $Y^{-1}$
and
\(
 ZY^{-1}\overline Z=XY^{-1}X+Y
\)
are symmetric. The $dx\wedge dx$ and $dy\wedge dy$ terms in
$(\delta w)^{\mathsf T}Y^{-1}\wedge\overline{\delta w}$ consequently
vanish, while the other two terms give
\[
 (dx)^{\mathsf T}Y^{-1}(\overline Z-Z)\wedge dy
 =-2\sqrt{-1}(dx)^{\mathsf T}\wedge dy.
\]
This proves \eqref{eq:Betti-form-complex-expression}.

The map $T^{1,0}\mathcal A\to\mathbb C^2$ given locally by
$v\mapsto\delta w(v)$ restricts to an isomorphism on the vertical
tangent space. It is therefore surjective. Since $Y^{-1}$ is positive
definite, \eqref{eq:Betti-form-complex-expression} proves both
semipositivity and constant complex rank $2$.
\end{proof}

The local form also admits the potential
\begin{equation}
 \omega_{\mathrm B}
 =\sqrt{-1}\,\partial\bar\partial
 \left(2(\operatorname{Im}w)^{\mathsf T}
 Y^{-1}(\operatorname{Im}w)\right).
 \label{eq:Betti-form-potential}
\end{equation}
For example, put $v=\operatorname{Im}w$ and $y=Y^{-1}v$.
Differentiation gives
\[
 \partial y=\frac{1}{2\sqrt{-1}}Y^{-1}\delta w,
 \qquad
 \bar\partial y=-\frac{1}{2\sqrt{-1}}Y^{-1}\overline{\delta w}.
\]
For $F=2v^{\mathsf T}Y^{-1}v$, one has
$\partial F=-2\sqrt{-1}y^{\mathsf T}dw
+\sqrt{-1}y^{\mathsf T}(dZ)y$.
Substituting the displayed derivatives in
$\partial\bar\partial F=-\bar\partial\partial F$ recovers
\eqref{eq:Betti-form-complex-expression}, see \cite[Lemma 2.3]{DimitrovGaoHabegger2021}.

\begin{proposition}
\label{prop:Betti-form-MA}
The polarized abelian scheme \eqref{eq:universal-QM-surface}, equipped
with $\omega_{\mathrm B}$, is a Monge--Amp\`ere fibration. More precisely,
\begin{equation}
 \left.\omega_{\mathrm B}\right|_{\mathcal A_t}
 =\sqrt{-1}\sum_{\alpha,\beta=1}^2
 (Y(t)^{-1})_{\alpha\beta}
 dw_\alpha\wedge d\overline w_\beta>0,
 \quad \omega_{\mathrm B}^{3}=0.
 \label{eq:Betti-MA-equation}
\end{equation}
\end{proposition}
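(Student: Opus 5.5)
The plan is to read off everything from Lemma~\ref{lem:Betti-form-properties}, working in the local period description \eqref{eq:local-period-description} over a small simply connected $U\subset C$. The global statements then follow, because $\omega_{\mathrm B}$ is already known to be a global smooth real closed $(1,1)$-form. Once both claims in \eqref{eq:Betti-MA-equation} are established, $\pi:(\mathcal A,\omega_{\mathrm B})\to C$ is a relative K\"ahler fibration of relative dimension $n=2$ with $\omega_{\mathrm B}^{n+1}=0$, which is exactly the definition of a Monge--Amp\`ere fibration. Here $\pi$ is the smooth proper family \eqref{eq:universal-QM-surface}.

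For the fiber restriction, fix $t\in U$ and restrict \eqref{eq:Betti-form-complex-expression} to $\mathcal A_t$. On the fiber $t$ is constant, so $dZ|_{\mathcal A_t}=0$ and hence $\delta w|_{\mathcal A_t}=dw$. This gives
\[
\omega_{\mathrm B}|_{\mathcal A_t}=\sqrt{-1}\sum_{\alpha,\beta}(Y(t)^{-1})_{\alpha\beta}\,dw_\alpha\wedge d\overline w_\beta .
\]
Since $Y(t)$ is positive definite, so is $Y(t)^{-1}$. Since $dw_1,dw_2$ form a coframe of $T^{1,0}\mathcal A_t$, this restriction is a positive definite $(1,1)$-form, i.e.\ a flat K\"ahler form on $A_t$.

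For the Monge--Amp\`ere equation, I would argue pointwise using \eqref{eq:Betti-form-complex-expression}. At every point $\omega_{\mathrm B}$ is a Hermitian form built from only the two $(1,0)$-forms $\delta w_1,\delta w_2$. Hence $\omega_{\mathrm B}^3$ is a linear combination of terms containing $\delta w_{\alpha_1}\wedge\delta w_{\alpha_2}\wedge\delta w_{\alpha_3}$ with indices in $\{1,2\}$. Each such wedge product repeats a $1$-form and therefore vanishes, so $\omega_{\mathrm B}^3=0$. Equivalently, the constant complex rank $2$ from Lemma~\ref{lem:Betti-form-properties} forces $\omega_{\mathrm B}^3=0$ on a complex threefold.

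There is no real obstacle in this argument. The only point needing care is that $\delta w$ is a genuine smooth $(1,0)$-form on $\mathcal A|_U$ that restricts to $dw$ on fibers, and Lemma~\ref{lem:Betti-form-properties} already provides this.
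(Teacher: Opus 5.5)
Your proposal is correct and matches the paper's proof: restrict the expression from Lemma~\ref{lem:Betti-form-properties} to a fiber, where $dZ=0$ and $\delta w=dw$, and use positive definiteness of $Y(t)$; then deduce $\omega_{\mathrm B}^3=0$ from the rank-two structure. Your explicit wedge-product argument, in which every term of the cube contains three $(1,0)$-factors taken from the span of $\delta w_1,\delta w_2$, is simply a spelled-out version of the paper's appeal to constant complex rank two.
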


\begin{proof}
On a fiber, $dZ$ vanishes and $\delta w=dw$, so the first assertion
follows from the positivity of $Y(t)$. The second follows from the
constant-rank statement in Lemma~\ref{lem:Betti-form-properties}.
Together with closedness, these are exactly the required properties.
\end{proof}

The kernel of $\omega_{\mathrm B}$ is the Betti horizontal distribution.
Locally its leaves are given by holding $(x,y)$ constant in
\eqref{eq:Betti-coordinates}. They are holomorphic graphs because $Z$
is holomorphic. The form is thus degenerate in precisely one complex
direction on $\mathcal A$, although it is positive in every nonzero
vertical direction.

\subsection{The Kodaira--Spencer map}
\label{subsec:Shimura-KS-injective}

We verify infinitesimal variation directly on the uniformization
\eqref{eq:analytic-universal-QM-family}. This also shows that the
$\omega_{\mathrm B}$-Kodaira--Spencer representatives are harmonic.

\begin{proposition}
\label{prop:Shimura-KS-injective}
For every $t\in C$, the classical Kodaira--Spencer map
\[
 \rho_t:T_tC\longrightarrow H^1(\mathcal A_t,T_{\mathcal A_t})
\]
is injective.
\end{proposition}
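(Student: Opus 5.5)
The plan is to work on the uniformization \eqref{eq:analytic-universal-QM-family}. Since $\mathbb H\to C^{\mathrm{an}}$ is an unramified covering, it suffices to prove injectivity for the family $\mathcal A_{\mathbb H}\to\mathbb H$ at an arbitrary $\tau\in\mathbb H$. Here the fiber $A_\tau=\mathbb C^2/\Lambda_\tau$ has $T_{A_\tau}\simeq\mathcal O\otimes\mathbb C^2$. Hence
\[
H^1(A_\tau,T_{A_\tau})\simeq \mathbb C^2\otimes H^{0,1}(A_\tau),
\]
and every class has a unique representative with constant coefficients $\sum_{a,b} c_{ab}\,d\bar z_b\otimes\partial_{z_a}$, which is the harmonic one for the flat metric. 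So the task reduces to computing the Kodaira--Spencer class of $\partial_\tau$ as a constant $2\times2$ matrix and showing it is nonzero.

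To compute the class, I will trivialize the family differentiably by lattice-adapted real coordinates. Choose a $\mathbb Z$-basis $\beta_1,\dots,\beta_4$ of $\mathcal O_D$ and write each point as $z=\sum_k s_k\,\sigma(\beta_k)v_\tau$ with $s\in\mathbb R^4/\mathbb Z^4$. Equivalently, use $z=\sigma(\gamma)v_\tau$ with $\gamma\in\mathcal O_D\otimes\mathbb R\simeq M_2(\mathbb R)$. The lift $V=\partial_\tau$ with $s$ fixed is smooth and invariant, and it differs from a local holomorphic lift by a vertical field. Its $\bar\partial$ along the fiber therefore represents $\rho_\tau(\partial_\tau)$; this is the standard period-variation recipe, and the same reasoning as for $\omega_{\mathrm B}$-horizontal lifts applies, since leaves with constant real lattice coordinates are exactly the Betti horizontal leaves.

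Concretely, for $z=\sigma(\gamma)v_\tau$ with $\gamma$ a real matrix, $\partial_\tau z$ at fixed $\gamma$ is $\sigma(\gamma)e_1$, where $e_1=(1,0)^{\mathsf T}$. I then express $\sigma(\gamma)e_1$ in terms of $z$ and $\bar z$. Since $z=\gamma v_\tau$ and $\bar z=\gamma\bar v_\tau$, we have $\gamma\,(v_\tau\ \bar v_\tau)=(z\ \bar z)$, and $e_1=(v_\tau-\bar v_\tau)/(\tau-\bar\tau)$. This gives
\[
\sigma(\gamma)e_1=\frac{z-\bar z}{2\sqrt{-1}\operatorname{Im}\tau},
\qquad
\kappa=\bar\partial\Bigl(\frac{z-\bar z}{2\sqrt{-1}\operatorname{Im}\tau}\Bigr)
=-\frac{1}{2\sqrt{-1}\operatorname{Im}\tau}\sum_{a}d\bar z_a\otimes\partial_{z_a},
\]
up to sign conventions. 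This is a constant, hence harmonic, and nonzero $T$-valued $(0,1)$-form: the identity matrix times a nonzero scalar. Since constant forms inject into $H^1(A_\tau,T_{A_\tau})$ by Hodge theory on the flat torus, $\rho_\tau(\partial_\tau)\neq0$. As $T_\tau\mathbb H$ is one-dimensional, $\rho_\tau$ is injective, and this descends to every $t\in C$.

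The main obstacle I expect is the bookkeeping in the middle step: checking that the fixed-real-coordinate lift really computes the Kodaira--Spencer class. This requires comparing it with holomorphic local trivializations, noting that the difference of two lifts is vertical and that $\bar\partial$ of a global vertical smooth field is exact. It also requires checking that $\sigma(\gamma)e_1$ is written correctly in terms of $(z,\bar z)$, so that no accidental cancellation makes the $\bar z$-coefficient vanish. As a consistency check, the result should match Yuan's formula \cite[Theorem~4.2]{Yuan2024}: the nonzero class is the image of $\partial_\tau$ under the period map into $\mathfrak H_2$, whose differential is injective because the period map $\tau\mapsto$ (period matrix of $\Lambda_\tau$) is a holomorphic embedding of $\mathbb H$.
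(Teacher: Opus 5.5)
Your proposal is correct and is essentially the paper's proof: your real coordinates $\gamma$ play the role of the paper's matrix $M$, and the fixed-$\gamma$ lift is the Betti horizontal lift $V=\partial/\partial\tau+\sum_\alpha(\operatorname{Im}z_\alpha/s)\,\partial/\partial z_\alpha$. Its fiberwise $\bar\partial$ is the nonzero constant, hence harmonic, tensor $-\frac{1}{2\sqrt{-1}s}\sum_\alpha d\bar z_\alpha\otimes\partial/\partial z_\alpha$, and one-dimensionality of the base together with local biholomorphy of $\mathbb H\to C$ finishes the argument; the paper also writes out the check that $V$ is invariant under the translations $T_\beta$, which in your formulation is automatic because a translation shifts $\gamma$ by the constant $\beta$.
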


\begin{proof}
Write $\tau=r+\mathrm i s$, with $s>0$.
For each fixed $\tau$, every $z\in\mathbb C^2$ can be
written uniquely as
\[
z=M
\begin{pmatrix}\tau\\1\end{pmatrix},
\qquad
M=
\begin{pmatrix}
a_1&b_1\\
a_2&b_2
\end{pmatrix}
\in M_2(\mathbb R).
\]
Indeed, the equations $z_\alpha=a_\alpha\tau+b_\alpha$
give
\[
a_\alpha=\frac{\operatorname{Im}z_\alpha}{s},
\qquad
b_\alpha=\operatorname{Re}z_\alpha-r a_\alpha,
\qquad \alpha=1,2.
\]
Replacing $z$ by
$z+\sigma(\beta)(\tau,1)^{\mathsf T}$, with
$\beta\in\mathcal O_D$, replaces $M$ by $M+\sigma(\beta)$.
Thus the class
\(
[M]\in M_2(\mathbb R)/\sigma(\mathcal O_D)
\)
gives the Betti coordinates of the corresponding point
of the fiber.

Keeping these coordinates fixed means keeping $M$ fixed.
The resulting local section is represented by
\(
z_\alpha(\tau)=a_\alpha\tau+b_\alpha,
\)
and hence
\[
\frac{d z_\alpha(\tau)}{d\tau}
=a_\alpha
=\frac{\operatorname{Im}z_\alpha}{s}.
\]
Its tangent vector is the Betti horizontal lift of
$\partial/\partial\tau$:
\begin{equation}
V=\frac{\partial}{\partial\tau}
+\sum_{\alpha=1}^2
\frac{\operatorname{Im}z_\alpha}{s}
\frac{\partial}{\partial z_\alpha}.
\label{eq:QM-Betti-horizontal-lift}
\end{equation}

We check that this formula is compatible with the lattice
identifications. Write $\sigma(\beta)=(p\ \ q)$, where
$p,q\in\mathbb R^2$ are its columns. The corresponding
translation is
\(
T_\beta(\tau,z)=(\tau,z+p\tau+q).
\)
Writing $\tau'=\tau$ and $z'=z+p\tau+q$, we have
\[
\frac{\operatorname{Im}z'_\alpha}{s}
=\frac{\operatorname{Im}z_\alpha}{s}+p_\alpha.
\]
The chain rule also gives
\[
dT_\beta\left(\frac{\partial}{\partial\tau}\right)
=
\frac{\partial}{\partial\tau'}
+\sum_{\alpha=1}^2p_\alpha
\frac{\partial}{\partial z'_\alpha},
\qquad
dT_\beta\left(\frac{\partial}{\partial z_\alpha}\right)
=
\frac{\partial}{\partial z'_\alpha}.
\]
Consequently,
\[
dT_\beta(V)
=
\frac{\partial}{\partial\tau'}
+\sum_{\alpha=1}^2
\frac{\operatorname{Im}z'_\alpha}{s}
\frac{\partial}{\partial z'_\alpha},
\]
which is the same formula at the translated point.
Thus $V$ descends to the family over $\mathbb H$.

Taking $\bar\partial$ and restricting to the fiber gives
\begin{equation}
 \kappa_{\partial/\partial\tau}
 =\left.\bar\partial V\right|_{\mathcal A_\tau}
 =-\frac{1}{2\sqrt{-1}s}
   \sum_{\alpha=1}^2
   d\overline z_\alpha\otimes\frac{\partial}{\partial z_\alpha}.
 \label{eq:QM-harmonic-KS-tensor}
\end{equation}
The restriction of $\omega_{\mathrm B}$ to $\mathcal A_\tau$ is a
translation-invariant flat K\"ahler metric. The tensor in
\eqref{eq:QM-harmonic-KS-tensor} is translation invariant and hence
parallel. In particular, it is $\bar\partial$-harmonic. It is nonzero
because $s>0$, so its Dolbeault cohomology class is nonzero by Hodge
theory. The covering map $\mathbb H\to C$ is locally biholomorphic;
therefore $\rho_t(v)\ne0$ for every nonzero $v\in T_tC$. Since
$\dim_{\mathbb C}T_tC=1$, this proves injectivity.
\end{proof}
\begin{remark}
\label{rem:comparison-Yuan}
The injectivity of the Kodaira--Spencer map can also be
deduced from \cite[Theorem~4.2]{Yuan2024}.
Work on the pullback family over $\mathbb H$, and write
\(
\sigma(\mu)=
\begin{pmatrix}
a&b\\
c&d
\end{pmatrix}.
\)
For each $\tau\in\mathbb H$, consider the cotangent
Kodaira--Spencer map evaluated at $\partial/\partial\tau$,
\(
H^0(A_\tau,\Omega^1_{A_\tau})
\to H^1(A_\tau,\mathcal O_{A_\tau}).
\)
The principal polarization identifies the target with
$\operatorname{Lie}(A_\tau)$. Denote the resulting map by
$\Phi_\tau$. In Yuan's convention, its matrix in the frames
$(dz_1,dz_2)$ and
$(\partial/\partial z_1,\partial/\partial z_2)$ is
\[
\Phi_\tau
=\frac{1}{2\pi\mathrm{i}}
\begin{pmatrix}
b&-a\\
d&-c
\end{pmatrix}.
\]
Since $ad-bc=q_D(\mu)=d_D$, we obtain
\(
\det\Phi_\tau
=\frac{d_D}{(2\pi\mathrm{i})^2}\ne0.
\)

If the ordinary Kodaira--Spencer class
$\rho_\tau(\partial/\partial\tau)$ were zero, the
cotangent Kodaira--Spencer map in this direction would
also vanish, contradicting the invertibility of
$\Phi_\tau$. Thus this class is nonzero at every point.
Since the base is one-dimensional and
$\mathbb H\to C$ is locally biholomorphic, the
Kodaira--Spencer map over $C$ is everywhere injective.
This gives an independent verification of the conclusion
obtained from the harmonic representative in
\eqref{eq:QM-harmonic-KS-tensor}.
\end{remark}

\subsection{The compact examples and their curvature}
\label{subsec:compact-product-final}

\begin{proof}[Proof of Theorem~\ref{thm:compact-Shimura-MA-example}]
Take the family \eqref{eq:universal-QM-surface} over a sufficiently
small-level compact Shimura curve. Its total space is a smooth projective
threefold. Proposition~\ref{prop:Betti-form-MA} supplies the required
Monge--Amp\`ere form, and
Proposition~\ref{prop:Shimura-KS-injective} proves injectivity of the
Kodaira--Spencer map at every point. In particular the family is
non-isotrivial.
\end{proof}

\begin{proof}[Proof of Theorem~\ref{thm:compact-product-counterexample}]
Take two copies $\pi_i:(\mathcal A_i,\omega_{\mathrm B,i})\to C_i$
of this family and put
\[
 \mathcal X=\mathcal A_1\times\mathcal A_2,
 \quad B=C_1\times C_2,
 \quad p=\pi_1\times\pi_2,
 \quad
 \omega=q_1^*\omega_{\mathrm B,1}+q_2^*\omega_{\mathrm B,2}.
\]
Then $\mathcal X$ is a smooth projective sixfold and $B$ is a smooth
projective surface. The fibers are connected abelian fourfolds. By
Propositions~\ref{prop:product-stability-MA} and
\ref{prop:product-KS-injective}, the form $\omega$ is relatively
K\"ahler, satisfies $\omega^5=0$, and the Kodaira--Spencer map of $p$
is injective everywhere.

Let $V_i=\int_{\mathcal A_{i,t_i}}\omega_{\mathrm B,i}^{2}/2$.
These volumes are positive constants. By
Proposition~\ref{prop:product-WP-splitting}, the generalized
Weil--Petersson metric is
\[
 G_{\mathrm{WP}}
 =V_2\operatorname{pr}_1^*G_{\mathrm{WP},1}
  +V_1\operatorname{pr}_2^*G_{\mathrm{WP},2}.
\]
Consequently, at every $(t_1,t_2)\in B$, any nonzero
$\xi_1\in T_{t_1}C_1$ and $\eta_2\in T_{t_2}C_2$ give
\[
 R^{\mathrm{WP}}\bigl((\xi_1,0),\overline{(\xi_1,0)},
              (0,\eta_2),\overline{(0,\eta_2)}\bigr)=0.
\]
The metric nevertheless has nonpositive holomorphic bisectional
curvature and strictly negative holomorphic sectional curvature. Indeed,
Theorem~\ref{thm:WW-curvature}, with relative dimension $4$ and
fiber volume $V_1V_2$, gives
\[
 R^{\mathrm{WP}}(\zeta,\overline\zeta,\eta,\overline\eta)
 \le -\frac{1}{2V_1V_2}
       |\langle\zeta,\eta\rangle_{\mathrm{WP}}|^2.
\]
Setting $\eta=\zeta\ne0$ yields
$\HSC(G_{\mathrm{WP}})(\zeta)\le-1/(2V_1V_2)<0$.
Thus the zero bisectional curvature occurs between nonzero, orthogonal
directions whose Kodaira--Spencer classes are both nonzero.
\end{proof}

\bibliographystyle{amsalpha}
\bibliography{bisectional_fibrations}

@article{TY11,
  author = {To, Wing-Keung and Yeung, Sai-Kee},
  title = {{K\"ahler} metrics of negative holomorphic bisectional curvature on {Kodaira} surfaces},
  journal = {Bulletin of the London Mathematical Society},
  volume = {43},
  number = {3},
  year = {2011},
  pages = {507--512},
  doi = {10.1112/blms/bdq117}
}

@article{Wan26,
  author = {Wan, Xueyuan},
  title = {{K\"ahler} metrics of the negative holomorphic (bi)sectional curvature on a compact relative {K\"ahler} fibration},
  journal = {Bulletin of the London Mathematical Society},
  volume = {58},
  number = {2},
  year = {2026},
  pages = {e70291},
  doi = {10.1112/blms.70291},
  eprint = {2409.14650},
  archivePrefix = {arXiv},
  primaryClass = {math.DG}
}

@article{WanWang2023,
  author = {Wan, Xueyuan and Wang, Xu},
  title = {Curvature of the base manifold of a {Monge--Amp\`ere} fibration and its existence},
  journal = {Mathematische Annalen},
  volume = {387},
  number = {1-2},
  year = {2023},
  pages = {353--387},
  doi = {10.1007/s00208-022-02475-9},
  eprint = {1908.03955},
  archivePrefix = {arXiv},
  primaryClass = {math.AG}
}

@article{Yuan2024,
  author = {Yuan, Xinyi},
  title = {Explicit {Kodaira--Spencer} maps over {Shimura} curves},
  journal = {Acta Mathematica Sinica, Chinese Series},
  volume = {67},
  number = {2},
  year = {2024},
  pages = {227--249},
  doi = {10.12386/A20220154},
  eprint = {2205.11334},
  archivePrefix = {arXiv},
  primaryClass = {math.NT},
  note = {English version: arXiv:2205.11334v2}
}

@article{DimitrovGaoHabegger2021,
  author = {Dimitrov, Vesselin and Gao, Ziyang and Habegger, Philipp},
  title = {Uniformity in {Mordell--Lang} for curves},
  journal = {Annals of Mathematics},
  series = {2},
  volume = {194},
  number = {1},
  year = {2021},
  pages = {237--298},
  doi = {10.4007/annals.2021.194.1.4}
}

@article{Jab09,
  author = {Jabbusch, Kelly},
  title = {Positivity of cotangent bundles},
  journal = {Michigan Mathematical Journal},
  volume = {58},
  number = {3},
  year = {2009},
  pages = {723--744},
  doi = {10.1307/mmj/1260475697},
  eprint = {0803.0622},
  archivePrefix = {arXiv},
  primaryClass = {math.AG}
}

@misc{Dem12,
  author = {Demailly, Jean-Pierre},
  title = {Complex Analytic and Differential Geometry},
  year = {2012},
  note = {Online book, version of June 21, 2012},
  url = {https://www-fourier.univ-grenoble-alpes.fr/~demailly/manuscripts/agbook.pdf}
}

@misc{Stacks,
  author = {{The Stacks Project Authors}},
  title = {The {Stacks Project}},
  year = {2026},
  howpublished = {\url{https://stacks.math.columbia.edu}},
}

@article{Par91,
  author = {Pardini, Rita},
  title = {Abelian covers of algebraic varieties},
  journal = {Journal f\"ur die reine und angewandte Mathematik},
  volume = {417},
  year = {1991},
  pages = {191--213},
  doi = {10.1515/crll.1991.417.191}
}

@article{Wolpert1986,
  author = {Wolpert, Scott A.},
  title = {Chern forms and the {Riemann} tensor for the moduli space of curves},
  journal = {Inventiones Mathematicae},
  volume = {85},
  number = {1},
  year = {1986},
  pages = {119--145},
  doi = {10.1007/BF01388794}
}

@article{Che89,
  author  = {Cheung, C.-K.},
  title   = {Hermitian metrics of negative holomorphic sectional curvature on some hyperbolic manifolds},
  journal = {Mathematische Zeitschrift},
  volume  = {201},
  year    = {1989},
  pages   = {105--119},
  doi     = {10.1007/BF01161998}
}

@article{Tsa89,
  author  = {Tsai, I.-H.},
  title   = {Negatively curved metrics on {Kodaira} surfaces},
  journal = {Mathematische Annalen},
  volume  = {285},
  year    = {1989},
  pages   = {369--379},
  doi     = {10.1007/BF01455062}
}

@article {MR436056,
    AUTHOR = {Yang, Paul C.},
     TITLE = {K\"ahler metrics on fibered manifolds},
   JOURNAL = {Proc. Amer. Math. Soc.},
  FJOURNAL = {Proceedings of the American Mathematical Society},
    VOLUME = {63},
      YEAR = {1977},
    NUMBER = {1},
     PAGES = {131--133},
      ISSN = {0002-9939,1088-6826},
   MRCLASS = {53C55 (32J15)},
  MRNUMBER = {436056},
MRREVIEWER = {A.\ L.\ Vitter, III},
       DOI = {10.2307/2041081},
       URL = {https://doi-org.vnlib.kias.re.kr/10.2307/2041081},
}

@article {Wells,
    AUTHOR = {Wells, Jr., R. O.},
     TITLE = {Comparison of de {R}ham and {D}olbeault cohomology for proper
              surjective mappings},
   JOURNAL = {Pacific J. Math.},
  FJOURNAL = {Pacific Journal of Mathematics},
    VOLUME = {53},
      YEAR = {1974},
     PAGES = {281--300},
      ISSN = {0030-8730,1945-5844},
   MRCLASS = {32J25},
  MRNUMBER = {367307},
MRREVIEWER = {Daniel\ M.\ Burns, Jr.},
       URL = {http://projecteuclid-org.vnlib.kias.re.kr/euclid.pjm/1102911803},
}

@book {MR1288523,
    AUTHOR = {Griffiths, Phillip and Harris, Joseph},
     TITLE = {Principles of algebraic geometry},
    SERIES = {Wiley Classics Library},
      NOTE = {Reprint of the 1978 original},
 PUBLISHER = {John Wiley \& Sons, Inc., New York},
      YEAR = {1994},
     PAGES = {xiv+813},
      ISBN = {0-471-05059-8},
   MRCLASS = {14-01},
  MRNUMBER = {1288523},
       DOI = {10.1002/9781118032527},
       URL = {https://doi-org.vnlib.kias.re.kr/10.1002/9781118032527},
}

@incollection{DonagiWitten2015,
  author    = {Donagi, Ron and Witten, Edward},
  title     = {Supermoduli space is not projected},
  booktitle = {String-Math 2012},
  series    = {Proceedings of Symposia in Pure Mathematics},
  volume    = {90},
  pages     = {19--71},
  publisher = {American Mathematical Society},
  address   = {Providence, RI},
  year      = {2015},
  doi       = {10.1090/pspum/090/01525},
  url       = {https://arxiv.org/abs/1304.7798}
}

@book{BertinRomagny2011,
  author    = {Bertin, Jos{\'e} and Romagny, Matthieu},
  title     = {Champs de {Hurwitz}},
  series    = {M{\'e}moires de la Soci{\'e}t{\'e}
               Math{\'e}matique de France, Nouvelle s{\'e}rie},
  number    = {125--126},
  publisher = {Soci{\'e}t{\'e} Math{\'e}matique de France},
  year      = {2011},
  doi       = {10.24033/msmf.437},
  url       = {https://www.numdam.org/item/MSMF_2011_2_125-126__1_0/}
}

@article{LemosTorzewski2023,
  author  = {Lemos, Pedro and Torzewski, Alex},
  title   = {Bounds on the number of rational points
             of curves in families},
  journal = {Bulletin of the London Mathematical Society},
  volume  = {55},
  number  = {2},
  pages   = {1019--1032},
  year    = {2023},
  doi     = {10.1112/blms.12774}
}

@article{BedoyaGoncalves2010,
  author  = {Bedoya, N. A. V. and Gon{\c{c}}alves, D. L.},
  title   = {Decomposability problem on branched coverings},
  journal = {Sbornik: Mathematics},
  volume  = {201},
  number  = {12},
  year    = {2010},
  pages   = {1715--1730},
}

@book{EsnaultViehweg1992,
  author    = {Esnault, H{\'e}l{\`e}ne and Viehweg, Eckart},
  title     = {Lectures on Vanishing Theorems},
  series    = {DMV Seminar},
  volume    = {20},
  publisher = {Birkh{\"a}user},
  address   = {Basel},
  year      = {1992},
  url       = {https://page.mi.fu-berlin.de/esnault/books/esvibuch.pdf}
}

@book{Voight2021,
  author    = {Voight, John},
  title     = {Quaternion Algebras},
  series    = {Graduate Texts in Mathematics},
  volume    = {288},
  publisher = {Springer},
  address   = {Cham},
  year      = {2021},
  doi       = {10.1007/978-3-030-56694-4},
  url       = {https://doi.org/10.1007/978-3-030-56694-4}
}

@phdthesis{Lan2008,
  author = {Lan, Kai-Wen},
  title  = {Arithmetic Compactifications of {PEL}-Type
            {Shimura} Varieties},
  school = {Harvard University},
  year   = {2008},
  url    = {https://www.kwlan.org/articles/cpt-PEL-type-thesis-single.pdf}
}

@article{Buzzard1997,
  author  = {Buzzard, Kevin},
  title   = {Integral models of certain {Shimura} curves},
  journal = {Duke Mathematical Journal},
  volume  = {87},
  number  = {3},
  year    = {1997},
  pages   = {591--612},
  doi     = {10.1215/S0012-7094-97-08719-6}
}
\end{document}